\numberwithin{equation}{section}
\newcommand{\deriv}{{\rm d}}
\newcommand{\interior}{\operatorname{int}}
\newcommand{\direct}{\operatorname{direct}}
\newcommand{\indirect}{\operatorname{indirect}}
\newcommand{\comp}{\operatorname{comp}}
\newcommand{\closure}{\operatorname{cl}}
\newcommand{\erf}{\operatorname{erf}}
\newcommand{\B}{\mathcal{B}}
\newcommand{\AV}{\operatorname{AV}}
\newcommand{\CV}{\operatorname{CV}}
\newcommand{\CP}{\operatorname{C}}
\newcommand*{\defeq}{\mathrel{\vcenter{\baselineskip0.5ex \lineskiplimit0pt
                     \hbox{\scriptsize.}\hbox{\scriptsize.}}}%
                     =}
\newcommand{\eqdef}{=\mathrel{\vcenter{\baselineskip0.5ex \lineskiplimit0pt
                     \hbox{\scriptsize.}\hbox{\scriptsize.}}}}
\def\swappedhead#1#2#3{%
  \thmnumber{\@upn{\the\thm@headfont #2\@ifnotempty{#1}{.~}}}%
  \thmname{#1}%
  \thmnote{ {\the\thm@notefont(#3)}}}
 \newtheoremstyle{changebreak}
   {9pt}
   {9pt}
   {\itshape}
   {}
   {\bfseries}
   {.}
   {\newline}
   {}
 \newtheoremstyle{defnbreak}
   {9pt}
   {9pt}
   {\normalfont}
   {}
   {\bfseries}
   {.}
   {\newline}
   {}
 \newtheoremstyle{claimstyle}%
   {}
   {}
   {\normalfont}
   {}
   {\itshape}
   {.}
   { }
   {\thmnote{#3}}
\newenvironment{subproof}{\begin{proof}}{%
               \end{proof}}
\theoremstyle{claimstyle}
\newtheorem*{varclaim}{}
\newenvironment{claim}{\begin{varclaim}[Claim]}{\end{varclaim}}
\newenvironment{remark}{\begin{varclaim}[Remark]}{\end{varclaim}}
\theoremstyle{changebreak}
\newtheorem{thm}{Theorem}[section]%
\newtheorem{lem}[thm]{Lemma}%
\newtheorem{cor}[thm]{Corollary}%
\newtheorem{corollary}[thm]{Corollary}
\newtheorem{prop}[thm]{Proposition}%
\newtheorem{obs}[thm]{Observation}%
\newtheorem{proposition}[thm]{Proposition}
\newtheorem{theorem}[thm]{Theorem}
\theoremstyle{defnbreak}
\newtheorem{rmk}[thm]{Remark}%
\newtheorem{question}[thm]{Question}%
\newtheorem{defn}[thm]{Definition}%
\newcommand{\N}{\mathbb{N}}
\newcommand{\Z}{\mathbb{Z}}
\newcommand{\R}{\mathbb{R}}
\newcommand{\C}{\ensuremath{\mathbb{C}}}
\newcommand{\Ch}{\hat{\mathbb{C}}}
\newcommand{\D}{\mathbb{D}}
\newcommand{\eps}{\ensuremath{\varepsilon}}
\newcommand{\im}{\operatorname{Im}}
\newcommand{\dist}{\operatorname{dist}}
\newcommand{\diam}{\operatorname{diam}}
\renewcommand{\theta}{\vartheta}
\renewcommand{\phi}{\varphi}
\begin{document}
\title[Connected preimages of simply-connected domains]{On connected preimages of simply-connected domains under entire functions}

\author{Lasse Rempe-Gillen} 
\address{Dept. of Mathematical Sciences \\
	 University of Liverpool \\
   Liverpool L69 7ZL\\
   UK \\ ORCiD: 0000-0001-8032-8580}
\email{l.rempe@liverpool.ac.uk}
\author{Dave Sixsmith}
\address{Dept. of Mathematical Sciences \\
	 University of Liverpool \\
   Liverpool L69 7ZL\\
   UK \\ ORCiD: 0000-0002-3543-6969}
\email{djs@liverpool.ac.uk}
\subjclass[2010]{Primary 30D20; Secondary 30D05, 37F10, 30D30.}
\thanks{The first author was partially supported by a Philip Leverhulme Prize.}

%
%
%
%
\begin{abstract}
  Let $f$ be a transcendental entire function, and let 
     $U,V\subset\C$ be disjoint simply-connected domains. Must one of
    $f^{-1}(U)$ and  $f^{-1}(V)$ be disconnected?

  In  1970, Baker \cite{baker1970} implicitly gave a positive answer to this
     question, in order to prove that a transcendental entire function cannot have two disjoint completely invariant domains.
(A domain $U\subset \C$ 
   is completely invariant under $f$ 
   if $f^{-1}(U)=U$.) 

  It was recently observed by Julien Duval that there is a flaw in Baker's argument
    (which has also been used in  later generalisations and extensions of Baker's result). We show that the answer to the above question is 
    negative; so this flaw cannot be repaired. Indeed, for the function $f(z)= e^z+z$, there is a collection of 
    \emph{infinitely many} pairwise disjoint simply-connected domains, each with connected preimage.
    We also answer a long-standing question of 
  Eremenko
  by giving an example of a transcendental meromorphic function, with infinitely many poles, which has the same property.

  Furthermore, we show that there exists a function  $f$ with the above properties
   such that additionally the set of singular values $S(f)$ is bounded; in other words, $f$ belongs to the 
   \emph{Eremenko-Lyubich class}. On the other hand, if $S(f)$ is finite (or if certain additional hypotheses are imposed),
    many of the original results do hold.
 
For the convenience of the research community, we also include a description of the error
in the proof of \cite{baker1970}, and a summary of other papers that are affected.
\end{abstract}
\maketitle
%
%
%
%
\section{Introduction}

Almost half a century ago, Baker \cite{baker1970} proved that a transcendental entire function cannot have
  two disjoint completely invariant domains; in particular, the Fatou set of such a function has either one or 
  infinitely many connected components. (Since we do not focus on dynamics in this paper, we refer to
  \cite{walteriteration} for background and definitions.)
However, in 2016 Julien Duval observed
that there is a flaw in Baker's proof. It follows that the question of whether a transcendental 
entire function can have two disjoint completely invariant domains remains open.
The same flaw is also found in several subsequent proofs, which
aimed to sharpen or  generalise Baker's original result.

Baker's proof in \cite{baker1970} is topological, rather than dynamical, and (if correct) would 
give a positive answer to the following question. 
\begin{question}[Connected preimages of simply-connected domains]
\label{ques:main}
If $f$ is a transcendental entire function, and $G_1$ and $G_2$ are disjoint simply-connected domains,
then is it true that at least one of $f^{-1}(G_1)$ and $f^{-1}(G_2)$ is disconnected?
\end{question}

The main aim of this article is to give a negative answer to Question~\ref{ques:main}.

\begin{thm}[Connected preimages]
\label{thm:easyexample}
   Let $f(z)=e^z+z$. Then there is an infinite sequence $(U_j)_{j=1}^{\infty}$ of 
     pairwise disjoint simply-connected domains such that $f^{-1}(U_j)$ is connected for all $j$. 
\end{thm}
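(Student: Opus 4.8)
The plan is to exploit the branched‑covering structure of $f(z)=e^z+z$ and to construct the $U_j$ by hand, so that connectedness of $f^{-1}(U_j)$ is forced by the way the lifts over $U_j$ are glued at the critical values, while disjointness is arranged by spreading the $U_j$ out along the (left‑pointing) critical rays.

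First I would set up the combinatorics of $f$. From $f'(z)=e^z+1$ the critical points are $z=i(2k+1)\pi$, with critical values $v_k=-1+i(2k+1)\pi$; since $|e^z+z|\to\infty$ along every path to $\infty$, $f$ has no finite asymptotic values, so $S(f)=\{v_k:k\in\Z\}$. A properness argument, together with the fact that each strip $S_k:=\{(2k-1)\pi<\im z<(2k+1)\pi\}$ contains no critical point, shows that $f$ maps $S_k$ biholomorphically onto $W_k:=\C\setminus(\rho_k\cup\rho_{k-1})$, where $\rho_k:=\{\re w\le-1,\ \im w=(2k+1)\pi\}$ is the horizontal ray with endpoint $v_k$. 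Writing $\phi_k:=(f|_{S_k})^{-1}$, for any domain $U$ one has $f^{-1}(U)\cap S_k=\phi_k(U\cap W_k)$, and two lifts in neighbouring strips $S_k,S_{k+1}$ can only be joined inside $f^{-1}(U)$ across the line $\{\im z=(2k+1)\pi\}$, which $f$ maps two‑to‑one onto $\rho_k$ (folding at $v_k$, with opposite local orientations on the two sides of the fold). In particular $U\cap\rho_k\ne\emptyset$ for every $k$ is necessary for $f^{-1}(U)$ to be connected.

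The main step is the construction of the $U_j$. Guided by the above, each $U_j$ is to be a simply‑connected open set that meets every ray $\rho_k$ and that, near each $v_k$, ``reaches around the endpoint'': it contains an arc joining a point of $U_j$ just above $\rho_k$ to a point just below it through $\{\re w>-1\}$ — this is exactly the extra linkage that a naive choice (say a thin vertical strip) fails to provide, so that the lifts $\phi_k(U_j\cap W_k)$ together with the crossings over all the $\rho_k$ fuse, over all $k\in\Z$ simultaneously, into a single connected set with no stray component. The tension is that reaching around the $v_k$ pulls the sets toward the common accumulation locus of the critical values while disjointness pushes them apart; I would resolve this by letting $U_j$ touch $\rho_k$ at a depth depending on $j$ and routing the ``arches'' of different $U_j$ past one another, so that the $U_j$ are pairwise disjoint yet each still engages every branch point. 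Then simple connectedness of $U_j$ is checked by inspection (each $U_j$ is built from convex and slit‑disk pieces glued along connected sets), and pairwise disjointness is built into the construction.

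With the $U_j$ fixed, the heart of the argument is to verify that $f^{-1}(U_j)$ is connected, which I would do via the graph on the pieces $\phi_k(C)$, $C$ a component of $U_j\cap W_k$: determine which pieces are joined by each of the two crossings over each $\rho_k$ (using the opposite orientations at the two sides of the fold), and show the resulting graph is connected, with the arches around the $v_k$ supplying precisely the edges that would otherwise be missing; one must also confirm that $f^{-1}(U_j)$ consists of nothing beyond these pieces and their crossings. I expect this bookkeeping — carried out uniformly in $k$ and compatibly with disjointness of the whole family — to be the main obstacle; the analytic input ($f$ univalent on each $S_k$, the local normal form of $f$ at each $v_k$) is elementary and the rest is topology. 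It may streamline the computation to pass to the cylinder $\C/2\pi i\Z\cong\C^*$, on which $f$ descends via $\exp\circ f=\tilde f\circ\exp$ to $\tilde f(w)=we^w$, collapsing the quasi‑symmetry $f(z+2\pi i)=f(z)+2\pi i$; but the disjoint family itself still has to be built upstairs.
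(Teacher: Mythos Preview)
Your covering-theoretic setup for $f$ is correct, but there is a decisive gap in the construction. If a simply-connected domain $U$ contains no singular value of $f$ (and $f$ has no finite asymptotic values), then $f\colon f^{-1}(U)\to U$ is a covering over a simply-connected base, hence trivial, and $f^{-1}(U)$ is a disjoint union of infinitely many homeomorphic copies of $U$. Your arches pass through $\{\re w>-1\}$ and so avoid every $v_k$, and meeting $\rho_k$ elsewhere also avoids $v_k$; thus each $U_j$ as you describe it contains no critical value, and its preimage is automatically disconnected. Concretely, an arch around $v_k$ lies entirely in every $W_m$ (it crosses the line $\im w=(2k+1)\pi$ only where $\re w>-1$, off $\rho_k$), so it lifts to an arc \emph{inside each strip $S_m$ separately} and supplies no edge between strips in your graph. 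For $f^{-1}(U_j)$ to be connected, $U_j$ must in fact contain infinitely many of the $v_k$ (this follows from Corollary~\ref{cor:compactlyc}); since the $U_j$ are disjoint, they must contain pairwise disjoint infinite subsets of $\{v_k\}$, so they cannot all ``engage every branch point'' as you propose.

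The difficulty is more than bookkeeping. The paper shows (Theorem~\ref{thm:itdoeshold}) that if two disjoint simply-connected domains both have connected preimage, then $\infty$ is accessible from neither, their closures intersect, and no common boundary point is accessible from both --- so the $U_j$ cannot be assembled from convex and slit-disk pieces along a spine reaching all the $\rho_k$, since such a domain has $\infty$ accessible; they must intertwine in a Lakes-of-Wada fashion. The paper's construction is accordingly recursive rather than explicit: it proves a general statement for \emph{vanilla} branched coverings (Theorem~\ref{thm:maintopological}), of which $e^z+z$ is an instance, by building two domains $U,V$ simultaneously through partial configurations $(D_{n_k},U_k,V_k)$. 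At each step $U_k$ is extended by a thin channel winding $m_{V_k}$ times around $\overline{V_k}$ and absorbing $m_{V_k}$ new critical values, chosen so that all preimage components of $U_k$ in $\tilde D_{n_k}$ coalesce (Proposition~\ref{prop:inductivestep}); then the roles of $U$ and $V$ alternate. The infinite family arises not in parallel but by reapplying the two-domain result inside one of the resulting domains (Corollary~\ref{cor:maintopological}).
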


   Although Theorem~\ref{thm:easyexample} does not answer Baker's original question 
     about completely invariant domains, it shows that his purely topological argument cannot be repaired.
     New ingredients, which involve the dynamics of the function under consideration, 
     would therefore be required to resolve this problem.

   While the function  $f$ in Theorem~\ref{thm:easyexample} is rather simple, the 
    structure of the domains $(U_j)$ is very complicated; they are constructed through a careful 
    recursive procedure that is somewhat reminiscent of the famous ``Lakes of Wada''. We show
    that some similar complexity is necessary, by establishing that
   the answer to Question~\ref{ques:main} is positive 
   when certain additional hypotheses are imposed on the domains
    $G_1$ and $G_2$. 
In the statement of the following theorem, $S(f)$ 
   denotes the closure of the set of all critical and finite asymptotic values of $f$ in $\C$.
Also, $\mathcal{S}$ is the \emph{Speiser class}, consisting of those transcendental entire functions $f$ for which $S(f)$ is finite.
Note that part \ref{walteralexresult} of this theorem is an immediate consequence of \cite[Theorem 1]{walteralex}, 
   which does not depend on Baker's argument, and is included here only for completeness.

\begin{thm}[Disconnected preimages]
\label{thm:itdoeshold}
Suppose that $f$ is a transcendental entire function, and that $G_1, G_2$ are disjoint simply-connected domains
  such that $f^{-1}(G_1)$ is connected. 
  If any of the following conditions hold, then $f^{-1}(G_2)$ is disconnected.
\begin{enumerate}[(a)]
\item $G_1$ is bounded and its closure does not separate $G_2$ from infinity.%
\label{boundeddomain}
\item $G_1 \cap S(f)$ is compactly contained in $G_1$.\label{compactlyc}
\item $f \in \mathcal{S}$.\label{inS}
\item The domain $f^{-1}(G_1)$ contains two asymptotic curves tending to different \emph{tran\-scendental
    singularities} of $f^{-1}$.\label{onlyonesingularity}
\item There exists $\xi \in \partial G_1 \cap \partial G_2$ such that $\xi$ is accessible from both $G_1$ and $G_2$.\label{sharedboundary}
\item Infinity is accessible from $G_1$.\label{infaccessible}
\item $\overline{G_1} \cap \overline{G_2} = \emptyset$.\label{disjointclosure}
\item $f$ has an omitted value.\label{walteralexresult}
\end{enumerate}
\end{thm}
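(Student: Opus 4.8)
The plan is to argue by contradiction: suppose $W_1:=f^{-1}(G_1)$ \emph{and} $W_2:=f^{-1}(G_2)$ are both connected, and derive a contradiction under each hypothesis. Two quick reductions: a transcendental entire function omits at most one value and hence cannot omit every value of a non-empty open set, so $W_1,W_2\ne\emptyset$; and neither $W_i$ equals $\C$, since $W_i=\C$ would force $f(\C)\subseteq G_i$ and thus $f^{-1}(G_j)=\emptyset$ for $j\ne i$. I would then establish the basic structural fact that $W_1$ and $W_2$ are \emph{simply connected}: if $W:=W_i$ were not, there would be a Jordan curve $\ell\subseteq W$ whose bounded face $D$ meets $\C\setminus W$ and therefore contains a non-empty component $Q$ of $\C\setminus W$; now $f(Q)\subseteq\C\setminus G_i$, whereas the argument principle gives $f(D)\subseteq\{w:n(f\circ\ell,w)>0\}\cup f(\ell)$, and since the closed curve $f\circ\ell$ lies in the simply-connected domain $G_i$ we have $\{w:n(f\circ\ell,w)\ne 0\}\subseteq G_i$; hence $\emptyset\ne f(Q)\subseteq G_i$, a contradiction. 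Using moreover that $f$ is open, one also gets $\partial W_i=f^{-1}(\partial G_i)$ and hence $\overline{W_i}=f^{-1}(\overline{G_i})$.

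A second elementary observation underlies everything that follows: $f|_{W_i}\colon W_i\to G_i$ is \emph{never} proper, since a proper holomorphic map has finite degree $d$, forcing $|f^{-1}(a)|=|f^{-1}(a)\cap W_i|\le d$ for $a\in G_i$, while a transcendental entire function assumes every non-exceptional value infinitely often. So the behaviour of $f$ on $W_i$ is genuinely transcendental near infinity. By itself this cannot produce a contradiction --- Theorem~\ref{thm:easyexample} shows none exists without extra hypotheses --- and it is precisely at this point that the topological argument of \cite{baker1970} breaks: it implicitly relied on $f|_{W_i}$ behaving, outside a negligible set, like a proper map or a covering.

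The heart of the proof is therefore a careful analysis of the transcendental singularities of $f^{-1}$ reached through $W_1$. Via a Riemann map $\phi\colon\D\to W_1$ and Fatou's theorem, $f\circ\phi$ has radial limits almost everywhere; at a boundary point where $\phi$ has a finite radial limit the value of $f\circ\phi$ lies on $\partial G_1$, whereas a finite radial limit of $f\circ\phi$ at a point $\zeta$ with $\phi^*(\zeta)=\infty$ yields an asymptotic curve of $f$ inside $W_1$, i.e.\ a transcendental singularity of $f^{-1}$ over a point of $\overline{G_1}$. I would pair this with a lifting principle: an access to a boundary point (or to $\infty$) of $G_1$ along a curve lifts under $f$ to a curve in $W_1$ which either lands on the fibre over that boundary value or escapes to infinity, in the latter case producing an asymptotic curve of $f$ over that value. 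The decisive step is that connectedness of $W_2$ constrains all of this rigidly: $W_2$ is contained in a single component $Q$ of $\C\setminus W_1$ with $\partial Q\subseteq\partial W_1$, and comparing $f$ along curves in $W_1$ with $f$ along curves in $W_2\subseteq Q$ shows that any transcendental singularity reached through $W_1$ must lie over a value $\xi$ shared with $G_2$ --- either $\xi\in\partial G_1\cap\partial G_2$, accessible from both sides, or $\xi=\infty$ with $\overline{G_1}$ not separating $G_2$ from infinity --- and that this singularity is essentially unique, behaving logarithmically, in that direction. Each clause of the theorem contradicts this picture. Clause (g) leaves no common boundary value, and, having disjoint closures, no admissible $\xi=\infty$. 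Clause (a) excludes $\xi=\infty$ once the non-separation hypothesis is invoked (it lets one route a curve in $W_2$ out to infinity), while $G_1$ bounded pins any singularity over $\overline{G_1}$. The accessibility hypotheses (e) and (f) directly exhibit the forbidden configuration, namely two disjoint asymptotic curves --- one in each $W_i$ --- over the same $\xi$, contradicting uniqueness. Clause (b), and hence (c) since a finite set is compactly contained in the open set $G_1$, keeps $S(f)$ away from $\partial G_1$ on the $G_1$-side, so an access can be steered to a point of $\partial G_1\setminus(S(f)\cup\{\infty\})$, over which no transcendental singularity of $f^{-1}$ can lie. Clause (d) is exactly the negation of the uniqueness assertion. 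Finally, clause (h) requires nothing new: it is immediate from \cite[Theorem~1]{walteralex}.

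The reductions, the simple-connectedness of the preimages, the identity $\partial W_i=f^{-1}(\partial G_i)$, and the short deductions of the individual clauses are all routine. The real obstacle --- carrying essentially all of the work --- is the structural step above: controlling the cluster sets and accessible prime ends of $W_1$ and $W_2$ well enough to prove that a second connected preimage makes the transcendental singularities of $f^{-1}$ over (the boundary of, or infinity relative to) $G_1$ unique and anchored at a value shared with $G_2$. This is the analytic ingredient missing from Baker's argument, and the bookkeeping of lifts, asymptotic paths, and measure-zero exceptional sets on $\partial\D$ must be carried out with considerable care.
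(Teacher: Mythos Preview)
Your preliminary reductions are fine: the $W_i$ are non-empty, proper, and simply-connected, and $f|_{W_i}$ is never proper. But the central ``structural step'' on which the rest of the argument hangs is false. You claim that, when both $W_1$ and $W_2$ are connected, every transcendental singularity of $f^{-1}$ reached through $W_1$ must lie over a value $\xi$ \emph{shared with $G_2$}~--- either $\xi\in\partial G_1\cap\partial G_2$ accessible from both sides, or $\xi=\infty$~--- and is essentially unique. The paper's own Theorem~\ref{thm:AVexample} is a direct counterexample: there $f^{-1}(U)$ and $f^{-1}(V)$ are both connected, yet the transcendental singularity of $f^{-1}$ reached through $f^{-1}(U)$ lies over the logarithmic asymptotic value $-1$, which is an \emph{interior} point of $U$, not a shared boundary point and not $\infty$. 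So the singularity need not lie over any value associated with $G_2$, and your deductions of clauses~\ref{boundeddomain}, \ref{compactlyc}, \ref{sharedboundary}, \ref{infaccessible} and~\ref{disjointclosure} all collapse. The ``bookkeeping of lifts, asymptotic paths, and measure-zero exceptional sets'' that you flag as the hard part cannot succeed, because the target statement is not true.

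The paper's proof is quite different in spirit. It does not attempt a global classification of the singularities; instead it isolates a correct kernel of Baker's argument (Proposition~\ref{prop:baker}): if $\gamma_p,\gamma_q$ are disjoint univalent lifts of an arc $\gamma$, connected at their endpoints by arcs $\beta_1,\beta_2$ with $f(\beta_1)\cap f(\beta_2)=\emptyset$, then every point of $\gamma\setminus(f(\beta_1)\cup f(\beta_2))$ bounds a \emph{bounded} component of $\C\setminus(f(\beta_1)\cup f(\beta_2)\cup\gamma)$. The Gross star theorem is used to produce $\gamma$, and each hypothesis then supplies just enough geometric control on the curves $f(\beta_k)$ to contradict this conclusion: for~\ref{boundeddomain} one first shows $S(f)\subset\overline{G_1}$ (Proposition~\ref{prop:compactconnected}), whence~\ref{compactlyc} and~\ref{inS} follow by shrinking $G_1$; \ref{sharedboundary} and~\ref{infaccessible} are direct applications; \ref{disjointclosure} is reduced to~\ref{boundeddomain} or~\ref{sharedboundary} via a separating Jordan curve; and~\ref{onlyonesingularity} is handled by an independent argument using Lindel\"of's theorem.
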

\begin{remark}
 In condition~\ref{onlyonesingularity}, an asymptotic curve is a curve to $\infty$
  along which $f$ converges to an asymptotic value $a\in\Ch$. Two curves
  tend to the same singularity over $a$ if they tend to infinity within the 
  same connected component of $f^{-1}(\Delta)$, for every connected open
  neighbourhood $\Delta$ of~$a$; compare
  \cite{bergweilereremenkosingularities}. 
     In particular,~\ref{onlyonesingularity} holds whenever $G_1$ 
    contains two different asymptotic values,
    or contains one asymptotic value $a$ such that the preimage of any small disc around $a$ has at least two connected
    components that are mapped by $f$ with infinite degree.
\end{remark}

Turning briefly to the dynamics of transcendental entire functions, the following is an easy consequence of Theorem~\ref{thm:itdoeshold}, and is at least a partial result towards the one proved in \cite{baker1970}. 
(We again refer to \cite{walteriteration} for definitions.)
\begin{cor}[Completely invariant domains]
\label{corr:itdoeshold}
Suppose that $f$ is a transcendental entire function, and that $G_1, G_2$ are distinct Fatou components of $f$.
If any of the conditions of Theorem~\ref{thm:itdoeshold} hold, then
   at most one of $G_1$ and $G_2$ is completely invariant.
\end{cor}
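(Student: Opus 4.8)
The plan is to deduce this directly from Theorem~\ref{thm:itdoeshold}, arguing by contradiction. Suppose that both $G_1$ and $G_2$ are completely invariant, so that $f^{-1}(G_1)=G_1$ and $f^{-1}(G_2)=G_2$. As distinct Fatou components, $G_1$ and $G_2$ are disjoint domains.

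The first step is to check that $G_1$ and $G_2$ satisfy the standing hypotheses of Theorem~\ref{thm:itdoeshold}; the only thing to verify is simple connectivity. For this I would recall the classical fact that a completely invariant Fatou component of a transcendental entire function is simply connected: such a component is forward invariant, hence is not a wandering domain, whereas every multiply-connected Fatou component of a transcendental entire function is a wandering domain. (I would give a reference for this in the iteration literature, e.g.\ via \cite{walteriteration}.) Thus $G_1$ and $G_2$ are disjoint simply-connected domains.

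The second step is the application of the theorem. Since $f^{-1}(G_1)=G_1$ is connected and, by assumption, one of the conditions \ref{boundeddomain}--\ref{walteralexresult} of Theorem~\ref{thm:itdoeshold} holds, the theorem yields that $f^{-1}(G_2)$ is disconnected. But $f^{-1}(G_2)=G_2$ is a domain and therefore connected, a contradiction. Hence at most one of $G_1$ and $G_2$ is completely invariant.

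I do not anticipate a genuine obstacle: the corollary is a soft consequence of Theorem~\ref{thm:itdoeshold}. The one point deserving a remark is that the simple-connectivity input used in the first step is itself unaffected by the gap in \cite{baker1970} discussed above, since it rests instead on Baker's (dynamically proved) result that multiply-connected Fatou components of transcendental entire functions wander.
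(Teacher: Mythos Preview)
Your argument is correct and is precisely the intended one: the paper does not spell out a proof, merely calling the corollary ``an easy consequence'' of Theorem~\ref{thm:itdoeshold}, and your two steps (simple connectivity of completely invariant Fatou components via Baker's theorem that multiply-connected Fatou components of transcendental entire functions are wandering, followed by a direct contradiction with Theorem~\ref{thm:itdoeshold}) are exactly what is needed. Your closing remark that this simple-connectivity input is independent of the flawed argument in \cite{baker1970} is apt and worth keeping.
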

Observe that a Siegel disc cannot be  completely invariant, as the map is injective thereon.
  Moreover, infinity is accessible from any Baker domain by definition. Hence, if $f$ has 
  more than one
   completely invariant Fatou component, each such component must be an attracting or parabolic basin.

\subsection*{Other examples}
We also exhibit three other functions having properties similar to those in Theorem~\ref{thm:easyexample}. First we consider the case of meromorphic functions. There are straightforward examples of meromorphic functions with 
two simply-connected domains each with connected preimage. For example, we can take $f(z) = \tan z$, and 
let $G_1, G_2$ be the upper and lower half-plane respectively. In a question closely related to 
Question~\ref{ques:main}, Eremenko \cite{eremenkotalk} asked whether a non-constant meromorphic function 
can have \emph{three} disjoint simply-connected regions each with connected preimage. We show that this is 
indeed possible, even for a meromorphic function with infinitely many poles.

\begin{thm}[Meromorphic functions and connected preimages]
\label{thm:meroexample}
   Let $f(z)=\tan z + z$. Then there is an infinite sequence $(U_j)_{j=1}^{\infty}$ of 
     pairwise disjoint simply-connected domains such that $f^{-1}(U_j)$ is connected for all $j$. 
\end{thm}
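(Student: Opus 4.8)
The plan is to revisit the recursive ``Lakes of Wada''-type construction that proves Theorem~\ref{thm:easyexample} for $g(z)=e^{z}+z$, and to adapt it to $f(z)=\tan z+z$. First I would isolate the structural inputs that this construction actually uses, and record them as a self-contained criterion: (i)~a translation relation $g(z+\omega)=g(z)+\omega$ for some $\omega\neq 0$; (ii)~a half-plane $H$ on which $g$ is univalent, is uniformly close to a translation, and whose image contains a half-plane --- a ``tame sheet''; and (iii)~a precise description of where and how $g$ fails to be injective, and of the $\omega$-periodic way in which this failure is arranged. Granting such a criterion, one fixes a convenient target region $W$, describes $g^{-1}(W)$ as a tame sheet carrying a periodic family of ``fingers'', and carves $W$ into pairwise disjoint simply-connected domains $U_{1},U_{2},\dots$ so that, for each $j$, a thin channel of $U_{j}$ is routed in tree-like fashion through the tame sheet into every finger. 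This keeps $g^{-1}(U_{j})$ connected, while careful bookkeeping keeps the $U_{j}$ pairwise disjoint and simply connected and forces them to be unbounded, to share boundary points, and to exclude access to $\infty$ (any simpler configuration having a disconnected preimage, by Theorem~\ref{thm:itdoeshold}).

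Next I would check the analogues of (i)--(iii) for $f(z)=\tan z+z$. Since $\tan$ has period $\pi$, one has $f(z+\pi)=f(z)+\pi$, so (i) holds with the real period $\omega=\pi$. For (ii): writing $z=x+iy$, one has $|\cos z|^{2}=\cos^{2}x+\sinh^{2}y\geq\sinh^{2}y>1$ whenever $y>1$, whence $|\sec^{2}z|<1$ and $\re f'=\re(\sec^{2}z)+1>0$ on the half-plane $H^{+}=\{\im z>1\}$; as $H^{+}$ is convex, $f$ is univalent there. Moreover $\tan z\to i$ uniformly as $\im z\to+\infty$, so on $\{\im z>R\}$ the map $f$ is close to the translation $z\mapsto z+i$, and $f(H^{+})$ contains an upper half-plane; the half-plane $H^{-}=\{\im z<-1\}$ behaves symmetrically, with $z\mapsto z-i$. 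For (iii): the only source of infinite valence of $f$ is its sequence of simple poles at $\tfrac{\pi}{2}+k\pi$, $k\in\Z$, near each of which $f$ is a biholomorphism onto a neighbourhood of $\infty$; and these poles are distributed $\pi$-periodically along $\R$. Thus $f$ has two tame sheets together with a periodically placed ``wild'' set, just as $g$ does.

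The main obstacle is that this wild set has a different geometry for the two functions. For $g$ it is an essential singularity whose logarithmic tracts escape to infinity in two directions, so the fingers of $g^{-1}(W)$ stretch off along these tracts and are readily reached from the tame half-plane. For $f$, by contrast, the wild set is an array of isolated poles lying along $\R$, \emph{between} the two tame sheets, so that (for $W$ a half-plane) $f^{-1}(W)$ consists of a tame sheet together with cusp-shaped fingers anchored at the poles, in a quite different spatial arrangement. The real work is therefore to choose $W$ --- and the positions of the tame sheets --- and to redesign the recursive carving so that, for each $j$, the preimage channel of $U_{j}$ runs close to $\R$, threading through every pole-finger, and $f^{-1}(U_{j})$ stays connected, while the $U_{j}$ remain pairwise disjoint, simply connected, unbounded, with common boundary and with $\infty$ inaccessible from each (which they must be, to evade the obstructions of Theorem~\ref{thm:itdoeshold} and its meromorphic analogues). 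I expect this re-engineering of the combinatorial model, rather than any individual estimate, to be the crux; by contrast the poles themselves are harmless, since near each one $f$ is a local biholomorphism, and re-orienting the construction to the horizontal period $\pi$ in place of the vertical period $2\pi i$ of $g$ is routine.
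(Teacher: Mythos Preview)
Your proposal misses the key simplification that makes the paper's proof almost immediate. You try to adapt the Lakes-of-Wada construction to $f$ on all of $\C$, treating the poles as the essential new feature to be threaded through. But the paper never confronts the poles at all: since $\im\tan z$ has the same sign as $\im z$, the upper half-plane $U=\{\im z>0\}$ satisfies $f^{-1}(U)=U$, and $f|_{U}\colon U\to U$ is a holomorphic branched covering between simply-connected surfaces with no poles in sight. One then checks that this restriction is \emph{vanilla}: the critical points in $U$ are the simple zeros of $1+\sec^{2}z$, namely $c_{m}=(m+\tfrac12)\pi+i\operatorname{arsinh}1$, and their images $v_{m}=c_{m}+i\sqrt{2}$ are pairwise distinct. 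Corollary~\ref{cor:maintopological} then applies verbatim to $f|_{U}$, and since $f^{-1}(U_j)\subset f^{-1}(U)=U$, connectedness of the preimages under $f|_{U}$ is connectedness under $f$.

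Beyond missing this shortcut, your plan has a structural gap even on its own terms. The recursive construction you are trying to adapt is driven entirely by the critical values and their unique critical preimages (this is the content of Propositions~\ref{prop:picture} and~\ref{prop:inductivestep}); translation invariance and ``tame sheets'' are not among its inputs. Your item~(iii) identifies only the poles as the locus of non-injectivity and never locates the critical points $c_{m}$, yet for $R>\operatorname{arsinh}1+\sqrt{2}$ the pole-fingers of $f^{-1}(\{\im w>R\})$ are separated from your tame sheet precisely by these critical points, and the combinatorics of routing channels past them is exactly what the vanilla machinery organises. So the ``re-engineering'' you flag as the crux would, once carried out correctly, reduce to rediscovering that $f|_{U}$ is vanilla.
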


\begin{rmk}
Note that another meromorphic example can be obtained directly from Theorem~\ref{thm:easyexample}
    by post-composing the function $z \mapsto e^z + z$ with a fractional linear transformation that takes a point of 
   $\C \setminus (U_1 \cup U_2)$ to infinity.
\end{rmk}

We next ask whether condition \ref{inS} in Theorem~\ref{thm:itdoeshold} can be weakened to require only that $f \in \mathcal{B}$.
Here $\mathcal{B}$ is the \emph{Eremenko-Lyubich class} consisting of those transcendental entire functions for which the set of singular values is bounded.
We show that this is not the case. We also use this example to show that the domains with connected preimages can be bounded.

\begin{thm}[Connected preimages in $\B$]
\label{thm:classbexample}
  There is a transcendental entire function $f\in\B$ such that there is an infinite sequence $(U_j)_{j=1}^{\infty}$ of 
     pairwise disjoint simply-connected domains such that $f^{-1}(U_j)$ is connected for all $j$.
     Moreover, these domains can be taken to be bounded.
\end{thm}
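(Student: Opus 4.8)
The plan is to prove Theorem~\ref{thm:classbexample} by running the method behind Theorem~\ref{thm:easyexample} on a new function, which we construct so as to lie in $\B$. So I would first re-examine the proof of Theorem~\ref{thm:easyexample} and isolate the ``soft'' hypotheses that its recursion actually uses. Morally, one needs an entire $f$ possessing an unbounded simply-connected tract on which $f$ behaves like $\exp$ --- equivalently, a region on which $f$ is univalent and nearly a translation --- together with a point $v_0$ in the range such that the combinatorial arrangement of the components of the $f$-preimage of a small disc about $v_0$ is rich enough to carry the ``Lakes of Wada''-type construction. The output of this step is a statement of the shape: \emph{any} $f$ admitting such a local model has a sequence $(U_j)$ of pairwise disjoint simply-connected domains with $f^{-1}(U_j)$ connected, and if $v_0$ may be taken finite then the $U_j$ are bounded; Theorem~\ref{thm:easyexample} is the case $f(z)=e^z+z$.

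The function $e^z+z$ itself is not of class $\B$: it commutes with $z\mapsto z+2\pi i$ and has critical points, so its critical values form an unbounded arithmetic progression, and this cannot be cured by post-composition, since a proper self-homeomorphism of $\C$ maps unbounded sets to unbounded sets. I would therefore construct $f$ from scratch, with a \emph{single} logarithmic tract over $\infty$ but with infinitely many critical points whose critical values accumulate at a finite point $v_0$; then $S(f)$ is bounded, so $f\in\B$, yet infinite, so neither part~\ref{inS} nor part~\ref{onlyonesingularity} of Theorem~\ref{thm:itdoeshold} obstructs the example. Concretely I would write down a quasiregular model --- in which $f$ maps a prescribed simply-connected tract onto $\{|w|>1\}$ as a universal covering and maps the complementary region onto $\{|w|<1\}$ by a branched covering, engineered so that the components of the preimage of a small disc about $v_0$ sit in the nested, self-similar pattern demanded by the first step --- and then invoke quasiconformal surgery, in the spirit of Bishop's folding construction, to replace it by an entire function $f$ quasiconformally equivalent to the model. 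Holomorphicity near $\infty$ inside the tract, together with control on the dilatation, keeps $f$ in $\B$ with a logarithmic tract.

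Given such an $f$, feeding it into the mechanism of the first step produces pairwise disjoint simply-connected domains $(U_j)$ with $f^{-1}(U_j)$ connected: at each stage of the recursion, connectedness is checked exactly as for $e^z+z$, the tract being used to join up the different sheets of the preimage. Since the recursion never leaves a bounded neighbourhood of the finite point $v_0$ in the range, the domains $U_j$ are bounded, which yields the last clause of the theorem.

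The work is concentrated in the second step. One must design the model map so that it simultaneously has bounded singular set (so that $f\in\B$), carries a combinatorially rich, self-similar cluster of critical values accumulating at the \emph{single} finite point $v_0$ (so that the recursion can be run), and does so robustly enough that the straightening homeomorphism does not destroy the threading that makes each $f^{-1}(U_j)$ connected. In particular, since periodicity --- the device that yields ``infinitely many'' domains for $e^z+z$ --- is incompatible with membership in $\B$, the infinitude of the family $(U_j)$ has to be built into the model as a self-similar structure near $v_0$; verifying that this structure is transported faithfully by the surgery, together with the bookkeeping of the nested recursion, is the main obstacle.
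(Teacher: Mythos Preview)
Your plan heads in a plausible direction but is far heavier than what the paper does, and it rests on a mischaracterisation of the mechanism behind Theorem~\ref{thm:easyexample}.

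The paper does not build a class-$\B$ function from scratch via Bishop-type folding. It takes Bergweiler's explicit function
\[
f(z) = \frac{12\pi^2}{5\pi^2-48}\left(\frac{(\pi^2-8)z+2\pi^2}{z(4z-\pi^2)}\cos\sqrt{z} + \frac{2}{z}\right),
\]
which is already in $\B$: its only finite asymptotic value is $0$, and its (infinitely many, simple) critical values lie in a bounded real interval $(0,c)$ accumulating only at $0$. Moreover $f$ has a completely invariant Fatou component $U$ containing all critical points and values. The only defect is that distinct critical points might share a critical value, so $f\colon U\to U$ need not be \emph{vanilla}. A mild quasiconformal surgery --- a local bump near each critical point nudging its critical value by a distinct small amount --- yields a quasiregular map $G$; Sto\"{\i}low factorisation gives an entire $g=G\circ\psi$ with $g\colon\psi^{-1}(U)\to U$ vanilla and $g\in\B$. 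Corollary~\ref{cor:maintopological} then supplies infinitely many disjoint simply-connected subdomains of $U$ with connected $g$-preimage. For boundedness one first restricts to a bounded simply-connected $U^1\subset U$ containing all critical values (its $g$-preimage is automatically connected) and applies Corollary~\ref{cor:maintopological} to $g\colon g^{-1}(U^1)\to U^1$.

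Two substantive gaps in your outline. First, the infinitude of the family $(U_j)$ for $e^z+z$ does \emph{not} come from its $2\pi i$-periodicity; it comes from iterating Theorem~\ref{thm:maintopological}: given two domains $U,V$ with connected preimage, the restriction $f\colon f^{-1}(V)\to V$ is itself vanilla, and one recurses. So no ``self-similar structure near $v_0$'' needs to be engineered into a model map. Second, the ``soft hypothesis'' the recursion actually uses is the vanilla condition --- simple critical points, discrete infinite critical-value set, one critical preimage per critical value --- and the sheets of the preimage are joined by a ``snake'' that wraps around \emph{critical values} so that its preimage passes through \emph{critical points}. It is not ``a region on which $f$ is univalent and nearly a translation'', and the logarithmic tract over $\infty$ plays no role in linking components. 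With that correction, the simplest route is exactly the paper's: start from an off-the-shelf $\B$ function whose restriction to a completely invariant domain is almost vanilla, and perturb; Bishop folding is unnecessary.
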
 

Our final example addresses the question whether condition \ref{onlyonesingularity} in Theorem~\ref{thm:itdoeshold} can be 
   weakened to require only that $f^{-1}(G_1)$ contains some asymptotic curve. 
   Again, this is not the case.

\begin{thm}[Examples with asymptotic values]
\label{thm:AVexample}
  There exists a transcendental entire function $f$ 
     and pairwise disjoint simply-connected domains $U$ and $V$, each with connected preimage
     and each containing exactly one asymptotic value of $f$.
\end{thm}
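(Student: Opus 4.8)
The plan is to construct $f$, $U$ and $V$ simultaneously, combining the recursive ``Lakes of Wada''-type procedure behind Theorem~\ref{thm:easyexample} with an approximation (or quasiconformal surgery) argument that equips $f$ with two prescribed logarithmic asymptotic tracts. Since every hypothesis \ref{boundeddomain}--\ref{walteralexresult} of Theorem~\ref{thm:itdoeshold} must fail for $(f,U,V)$ --- so that, for instance, $\overline{U}\cap\overline{V}\neq\emptyset$ with no shared boundary point accessible from both sides, $\infty$ is accessible from neither domain, $f$ omits no value, and $f\notin\mathcal S$ --- the domains $U$ and $V$ will necessarily be at least as intricate as those produced for $e^z+z$ in Theorem~\ref{thm:easyexample}, and $f$ will have infinitely many critical values.

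First I would fix distinct values $a,b\in\C$ and two disjoint unbounded Jordan ``funnels'' $T_a,T_b\subset\C$, accumulating only at $\infty$. Using a Carleman-type tangential approximation theorem --- or, equivalently, by redefining $e^z+z$ along $T_a$ and $T_b$ and straightening the resulting quasiregular map by quasiconformal surgery --- I would produce an entire function $f$ which is modelled along $T_a$ on a universal covering onto the punctured disc $D^*(a,\rho)$, with $f\to a$ at the end of $T_a$, modelled along $T_b$ on a universal covering onto $D^*(b,\rho)$, with $f\to b$ at the end of $T_b$, and which elsewhere agrees up to a small error with the map $e^z+z$, so that the recursive part of the argument can proceed as in the proof of Theorem~\ref{thm:easyexample}. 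Then $a$ and $b$ are logarithmic asymptotic values of $f$; and if the construction is arranged with enough care, the component of $f^{-1}(D(a,\rho))$ that contains the end of $T_a$ is the only \emph{unbounded} one, and likewise for $b$. This last point is what will guarantee that $U$ and $V$ each carry \emph{exactly} one asymptotic value, with no second transcendental singularity of $f^{-1}$ lying over a neighbourhood of $a$ or of $b$.

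Next, in $\C$ minus slightly enlarged copies of $T_a$ and $T_b$, I would run the recursive construction of Theorem~\ref{thm:easyexample}, producing nested simply-connected domains whose intersections are $U$ and $V$, modified so that $U$ eventually contains $D(a,\rho)$ and $V$ contains $D(b,\rho)$, and so that at every stage the finitely many pieces of the partially-built preimages are joined up as in that proof, with the extra bookkeeping that the unbounded tract inside $f^{-1}(D(a,\rho))\subset f^{-1}(U)$ is attached to the remaining sheets of $f^{-1}(U)$ along a controlled arc (symmetrically for $V$). The uniform error bars at each stage, exactly as in the original argument, then force the limit domains $U$ and $V$ to be simply connected, disjoint, with $\overline U\cap\overline V\neq\emptyset$ and no common boundary point accessible from both, and force $f^{-1}(U)$ and $f^{-1}(V)$ to be connected.

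The main obstacle I expect is reconciling an asymptotic tract with connectivity of the preimage. The tract inside $f^{-1}(U)$ is unbounded and $f$ has infinite degree on it, while the other sheets of $f^{-1}(U)$ arising from the weaving are bounded; one must route the ``bridges'' of the construction so that the tract is glued to all of those sheets while no bridge meets $T_b$ or a sheet of $f^{-1}(V)$, and without ever making $\overline U$ separate a point of $U$ from $\infty$ --- which would render $\infty$ accessible from $U$ and so trigger hypothesis~\ref{infaccessible}. Keeping all the hypotheses of Theorem~\ref{thm:itdoeshold} simultaneously violated while the two new asymptotic tracts sit inside $U$ and $V$ is the delicate part; by comparison, the approximation (or surgery) step and the verification that each of $U$ and $V$ carries exactly one asymptotic value are routine, given the control established above on the unbounded components of $f^{-1}(D(a,\rho))$ and $f^{-1}(D(b,\rho))$.
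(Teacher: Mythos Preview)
Your proposal outlines a plausible strategy, but it differs substantially from the paper's route and has a real gap at the step you label ``routine''.

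The paper does not build $f$ by approximation or surgery. It uses the explicit entire function
\[
  f(z) = \tfrac{1}{2}\bigl(\erf(z+\tfrac{1}{2})+\erf(z-\tfrac{1}{2})\bigr),
\]
and verifies directly (elementary computation plus Denjoy--Carleman--Ahlfors) that $f$ is odd, has simple critical points $i(2n+1)\pi/2$ with pairwise distinct purely imaginary critical values, and has exactly the two logarithmic asymptotic values $\pm 1$, each with a single tract. So from the outset one has a ``vanilla function with two asymptotic tracts attached'', with completely known covering structure. A variant of the partial-configuration machinery of Section~\ref{sect:construction} is then set up for this specific $f$: the tracts over $\pm 1$ anchor $\tilde U$ and $\tilde V$ along the real axis, the preimage picture (Proposition~\ref{prop:pictureasymptotic}) replaces the finite face count of Proposition~\ref{prop:picture} by two infinite chains of simple faces, and the extension step (Proposition~\ref{prop:nextdomain}) is the analogue of Proposition~\ref{prop:inductivestep}. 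The oddness of $f$ lets one swap the roles of $U$ and $V$ for free. This buys a short, self-contained argument with no approximation or deformation at all.

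The gap in your approach is the surgery/approximation step itself. After grafting tracts onto $e^z+z$ and straightening, you need the resulting entire function to retain a vanilla-like global structure (simple critical points, discrete and pairwise distinct critical values, no further asymptotic values, exactly one tract over each of $a,b$) so that the recursive construction can even be formulated. Quasiconformal straightening gives $f=G\circ\psi$ with $\psi$ quasiconformal, so the critical values of $f$ are those of your quasiregular model $G$; but you have not specified $G$ globally, only on $T_a$, $T_b$ and ``elsewhere up to small error'', and small perturbations of $e^z+z$ can easily collide critical values or create new ones. Carleman approximation is worse still: it gives no control whatsoever over critical points or asymptotic values off the approximation set. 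To make your route work you would have to write down a global quasiregular model with prescribed branching data (in the spirit of the proof of Theorem~\ref{thm:classbexample}) and then check that the required combinatorics survives; this is significantly more work than the paper's choice of an explicit $f$, and is exactly the part your sketch leaves undone.
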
 

The asymptotic values in our example are in fact \emph{logarithmic asymptotic values} (see Section~\ref{sect:AVexample}). In \cite[Lemma~11]{eremenkoandlyubich}, it is stated that 
   any completely invariant domain of an entire function must contain all logarithmic asymptotic values of $f$. However, the proof uses Baker's flawed argument, and
   would again imply that any simply-connected domain with connected preimage contains all such values. Hence, Theorem~\ref{thm:AVexample} shows that
   this proof also cannot be repaired. Compare the discussion in Section~\ref{sect:appendix}.

\subsection*{Topological results}
Theorems~\ref{thm:easyexample}, \ref{thm:meroexample} and \ref{thm:classbexample} all follow
from our main construction, which is topological in nature. We begin with some preliminary definitions.

 \begin{defn}[Branched coverings]
   A function $f\colon X\to Y$ between oriented topological surfaces is a \emph{branched covering} if every 
    point $w\in Y$ has a simply-connected neighbourhood $D\subset Y$ with the following property.
     If $\tilde{D}\subset X$ is a connected component  of $f^{-1}(D)$, then  $\tilde{D}$ is simply-connected
      and $f\colon \tilde{D}\to D$ maps like $z\mapsto z^d$ (up to orientation-preserving homeomorphisms), for some $d\geq 1$. 

    If $f$ is a branched covering, then we denote by $\CV(f)$ the  set of critical  values of  $f$, and
      by $\CP(f)$ the set of critical points of $f$. 
  \end{defn}

Our construction applies to a certain class of branched coverings, which we define as follows.
 \begin{defn}[Vanilla functions]
   Let $f\colon X\to  Y$ be a branched covering between non-compact simply-connected surfaces $X$ and $Y$. 
     We say that $f$ is \emph{vanilla} if all the following conditions hold.
     \begin{enumerate}[(a)]
       \item All critical points of $f$ are simple; in other words $f$ maps topologically like
       $z \mapsto z^2$ in a neighbourhood of each critical point.  
       \item $\CV(f)$ is a discrete infinite subset of $Y$.
       \item For every $v\in \CV(f)$, $f^{-1}(v)$ contains exactly one element of $\CP(f)$.
     \end{enumerate}
 \end{defn}

We are now able to state our main topological result.
\begin{thm}[Connected preimages for vanilla functions]
\label{thm:maintopological}
  Let $f\colon X\to Y$ be vanilla. Then there exist disjoint simply-connected domains $U,V~\subset~Y$ such that
    $f^{-1}(U)$ and $f^{-1}(V)$ are connected.
\end{thm}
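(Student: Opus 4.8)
The plan is to build the two domains $U$ and $V$ simultaneously by an infinite recursive procedure, controlling at every stage the topology of their preimages. Since $f$ is vanilla, $\CV(f) = \{v_1, v_2, \dots\}$ is a discrete infinite set, and over each $v_k$ there is exactly one critical point $c_k$, at which $f$ looks like $z\mapsto z^2$; all other points of $Y$ are evenly covered. The key observation is that a simply-connected domain $W\subset Y$ has connected preimage precisely when $W$ ``carries enough monodromy'': if $W$ avoids all critical values, $f^{-1}(W)$ is a disjoint union of copies of $W$ (one per sheet), whereas each time $W$ is enlarged to swallow a critical value $v_k$ in a controlled way (attaching a thin ``tentacle'' that runs out to $v_k$ and loops around it), two sheets of the covering over $W$ get glued together along the preimage of that tentacle. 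So the strategy is: start with two small disjoint topological discs $U_0 \ni p$, $V_0\ni q$ in the complement of all critical values; then alternately grow $U$ and $V$ by adjoining, at stage $k$, a thin simply-connected ``finger'' joining the current domain to a neighbourhood of a new critical value $v_{k}$, chosen so that the monodromy picked up eventually identifies all sheets. One must interleave the two constructions (the ``Lakes of Wada'' flavour the introduction alludes to) so that the fingers of $U$ and those of $V$ never meet, which is possible because at each finite stage both domains are compact-ish and the unused critical values are discrete, leaving room to route the next finger.

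First I would set up the bookkeeping for the covering $f\colon X\to Y$. Fix a basepoint $y_0\in Y\setminus\CV(f)$ and identify $f^{-1}(y_0)$ with a set $F$ of ``sheets''; since $X$ is connected (simply-connected, in fact), the monodromy action of $\pi_1(Y\setminus\CV(f), y_0)$ on $F$ is transitive, and because all critical points are simple, a small loop around each $v_k$ acts as a transposition (swapping the two sheets meeting at $c_k$, fixing the rest). For a simply-connected subdomain $W\subset Y$ containing $y_0$, the components of $f^{-1}(W)$ correspond to the orbits of $F$ under the subgroup $H_W\leq\pi_1(Y\setminus\CV(f))$ generated by those loops around critical values that are ``enclosed'' by $W$ in the appropriate sense; thus $f^{-1}(W)$ is connected iff these transpositions generate a transitive subgroup of $\operatorname{Sym}(F)$. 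The heart of the construction is then to choose, for $U$ and for $V$ separately, an exhausting sequence of thin fingers so that the associated set of transpositions generates a transitive group; since the transpositions $\{(\text{sheet-swap at }c_k)\}_{k}$ over the full set of critical values generate a transitive group (as $X$ is connected), we can greedily select a subsequence for $U$ whose swaps already suffice, and — crucially — because there are infinitely many critical values and the transpositions coming from a cofinite subset still generate transitively, we have a completely independent infinite supply left over for $V$.

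Concretely, the recursion runs as follows. Maintain at stage $n$ two disjoint open connected simply-connected sets $U_n\supset U_{n-1}$ and $V_n\supset V_{n-1}$, each a finite union of ``a disc plus thin fingers'', together with the data of which sheet-transpositions each currently realises. At an odd stage, pick the least-indexed critical value $v$ not yet used by either domain such that adding the transposition at $v$ enlarges the orbit structure of $U_n$ (if the orbit structure is already everything, pick the least unused $v$ and add a finger anyway, or simply stop extending $U$); route a thin simply-connected finger from $U_n$ out to a small disc around $v$, staying disjoint from $\overline{V_n}$ and from all the other (discretely many) critical values — possible since $Y\setminus\CV(f)$ minus a compact set is still connected and open — and let $U_{n+1}$ be the union; set $V_{n+1}=V_n$. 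Even stages do the same with the roles of $U$ and $V$ swapped. Put $U=\bigcup U_n$, $V=\bigcup V_n$. Then $U, V$ are disjoint simply-connected domains (an increasing union of simply-connected sets where each is obtained by attaching a simply-connected finger along a connected piece of the boundary remains simply-connected), and by construction the subgroup of $\pi_1(Y\setminus\CV(f))$ associated to $U$ (resp.\ $V$) acts transitively on $F$, so $f^{-1}(U)$ and $f^{-1}(V)$ are connected.

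The main obstacle — and the step that needs genuine care rather than the soft topology above — is the simultaneous disjointness of the two growing families of fingers together with the guarantee that each family still captures a transitive set of transpositions. Two sub-issues arise: (i) one must verify that restricting each domain to only use critical values from an infinite set that is \emph{disjoint} from the other domain's set still leaves enough monodromy to be transitive — this follows because if $T$ is any finite set of transpositions generating a transitive subgroup of $\operatorname{Sym}(F)$ and we delete one transposition, the rest generate a subgroup with at most two orbits, and one can always compensate using another transposition involving the ``same'' pair available from infinitely many other critical values over which the two sheets-to-be-glued also meet; more robustly, transitivity of the full monodromy means that for \emph{every} partition of $F$ into two nonempty sets there are infinitely many critical values whose transposition crosses the partition (since cofinite subsets of $\CV(f)$ still generate transitively, as removing finitely many branch points keeps $X$ connected over the complement), so a greedy ``refine the partition'' argument terminates using only critical values from any prescribed cofinite — hence from a suitable infinite — subset; (ii) the routing of fingers must be done so that the closures stay disjoint, which requires the fingers to be ``thin'' (contained in arbitrarily small neighbourhoods of chosen arcs) and the arcs to be chosen disjoint from each other and from the discrete set $\CV(f)$ except at their endpoints — a standard general-position/Urysohn-type argument on the surface $Y$, using that $\CV(f)$ is discrete and each $U_n, V_n$ has ``small'' closure outside a compact set. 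I would isolate (i) as a purely group-theoretic lemma about transpositions generating transitive groups, and handle (ii) by fixing once and for all a proper embedded tree or ribbon structure in $Y$ guiding all the fingers.
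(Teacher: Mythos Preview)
Your monodromy framework is correct: $f^{-1}(U)$ is connected precisely when the image of $\pi_1(U\setminus\CV(f))$ in $\operatorname{Sym}(F)$ acts transitively on the fibre $F$, and each critical value contributes a transposition. But the argument has a genuine gap at exactly the point you flag as the main obstacle.

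The error is in your resolution of sub-issue (i). You argue that a cofinite subset of the transpositions still generates a transitive group because ``removing finitely many branch points keeps $X$ connected over the complement''. This inference is invalid: removing $f^{-1}(S)$ from $X$ for finite $S\subset\CV(f)$ does keep $X$ connected, but the unbranched locus of the resulting map is still $Y\setminus\CV(f)$, and its monodromy is still generated by \emph{all} the $\tau_k$, including those with $v_k\in S$. Worse, for a vanilla function the conclusion is actually false. By the exhaustion in Proposition~\ref{prop:Dn}, each $\tilde D_n$ is a disc mapping with degree $n+1$ and containing exactly $n$ simple critical points; since $\tilde D_n$ is connected, the $n$ associated transpositions on its $n+1$ sheets form a \emph{tree}. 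Passing to the limit, the monodromy graph on the infinite fibre $F$ (for the natural system of paths) is an infinite tree, and deleting any single edge disconnects it. So one cannot simply discard the critical values handed to $V$ and expect the remainder to act transitively on their own.

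The related issue is that ``the transposition at $v$'' is not determined by $v$ alone: it depends on the homotopy class of the finger from $U_n$ to $v$ in $Y\setminus\CV(f)$, and that finger is constrained to avoid $\overline{V_n}$. If the finger runs straight out to a fresh critical value without winding around $V$, the new critical point merely attaches one new sheet to $\tilde U$ (compare Proposition~\ref{prop:picture}); it does \emph{not} absorb the simple preimage components of $U$ sitting in faces adjacent to $\tilde V$. To reach those, the finger must wind around $V$, and both the number of windings and the critical values collected along the way must match up. This is precisely the content of Proposition~\ref{prop:inductivestep}: the channel winds $m_V$ times around $\overline V$ while picking up $m_V$ new critical values in a prescribed pattern, so that $\tilde U'$ absorbs \emph{all} of $f^{-1}(U)\cap\tilde D_n$. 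Your greedy scheme (``pick the least unused $v$ that enlarges the orbit structure'') does not supply this analysis, and without it the induction does not close.
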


Observe that by part~\ref{compactlyc} of Theorem~\ref{thm:itdoeshold}, the domains $U$ and $V$
   from Theorem~\ref{thm:maintopological} must both contain infinitely many critical values of $f$. 
   It follows easily that the restriction  $f\colon f^{-1}(V)\to V$ is itself again vanilla, and 
   hence we can apply Theorem~\ref{thm:maintopological} again, obtaining two simply-connected
   subdomains of $V$, each with connected preimage. Continuing inductively, we obtain the following
   corollary.
\begin{cor}[Infinitely many domains with connected preimages]
\label{cor:maintopological}
  Let $f\colon X\to Y$ be vanilla. Then there is an infinite sequence $(U_j)_{j=1}^{\infty}$ of 
     pairwise disjoint simply-connected subdomains of $Y$ such that $f^{-1}(U_j)$ is connected for all $j$. 
\end{cor}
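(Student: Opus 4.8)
The plan is to prove Corollary~\ref{cor:maintopological} by iterating Theorem~\ref{thm:maintopological}, building alongside the domains $(U_j)$ a nested sequence of simply-connected ``ambient'' domains $Y\supseteq V_1\supseteq V_2\supseteq\cdots$. First I would apply Theorem~\ref{thm:maintopological} to $f$ itself, obtaining disjoint simply-connected $U_1,V_1\subseteq Y$ with $f^{-1}(U_1)$ and $f^{-1}(V_1)$ connected. Assuming inductively that $V_j$ has been constructed so that the restriction $g_j\defeq f|_{f^{-1}(V_j)}\colon f^{-1}(V_j)\to V_j$ is vanilla, I would then apply Theorem~\ref{thm:maintopological} to $g_j$ in place of $f$, obtaining disjoint simply-connected subdomains $U_{j+1},V_{j+1}$ of $V_j$ with $g_j^{-1}(U_{j+1})$ and $g_j^{-1}(V_{j+1})$ connected. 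Since $U_{j+1},V_{j+1}\subseteq V_j$, we have $g_j^{-1}(W)=f^{-1}(W)$ for $W\in\{U_{j+1},V_{j+1}\}$, so $f^{-1}(U_{j+1})$ and $f^{-1}(V_{j+1})$ are connected, which closes the induction provided $g_{j+1}$ is again vanilla. Pairwise disjointness of the $U_j$ is then automatic from the nesting: for $k>j$ we have $U_k\subseteq V_{k-1}\subseteq V_j$, while $V_j$ is disjoint from $U_j$ and, unwinding the chain, from each of $U_1,\dots,U_j$. Thus $(U_j)_{j=1}^{\infty}$ is the required sequence.

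The substance of the argument is therefore to check that, at each stage, the restriction $g_j=f|_{W}$ with $W\defeq f^{-1}(V_j)$ is again vanilla. Several parts are routine and inherited directly from $f$: the map $g_j\colon W\to V_j$ is a branched covering, being the restriction of the branched covering $f$ to the preimage of an open set; its critical points are precisely the critical points of $f$ lying in $W$, hence are still simple; and for $v\in V_j$ the fibre $g_j^{-1}(v)$ coincides with $f^{-1}(v)$, so $\CV(g_j)=\CV(f)\cap V_j$ and condition~(c) of the definition of ``vanilla'' passes to $g_j$. It remains to see that $\CV(f)\cap V_j$ is \emph{infinite} and that $W$ is a \emph{non-compact, simply-connected} surface. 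The first is exactly the observation recorded before the statement of the corollary: since $\CV(f)$ is discrete, a finite intersection $\CV(f)\cap V_j$ would be compactly contained in $V_j$, and then the (topological) argument behind part~\ref{compactlyc} of Theorem~\ref{thm:itdoeshold} would force $f^{-1}(V_j)$ to be disconnected, contrary to construction. Non-compactness of $W$ is easy: $W$ is open and non-empty (it contains the critical points over the infinitely many critical values in $V_j$), so were it compact it would be a non-empty closed and open subset of the connected surface $X$, hence all of $X$, which is impossible since $X$ is non-compact.

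The one point that genuinely needs care --- and the step I expect to be the main obstacle --- is the \emph{simple connectivity} of $W=f^{-1}(V_j)$. This cannot be obtained formally from the hypotheses alone: for a general branched covering of a simply-connected surface, a connected component of the preimage of a simply-connected domain need not be simply connected (it is not hard to produce degree-$2$ examples in which the domain surrounds two critical values and the corresponding preimage component is an annulus). Consequently, simple connectivity must be read off from the \emph{construction} carried out in the proof of Theorem~\ref{thm:maintopological}, rather than from its statement as quoted above. Concretely, I would arrange that proof so that the domains $U$ and $V$ it produces satisfy the stronger conclusion that $f^{-1}(U)$ and $f^{-1}(V)$ are simply connected --- for instance by exhausting $V$ by Jordan subdomains whose preimages form a nested sequence of connected, simply-connected sets, so that the preimage of $V$ inherits simple connectivity as an increasing union. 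With this strengthening in hand, every condition in the definition of ``vanilla'' is verified for each $g_j$, the induction of the first paragraph closes, and the corollary follows.
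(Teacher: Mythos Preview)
Your overall strategy is the same as the paper's: apply Theorem~\ref{thm:maintopological}, check that the restriction $f\colon f^{-1}(V)\to V$ is again vanilla, and iterate. The verification of the vanilla conditions, the use of part~\ref{compactlyc} of Theorem~\ref{thm:itdoeshold} to force $\CV(f)\cap V_j$ to be infinite, and the nesting argument for disjointness are all correct and match the paper.

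Where you go astray is in treating the simple connectivity of $W=f^{-1}(V_j)$ as a genuine obstacle requiring extra input from the construction. In fact it is automatic from the hypotheses: since $f$ is vanilla, the domain $X$ is itself simply connected, and for an open map between (non-compact) simply-connected surfaces \emph{every} connected component of the preimage of a simply-connected domain is simply connected. This is exactly Proposition~\ref{prop:branchedcover}'s companion, stated and used freely in the paper as the first clause of the ``Preimages of simply-connected domains'' proposition. The argument is elementary: a loop $\gamma\subset W$ bounds a compact disc $D$ in $X\cong\R^2$; since $f$ is open, $\partial f(\overline{D})\subset f(\gamma)\subset V_j$; as $Y\setminus V_j$ is connected and unbounded while $f(\overline{D})$ is compact, any path in $Y\setminus V_j$ from a putative point of $f(\overline{D})\setminus V_j$ to infinity would have to cross $\partial f(\overline{D})\subset V_j$, a contradiction. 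Hence $f(D)\subset V_j$ and $D\subset W$.

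Your proposed counterexample (``degree-$2$ examples in which the domain surrounds two critical values and the preimage component is an annulus'') cannot occur when the source surface $X$ is simply connected; such annular preimages arise only when $X$ already has non-trivial topology. So there is no need to strengthen the construction in Theorem~\ref{thm:maintopological}, and your induction closes as written once you invoke this general fact.
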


\begin{rmk}[Common boundaries]\label{rmk:commonboundary}
By a relatively straightforward modification to the construction in Theorem~\ref{thm:maintopological}, we can 
   ensure additionally that $\partial U = \partial V$; 
    see Observation~\ref{obs:commonboundary}.
With a more complicated modification, we can even ensure that the infinitely many domains of Corollary~\ref{cor:maintopological} have a common boundary. We omit the detail in order to keep our presentation as simple as possible.
\end{rmk}

\subsection*{Structure} 
The structure of this paper is as follows. To help orient the reader, we begin by giving an outline of the proof of Theorem~\ref{thm:maintopological} in Section~\ref{sect:overview}. In Section~\ref{sect:preliminary} we collect preliminary results relating to branched coverings and vanilla maps. 
Section~\ref{sect:construction} is dedicated to the proof of Theorem~\ref{thm:maintopological}. In Section~\ref{sect:examples} we deduce Theorems~\ref{thm:easyexample},~\ref{thm:meroexample} and~\ref{thm:classbexample}. The proof of Theorem~\ref{thm:AVexample} is carried out in Section~\ref{sect:AVexample}, using a modification
  of our main construction. In Section~\ref{sect:bakerflaw} we discuss the error in Baker's original proof, and then prove Theorem~\ref{thm:itdoeshold} in Section~\ref{sect:itdoeshold}. Finally, in Section~\ref{sect:appendix} we give details of the papers and results that are affected by the flawed proofs mentioned earlier.

\subsection*{Notation and terminology} 
The (Euclidean) open ball of radius $r>0$ around $a\in\C$ is denoted by
\[ B(a, r) = \{ z\colon |z - a| < r\}. \]

If $X, Y \subset \C$ are sets such that $X$ lies in a bounded component of the complement of $Y$, then we say that $Y$ \emph{surrounds} $X$. 

Suppose that $X$ is a surface (or any topological space), and let $A\subset X$. We denote the closure of $A$ by $\overline{A}$ or $\closure(A)$.
  If $B\subset A$  is connected, the connected component of $A$ containing $B$ is denoted by $\comp_B(A)$.
    When $x\in A$, we write $\comp_x(A) \defeq \comp_{\{x\}}(A)$.
Finally we set $\Ch \defeq \C \cup \{\infty\}$.

\subsection*{Acknowledgements}
We thank Alex Eremenko for making us aware of the error in Baker's proof discovered by Duval, 
  and for many interesting and useful discussions in the preparation of this article.
We also thank Julien Duval for allowing us to include a description of his argument (see Figure~\ref{fig:duval}), 
  and for reading an early draft of our paper.
	Finally, we thank the referee for several helpful comments, suggestions, and
	   corrections.
\section{An overview of the construction}
\label{sect:overview}
Recall that our main result is Theorem~\ref{thm:maintopological}. For a vanilla function $f$, this theorem asserts the existence of 
two disjoint simply-connected domains $U$ and $V$, each with connected preimage. Theorems~\ref{thm:easyexample}, \ref{thm:meroexample} and \ref{thm:classbexample} follow quickly once Theorem~\ref{thm:maintopological} is established.

Theorem~\ref{thm:maintopological} will be proved by an explicit, but rather complicated, construction of the domains $U$ and $V$. 
  For the benefit of the reader, we give a rough outline of our strategy in this section. We stress that this sketch is not intended to be precise. We note also that more general 
   constructions are possible, but these have additional complexity, which we seek to avoid; the principal goal of this paper is 
   to establish the three examples in Theorems~\ref{thm:easyexample}, \ref{thm:meroexample} and \ref{thm:classbexample}.

Recall from the introduction that a \emph{vanilla} function has a set of critical values with particularly simple properties. It follows from these
   properties any two vanilla functions are in fact topologically equivalent, and hence have the same combinatorial structure. One way of
   expressing this combinatorial structure is as follows (see Proposition~\ref{prop:Dn}); there is an increasing sequence of Jordan domains $(D_n)_{n=1}^{\infty}$ such that each $D_n$ contains exactly $n$ critical values, and such that there is a preimage component $\tilde{D}_n$ of $D_n$ that contains all the critical preimages of these critical values. 

The domains $U$ and $V$ will be obtained as increasing unions of simply-connected domains $U_k, V_k\subset D_{n_k}$, where $n_k$ is a sequence tending to infinity 
   and the $D_n$ are the Jordan domains mentioned above. Here the domains $U_k$ and $V_k$ are disjoint and have the 
   following straightforward relationship to the critical values; there are 
   connected components $\tilde{U}_k, \tilde{V}_k \subset \tilde{D}_{n_k}$ of $f^{-1}(U_k)$ and $f^{-1}(V_k)$, respectively, such that
   \[ f(\CP(f)\cap (\tilde{U_k}\cup \tilde{V_k})) = \CV(f)\cap D_{n_k}. \]
    (That is, all critical points corresponding to critical values in $D_{n_k}$ belong to either $\tilde{U}_k$ or $\tilde{V}_k$.) 
        We shall call $(D_{n_k}, U_k, V_k)$ a \emph{partial configuration}. Compare Proposition~\ref{prop:picture} and Figure~\ref{fig:1}.

Given a partial configuration $(D_{n_k}, U_k, V_k)$, the crux of the proof is to construct a subsequent partial configuration $(D_{n_{k+1}}, U_{k+1}, V_{k+1})$ with 
  $n_{k+1}> n_k$, $V_{k+1} = V_k$, and, most importantly, 
   such that $\tilde{U}_{k+1}$ contains all the preimage components of $U_k$ in $\tilde{D}_{n_k}$.

This construction is given in Proposition~\ref{prop:inductivestep}; compare Figures~\ref{fig:3} and \ref{fig:5}. We choose $n_{k+1}$ so that $D_{n_{k+1}} \setminus D_{n_k}$ contains the same number of critical values as $V_k$.
We then create $U_{k+1}$ by adding a thin ``snake'' to $U_k$ that ``wraps around'' all these critical values and also $V_k$, and then includes the critical values. It is shown that this domain $U_{k+1}$ has the required properties. Proving that there is a preimage component of $U_{k+1}$ that does indeed contain all the preimage components of $U_k$ is the point at which we rely on the simple properties of the preimages of partial configurations.

At the next stage we reverse the roles of $U_k$ and $V_k$, and then iterate the two steps of this process infinitely often. Finally we set $U = \bigcup U_k$ and $V = \bigcup V_k$. It is then shown to follow from the construction 
that $f^{-1}(U)$ and $f^{-1}(V)$ are indeed connected, and this completes the proof of Theorem~\ref{thm:maintopological}.

%
%
%
%

\section{Preliminary results}
\label{sect:preliminary}

We require two simple results concerning branched coverings. The first we use frequently, without comment.
 \begin{prop}[Preimages of simply-connected domains]
    Let $f\colon X\to Y$ be a branched covering between non-compact 
      simply-connected surfaces.  
    Suppose that 
     $U\subset Y$ is a simply-connected
       domain, and  that  $\tilde{U}$ is a 
     component of $f^{-1}(U)$ such that $\tilde{U}\cap \CP(f)$ is finite. 
     Then 
     $f\colon \tilde{U}\to U$ is a proper map, 
      and $\tilde{U}$ is simply-connected.
    
     If additionally $U$ is bounded by a Jordan curve in $Y$ that contains 
      no critical values of $f$, then 
      $\tilde{U}$ is also bounded by a Jordan curve in $Y$.
  \end{prop}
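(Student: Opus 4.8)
The plan is to establish the two assertions in turn, with the Jordan‑curve statement as a postscript; the decisive step will be to show that $f\colon\tilde U\to U$ has \emph{finite degree}, and this is exactly where the hypothesis that $\tilde U$ contains only finitely many critical points enters.

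\textbf{Properness.} First note that $f$ is an open map, since locally it is of the form $z\mapsto z^{d}$. Put $C\defeq\CP(f)\cap\tilde U$ (finite by hypothesis), $W\defeq f(C)\subset U$ (finite), $U_0\defeq U\setminus W$, and $\tilde U_0\defeq\tilde U\setminus f^{-1}(W)$. As the fibres of $f$ are discrete, $f^{-1}(W)\cap\tilde U$ is discrete in $\tilde U$, so $\tilde U_0$ is connected. Over any point of $U_0$ the local model $z\mapsto z^{d}$ forces $d=1$ (otherwise a critical point of $f$ would lie in $\tilde U$ over $U_0$, hence in $C$, contradicting $f(C)=W$), from which one checks readily that $f\colon\tilde U_0\to U_0$ is a covering map; being connected, $\tilde U_0$ realises a transitive monodromy action of $\pi_1(U_0)$ on a fibre $F$. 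Since $U\cong\R^2$ and $W$ consists of $k\defeq|W|$ points, $\pi_1(U_0)$ is free on loops $\gamma_1,\dots,\gamma_k$ around the $w_j$. The key point is that each $\rho(\gamma_j)$ has \emph{finite support}: over a small punctured disc around $w_j$ the covering decomposes into the sets $\tilde D_\alpha\setminus\{\text{centre}\}$, one for each component $\tilde D_\alpha$ of $f^{-1}(D_j)\cap\tilde U$, and $\tilde D_\alpha\setminus\{\text{centre}\}\to D_j\setminus\{w_j\}$ is the $d_\alpha$‑fold cyclic cover, so $\rho(\gamma_j)$ is a product of disjoint $d_\alpha$‑cycles; only finitely many $\tilde D_\alpha$ are ramified (each contains a distinct point of the finite set $C$), the rest contributing fixed points. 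Hence the monodromy group, generated by the $k$ finite‑support permutations $\rho(\gamma_j)$, consists entirely of permutations supported in the fixed finite set $\bigcup_j\operatorname{supp}\rho(\gamma_j)$; transitivity then forces $F$ to be finite. Thus $f\colon\tilde U_0\to U_0$ has a finite degree $N$, every fibre $f^{-1}(w)\cap\tilde U$ has at most $N$ points, and for every $w\in U$ the preimage $f^{-1}(D)\cap\tilde U$ of a small model neighbourhood $D$ of $w$ has at most $N$ components, each mapped properly by $f$. Covering a compact $K\subset U$ by finitely many such model discs now shows, by a routine compactness argument, that $(f|_{\tilde U})^{-1}(K)$ lies in a compact subset of $\tilde U$; as it is also closed in $\tilde U$, it is compact. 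So $f|_{\tilde U}$ is proper.

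\textbf{Simple connectivity.} Since $\tilde U$ is an open connected subset of $X\cong\R^2$, it is simply connected unless $X\setminus\tilde U$ has a compact component $K$, and I would rule this out using that $\tilde U$ is a \emph{full} preimage component. One always has $f(\partial\tilde U)\subset\partial U$ (a point of $\partial\tilde U$ mapping into $U$ would lie in its own component of $f^{-1}(U)$, which meets and hence equals $\tilde U$ — impossible), so $f(\partial K)\subset\partial U$. Next, $f(K)\cap U=\emptyset$: if some $x_0\in K$ had $f(x_0)\in U$, the component $\tilde U'\defeq\comp_{x_0}(f^{-1}(U))$ would be disjoint from $\tilde U$, hence contained in $K$, hence relatively compact; one checks (using compactness of $\overline{\tilde U'}$) that $f|_{\tilde U'}$ is then proper, and, being also open, it would satisfy $f(\tilde U')=U$, impossible since $f(\tilde U')\subset f(K)$ is compact while $U$ is not. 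Consequently $K^{\circ}$ is a union of relatively compact components of $f^{-1}(Y\setminus\overline U)$, and repeating the proper‑and‑open argument shows $f$ maps one of them \emph{onto} a component of $Y\setminus\overline U$, which is then relatively compact; a further application of the same argument with that component in place of $U$, or a direct winding‑number count around $\partial U$, yields the required contradiction. Hence $\tilde U$ is simply connected.

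\textbf{The Jordan addendum, and the main obstacle.} If $\partial U$ is a Jordan curve containing no critical values, then $f$ is an (unbranched) covering on a neighbourhood of $\partial U$, so by properness $\partial\tilde U$ is a finite disjoint union of Jordan curves each covering $\partial U$; since $\tilde U$ is simply connected there is exactly one, and it bounds $\tilde U$. I expect the finiteness of the degree $N$ — equivalently, that the monodromy group of $\tilde U_0\to U_0$ acts on a finite set — to be the one genuinely substantial point; once it is in hand, properness is a compactness formality and simple connectivity follows from the soft ``full component'' argument, the only delicate case there being when a complementary component of $\tilde U$ is mapped into $Y\setminus\overline U$.
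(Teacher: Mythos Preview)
Your monodromy argument for properness is correct and rather elegant: reducing to the permutation group generated by the local monodromies around the finitely many critical values $w_j\in W$, each of which has finite support, is a clean and self-contained way to see that the fibre is finite. The paper takes a different route here, simply observing (with a citation) that an infinite-degree component would force infinitely many critical points by Riemann--Hurwitz; your argument supplies what that citation hides.

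However, your simple-connectivity argument has a genuine gap. The line ``impossible since $f(\tilde U')\subset f(K)$ is compact while $U$ is not'' does not yield a contradiction: a subset of a compact set need not be compact, and all you can conclude from $f(\tilde U')=U\subset f(K)$ is that $\overline U$ is compact, i.e.\ that $U$ is bounded. When $U$ \emph{is} bounded---which the hypotheses certainly allow---you have derived nothing, and in particular you have \emph{not} shown that $f(K)\cap U=\emptyset$. Your continuation (``Consequently $K^{\circ}$ is a union of relatively compact components of $f^{-1}(Y\setminus\overline U)$\ldots'') presupposes exactly this conclusion, so the argument does not close.

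The paper's approach to simple connectivity is much lighter and does not use the critical-point hypothesis at all: it holds for \emph{every} component of $f^{-1}(U)$, purely because $f$ is open. If $\gamma\subset\tilde U$ is a Jordan curve bounding a disc $D$ in $X\cong\R^2$, then openness gives $\partial f(D)\subset f(\partial D)=f(\gamma)\subset U$; hence $f(D)\setminus f(\gamma)$ is open and closed in $Y\setminus f(\gamma)$ and bounded, so is a union of bounded complementary components of $f(\gamma)$. Since $U$ is simply connected and $f(\gamma)\subset U$, the set $Y\setminus U$ is connected and unbounded, so every bounded component of $Y\setminus f(\gamma)$ lies in $U$. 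Thus $f(D)\subset U$ and $D\subset\tilde U$. With simple connectivity established this way, your Jordan-curve addendum then goes through as you indicate.
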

 \begin{remark}
   Note that any non-compact simply-connected surface is homeomorphic 
       to the plane. In particular, $f$ is topologically equivalent either to
       an entire function $\C\to\C$ or a holomorphic map $\D\to\C$.
  \end{remark}
 \begin{proof}
  The fact that any pre-image component of a simply-connected domain 
    is simply-connected follows from the fact
    that $f$ is an open mapping. If $f\colon \tilde{U}\to U$ was not a proper map,
    and hence had infinite degree, it would have to contain infinitely
    many critical points, essentially by the Riemann-Hurwitz formula. 
    See \cite[Proposition 2.8]{walternurialasse}, where an
     analogous result is stated for entire functions; the proof is purely topological and 
      applies equally  in our setting.

  Likewise, the final claim of the proposition is proved for entire functions in 
    \cite[Proposition~2.9~(3)]{walternurialasse}, and again the proof applies
     in our setting.
 \end{proof}

\begin{prop}[Entire functions and branched coverings]
\label{prop:branchedcover}
   Suppose that $f$ is an entire function, and that $U \subset \C$ is a simply-connected domain
   such that $U$ contains no asymptotic values 
	and $U \cap \CV(f)$ is discrete. Then $f$ is a branched covering
   from each component of $f^{-1}(U)$ to $U$.
\end{prop}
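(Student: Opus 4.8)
The plan is to verify directly that $f$ restricts to a branched covering from each component $\tilde W$ of $f^{-1}(U)$ onto $U$. By definition this means producing, for every $w\in U$, a simply-connected neighbourhood $D\subset U$ of $w$ such that every component of $f^{-1}(D)$ is simply connected and mapped by $f$ like $z\mapsto z^{d}$ for some $d\geq 1$; since each component of $f^{-1}(D)$ is contained in a single component of $f^{-1}(U)$, this local statement yields the proposition at once. As $\CV(f)\cap U$ is discrete it has no accumulation point in $U$, so for any $w\in U$ we may fix $D=B(w,r)$ with $\overline{D}\subset U$ and $\overline{D}\cap\CV(f)\subseteq\{w\}$.

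The key step is the following assertion: \emph{if $A\subset U$ is an open set with $\overline{A}\subset U$ and $\overline{A}\cap\CV(f)=\emptyset$, then $f\colon f^{-1}(A)\to A$ is an unbranched covering map.} Since $A$ contains no critical value, $f$ is a local homeomorphism at each point of $f^{-1}(A)$, so it suffices to establish the path-lifting property. Given a path $\gamma\colon[0,1]\to A$ and $z_{0}\in f^{-1}(\gamma(0))$, let $[0,t^{*})$ be the maximal interval on which the lift $\tilde\gamma$ of $\gamma$ with $\tilde\gamma(0)=z_{0}$ is defined. If $\tilde\gamma(t)$ had a finite limit point $z^{*}$ as $t\to t^{*}$, then $f(z^{*})=\gamma(t^{*})\in\overline{A}$, which is not a critical value, so $f$ would be a local homeomorphism near $z^{*}$ and the lift would extend past $t^{*}$ (or to $t=1$), contradicting maximality. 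Hence $\tilde\gamma(t)\to\infty$ as $t\to t^{*}$; but then $\tilde\gamma|_{[0,t^{*})}$ is a curve to infinity along which $f\to\gamma(t^{*})\in\overline{A}\subset U$, exhibiting $\gamma(t^{*})$ as a finite asymptotic value of $f$ --- contrary to hypothesis. So every path lifts, and a local homeomorphism with the path-lifting property over a locally path-connected base restricts to a covering map on each component of its domain. When $w\notin\CV(f)$ we apply this with $A=D$; as $D$ is simply connected the covering is trivial, so each component of $f^{-1}(D)$ is simply connected and maps homeomorphically onto $D$ (so $d=1$), which is the desired conclusion in this case. This is the only place where the absence of asymptotic values in $U$ is genuinely used.

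Now suppose $w\in\CV(f)$, and take $D=B(w,r)$ with $\overline{D}\cap\CV(f)=\{w\}$. Any path in the punctured disc $D^{*}\defeq D\setminus\{w\}$ has compact image, hence lies at positive distance from $w$ and from every critical value, so the argument of the previous paragraph applies verbatim and shows that $f\colon f^{-1}(D^{*})\to D^{*}$ is an unbranched covering. Since $\pi_{1}(D^{*})\cong\Z$, each component of $f^{-1}(D^{*})$ is either a punctured disc, mapped by $f$ like $z\mapsto z^{d}$ for some finite $d$, or the universal (infinite-degree) cover, mapped like $z\mapsto e^{z}$; the second possibility cannot occur, since a curve running to infinity in such a component would again realise $w$ as an asymptotic value in $U$. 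Thus every component of $f^{-1}(D^{*})$ has finite degree over $D^{*}$. Finally we reattach the fibre over $w$: a component $\tilde D$ of $f^{-1}(D)$ meets the discrete set $f^{-1}(w)$, and $\tilde D\setminus f^{-1}(w)$ is a disjoint union of the annular pieces just described. Each point of $\tilde D\cap f^{-1}(w)$ is the inner puncture of exactly one such piece (that the inner end is a point of $\C$, rather than an ideal point at infinity, is once more because $U$ has no asymptotic value; and $f$, being holomorphic, then takes the value $w$ there), and a single annular piece together with its inner puncture point is already open and closed in $\tilde D$. By connectedness $\tilde D$ therefore consists of exactly one such piece and one point, so it is a topological disc and $f\colon\tilde D\to D$ is equivalent to $z\mapsto z^{d}$.

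Assembling the two cases gives, for every $w\in U$, a neighbourhood $D$ with the required property, which proves the proposition. I expect the path-lifting argument of the second paragraph to be the main point: it is the one place where the hypothesis that $U$ contains no asymptotic value is needed, and it is exactly what rules out logarithmic (infinite-degree) behaviour over a critical value in the last step. The remaining work --- the trivialisation of coverings of discs and punctured discs, and the connectedness bookkeeping used to fill in the fibre over a critical value --- is routine.
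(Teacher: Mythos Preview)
Your argument is correct. The paper itself offers no proof beyond the single line ``This is clear from the definition,'' so there is no approach to compare against; what you have done is to spell out, carefully and correctly, exactly why the statement \emph{is} clear. Your identification of the path-lifting step as the heart of the matter is right: this is precisely where the absence of asymptotic values in $U$ enters, both to complete lifts over non-critical values and to exclude logarithmic (infinite-degree) behaviour over an isolated critical value.

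One small remark on exposition: your final paragraph, where you reattach the fibre over $w$, is a little compressed. The assertion that the ``inner end'' of each annular piece $P$ is a single point of $f^{-1}(w)$, and that $P\cup\{z_0\}$ is closed in the ambient component $\tilde D$, deserves another sentence. The cleanest way to see the first point is that the cluster set of the inner end is a connected compact subset of $f^{-1}(w)\cup\{\infty\}$ (a set whose only non-isolated point is $\infty$), hence a singleton; the case $\{\infty\}$ is ruled out by the asymptotic-value hypothesis exactly as you say. For closedness, one checks that no other preimage of $w$ can lie in $\overline{P}$ by tracking $f$ along a sequence approaching such a point and using that the outer end of $P$ maps to $\partial D$. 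None of this is difficult, and your sketch points in the right direction; it is just worth making explicit that a short cluster-set argument is doing the work.
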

\begin{proof}
  This is clear from the definition. 
\end{proof}

We also need two structural results that are useful when studying vanilla functions.

\begin{prop}[Increasing sequence of Jordan domains]
\label{prop:Dn}
  Let $f\colon X\to Y$ be vanilla, and let $c$ be a critical point of $f$. 
    Then there is an increasing sequence $(D_n)_{n=1}^{\infty}$ of Jordan domains
    in $Y$ such that $f(c)\in D_1$, $\bigcup_{n\geq 1} D_n = Y$, and the following hold for all $n\geq 1$:
 \begin{enumerate}[(a)]
    \item $\overline{D_n}\subset D_{n+1}$;
     \item $\partial D_n \cap \CV(f)=\emptyset$; 
     \item $\#(D_n\cap \CV(f)) = n$; and
     \item $f(\tilde{D}_n \cap \CP(f)) = D_n\cap \CV(f)$,
     where $\tilde{D}_n\defeq \comp_c (f^{-1}(D_n))$.
   \end{enumerate}
\end{prop}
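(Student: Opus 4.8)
The plan is to construct $(D_n)$ by induction on $n$, at each step enlarging $D_n$ by a thin ``finger'' that reaches out to one new critical value and drags the corresponding critical point into the preimage component. Since $Y$ is a non-compact simply-connected surface, fix a homeomorphism $Y\cong\R^2$ and use it to transport the Euclidean metric and a compact exhaustion $K_1\subset K_2\subset\cdots$ of $Y$. As $\CV(f)$ is discrete and infinite we may enumerate it as $v_1,v_2,\dots$ with $v_1=f(c)$, and for each $m$ let $c_m$ be the unique element of $\CP(f)$ with $f(c_m)=v_m$ (so $c_1=c$); this uniqueness is part of the definition of vanilla. For the base case, take $D_1$ to be a small Jordan disc around $v_1$ containing no other critical value and with $\partial D_1\cap\CV(f)=\emptyset$; by the first Proposition the component $\tilde D_1=\comp_c(f^{-1}(D_1))$ is then a Jordan domain on which $f$ is a proper branched covering, branched only over $v_1$ at the simple critical point $c$, so $\tilde D_1\cap\CP(f)=\{c\}$ and (b), (c), (d) hold for $n=1$ (property (a) for each $n$ will be secured when $D_{n+1}$ is built).

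For the inductive step, suppose $D_n$ has been constructed with (a)--(d) and $D_n\cap\CV(f)=\{v_1,\dots,v_n\}$. By the first Proposition $f\colon\tilde D_n\to D_n$ is a proper branched covering whose critical points are exactly $c_1,\dots,c_n$. The point $c_{n+1}$ is not in $\overline{\tilde D_n}$, but $X$ is connected, so we may choose a path $\delta$ in $X$ from $c_{n+1}$ to $c$. The set $f^{-1}(\CV(f)\setminus\{v_1,\dots,v_{n+1}\})$ is discrete in $X$ and contains neither $c_{n+1}$ nor $c$; since removing a discrete set from a connected surface keeps it connected, we may take $\delta$ to avoid this set, so that $\beta:=f\circ\delta$ is a path from $v_{n+1}$ to $v_1$ meeting $\CV(f)$ only inside $\{v_1,\dots,v_{n+1}\}$. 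Choose $m(n)$ so large that $K_{m(n)}\cap\CV(f)\subseteq\{v_1,\dots,v_{n+1}\}$ (possible, since each $K_m$ contains finitely many critical values), and set $K:=\overline{D_n}\cup\beta([0,1])\cup K_{m(n)}$, a compact connected subset of $Y$ with $K\cap\CV(f)=\{v_1,\dots,v_{n+1}\}$. As $\CV(f)$ is closed and discrete, $\CV(f)\setminus K$ has positive distance from $K$, so we may take $D_{n+1}$ to be a sufficiently thin Jordan-domain neighbourhood of $K$ with $D_{n+1}\cap\CV(f)=\{v_1,\dots,v_{n+1}\}$ and $\partial D_{n+1}\cap\CV(f)=\emptyset$.

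Properties (a), (b), (c) are then immediate, and $\bigcup_n D_n\supseteq\bigcup_n K_{m(n)}=Y$, giving the exhaustion. For (d): since $\tilde D_n$ is connected with $f(\tilde D_n)=D_n\subseteq D_{n+1}$ and $c\in\tilde D_n$, we get $\tilde D_n\subseteq\tilde D_{n+1}:=\comp_c(f^{-1}(D_{n+1}))$, so $c_1,\dots,c_n\in\tilde D_{n+1}$. Also $\delta\subseteq f^{-1}(\beta([0,1]))\subseteq f^{-1}(D_{n+1})$ is connected and meets $\tilde D_n\subseteq\tilde D_{n+1}$, hence $\delta\subseteq\tilde D_{n+1}$ and in particular $c_{n+1}\in\tilde D_{n+1}$. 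Therefore $\{v_1,\dots,v_{n+1}\}=f(\{c_1,\dots,c_{n+1}\})\subseteq f(\tilde D_{n+1}\cap\CP(f))\subseteq D_{n+1}\cap\CV(f)=\{v_1,\dots,v_{n+1}\}$, which is (d). Finally, vanilla-ness forces $\tilde D_{n+1}\cap\CP(f)=\{c_1,\dots,c_{n+1}\}$, so $f\colon\tilde D_{n+1}\to D_{n+1}$ is again a proper branched covering and the inductive hypothesis is restored.

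I expect the main obstacle to be the claim in the second paragraph that a \emph{thin Jordan-domain} neighbourhood of $K$ exists with the stated intersection with $\CV(f)$: an arbitrary small neighbourhood of the ``disc-with-attached-path'' $K$ need not be simply connected (if $\beta$ winds around a point, a handle appears), and then a Jordan domain containing $K$ could be forced to swallow additional critical values. The way around this is to choose the finger with more care: enlarge $D_n$ slightly first (without changing its critical values), and route $\beta$ so that it is a \emph{simple} arc meeting $\overline{D_n}$ only at its terminal endpoint and hugging $\partial D_n$ elsewhere, so that $K$ is a closed disc with a single tame whisker and $Y\setminus K$ is connected; such a set does have a neighbourhood basis of Jordan domains. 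The delicate point is to do this while simultaneously keeping a lift of $\beta$ from $c_{n+1}$ inside $\tilde D_{n+1}$ — that is, controlling the monodromy of $f$ along $\beta$ — and this is handled using the explicit local model $z\mapsto z^2$ at the simple critical points together with Schoenflies-type normalisations of $f\colon\tilde D_n\to D_n$.
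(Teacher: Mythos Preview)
Your approach differs genuinely from the paper's. You build $D_{n+1}$ from $D_n$ by \emph{adding} a finger reaching out to one new critical value; the paper instead first produces a large Jordan domain $U\supset B(0,R)$ and then \emph{removes} a slit $\gamma_v$ to each critical value $v\in U$ whose critical preimage fell outside $\tilde U$, obtaining a subsequence $(D_{n_j})$; the intermediate $D_n$ are then filled in by removing one further slit at a time from $D_{n_{j+1}}$.

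There is a minor error and a real gap. The minor error: ``choose $m(n)$ so large that $K_{m(n)}\cap\CV(f)\subseteq\{v_1,\dots,v_{n+1}\}$'' is backwards---enlarging $m$ enlarges $K_m$ and hence can only add critical values, and with the enumeration of $\CV(f)$ fixed in advance no such $m(n)$ need exist. This is repairable (enumerate $\CV(f)$ compatibly with the exhaustion and let $m(n)$ stall when necessary), but note that your fix in the final paragraph (``$K$ is a closed disc with a single tame whisker'') silently drops the summand $K_{m(n)}$ from $K$, so the exhaustion $\bigcup D_n=Y$ is left unaddressed.

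The real gap is the one you yourself flag. Once you replace $\beta=f\circ\delta$ by a simple arc from $v_{n+1}$ to $\partial D_n$, you must still arrange that its lift starting at $c_{n+1}$ lands on $\partial\tilde D_n$; the phrase ``$z\mapsto z^2$ plus Schoenflies normalisations'' is not an argument for this. The paper's slit-removal sidesteps this difficulty completely: when $v$ is a critical value whose critical preimage is \emph{not} in $\tilde U$, every preimage of $\gamma_v$ inside $\tilde U$ is an arc from a simple preimage of $v$ to $\partial\tilde U$, and removing such arcs cannot disconnect the critical points already captured---so no lift ever needs to be steered to a prescribed target. What your route requires is essentially the content of the paper's \emph{next} proposition (connecting a chosen boundary point of $\tilde D_n$ to the new critical point via a controlled arc), which the paper does prove, but does not need for the present statement; you have front-loaded a difficulty that the paper's approach postpones.
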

\begin{proof}
    Without loss of generality, we can assume that $X=Y=\C$ and that $c=f(c)=0$. 

  \begin{claim}
   Let $R>0$. Then there is a bounded Jordan domain $U\supset B(0,R)$ such that $\partial U \cap \CV(f)=\emptyset$ and 
   such that $\tilde{U}\defeq \comp_0(f^{-1}(U))$ satisfies
     \[ f(\tilde{U}\cap \CP(f)) = U \cap \CV(f). \] 
  \end{claim} 
 \begin{subproof}
  Let $V$ be a ball around $0$ such that
    \[ V\supset \comp_0(f^{-1}(B(0, R)) \cup (f^{-1}( B(0,R)) \cap C(f)). \] 
      Note that this is possible since $B(0,R) \cap \CV(f)$ is finite, and each point in this set
      only has one preimage in $\CP(f)$.
      Let $U^1$ be a Jordan domain containing $\overline{f(V)}$, and $\tilde{U}^1\defeq \comp_0(f^{-1}(U^1))$. 

   For each critical value $v\in U^1$ whose critical preimage is not in $\tilde{U}^1$, choose an arc 
    $\gamma_v\subset U^1 \setminus \overline{B(0,R)}$ 
    connecting $v$ to $\partial U^1$, in such a way that $\gamma_v$ contains no other critical values and
    arcs for different critical  values
    are disjoint. Form the simply-connected domain
    $U^2 \defeq U^1\setminus \bigcup \gamma_v \supset \overline{B(0,R)}$ and set
     $\tilde{U}^2\defeq \comp_0(f^{-1}(U^2))$. Note that, for each critical value $v$ chosen as above, any preimage of $v$ in $\tilde{U}^1$ is not a critical point, and so any preimage component of an 
     arc $\gamma_v$ in $\tilde{U}^1$ 
     is an arc $\tilde{\gamma}_v$ connecting a simple preimage of $v$ to $\partial \tilde{U}^1$. In particular,
     $\tilde{U}^2 = \tilde{U}^1 \setminus \bigcup \tilde{\gamma}_v$.
     
    Then 
    $U^2$ contains $\overline{B(0,R)}$, and the preimage component $\tilde{U}^2$ of $f^{-1}(U^2)$ contains
     all critical preimages of critical values in $U^2$. 
    Choosing a Jordan domain $U\subset U^2$ slightly smaller than $U^2$, if necessary, we  can ensure that
     $\partial U \cap  \CV(f) = \emptyset$. This completes the proof of the claim.
 \end{subproof}
  
    Now we use the claim to construct a subsequence $(D_{n_j})$ of the desired sequence $(D_n)$ inductively,
    as follows. Set $n_1 \defeq 1$, and let $D_1$ be any small disc around $0$ not containing any other
    critical values in its closure. If $n_j$ and $D_{n_j}$ have been defined, let $R\geq j$ be sufficiently large that 
     $\overline{D_{n_j}}\subset B(0,R)$. Then apply the claim to obtain a domain $U\supset \overline{D_{n_j}}$. 
     Set $n_{j+1}\defeq \# (U\cap \CV(f))$, and $D_{n_{j+1}}\defeq U$. Then the domains
    $(D_{n_j})$ satisfy all requirements in the statement of the proposition. It remains to define
     $D_n$ for the remaining values of $n$.

    We next construct the domains $D_n$ for $n_j<n<n_{j+1}$ by removing thin slits containing critical values 
    in $U_{n_{j+1}}$. More precisely, suppose that
    $N_1,N_2$ are such that $N_1<N_2-1$, $D_{N_1}$ and  $D_{N_2}$ have been
    defined, but $D_n$ has not yet been defined for $N_1<n<N_2$. Set $n\defeq N_2-1$, and construct
    $D_n$ from $D_{N_2}$ as follows. For each critical  value $v$ in  $D_{N_2}\setminus D_{N_1}$, let
    $\gamma_v$ again be an arc connecting $v$ to  $\partial D_{N_2}$, not intersecting $\partial D_{N_1}$, and
    such  that different arcs are pairwise disjoint. 

    For each such $v$, 
    let $\tilde{\gamma}_v = \comp_{c(v)}(f^{-1}(\gamma_v))$, where $c(v)$ is the unique critical point of $f$ 
    over $v$. Then $\tilde{\gamma}_v$ is a cross-cut of $\tilde{D}_{N_2}$, not intersecting $\tilde{D}_{N_1}$. 
    At least one  of these crosscuts, say $\tilde{\gamma}_{v_0}$, does not separate 
       $\tilde{D}_{N_1}$ from
    any other $\tilde{\gamma}_v$. Observe that any other component of $f^{-1}(\gamma_{v_0})$ in $\tilde{D}_{N_2}$ is an arc
    connecting $\partial \tilde{D}_{N_2}$ to some simple preimage of $v_0$, and hence does not 
    disconnect $\tilde{D}_{N_2}$. 

    Set $U\defeq D_{N_2}\setminus \gamma_{v_0}$ 
    and $\tilde{U}\defeq \comp_{\tilde{D}_{N_1}}(f^{-1}(U))$. 
    Then, by construction, $\tilde{U}$ contains $n=N_2-1$ critical points.
    Slightly shrinking $U$ if necessary, we obtain a disc $D_n$ with $D_{N_1}\subset D_n\subset D_{N_2}$, satisfying the
    desired properties. Proceeding inductively, we define $D_n$ for all $n\geq 1$. 
\end{proof}

\begin{prop}[Connecting to the critical point]\label{prop:alpha}
 Let $f\colon X\to Y$ be vanilla and let 
 $(D_n)_{n=1}^{\infty}$ 
    and $(\tilde{D}_n)_{n=1}^{\infty}$ be sequences as in the statement of Proposition~\ref{prop:Dn}.
    Fix 
    $n\geq  1$ and $\zeta\in \partial \tilde{D}_n$.

   Then there is a Jordan arc $\tau$ connecting $f(\zeta)$ to $\partial D_{n+1}$ and
     lying in the annulus  $D_{n+1}\setminus \overline{D_n}$ apart from its endpoints, such that $\comp_{\zeta}(f^{-1}(\tau))$ contains the unique critical point $c'$ of $f$ in
      $\tilde{D}_{n+1}\setminus \tilde{D}_n$.  
\end{prop}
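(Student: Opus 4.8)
The plan is to reduce to constructing only the first half of $\tau$. First note that, by the definition of a vanilla function together with Proposition~\ref{prop:Dn}, $\tilde D_{n+1}$ contains exactly $n+1$ critical points and $\tilde D_n$ exactly $n$ of them, so $\tilde D_{n+1}\setminus\tilde D_n$ contains a single critical point $c'$, whose critical value $v':=f(c')$ is the unique critical value lying in the open annulus $A:=D_{n+1}\setminus\overline{D_n}$; also $f(\zeta)\in\partial D_n$, since $f$ maps $\partial\tilde D_n$ into $\partial D_n$. It then suffices to produce a Jordan arc $\tau_a$ from $f(\zeta)$ to $v'$ with $\tau_a\setminus\{f(\zeta)\}\subset A$ and $\tau_a\cap\CV(f)=\{v'\}$, whose lift starting at $\zeta$ terminates at $c'$: one may then attach to $\tau_a$ any Jordan arc $\tau_b$ from $v'$ to $\partial D_{n+1}$ with interior in $A\setminus\{v'\}$ meeting $\tau_a$ only at $v'$ (such $\tau_b$ plainly exists), put $\tau:=\tau_a\cup\tau_b$, and observe that the $\zeta$-lift of $\tau_a$ is a connected subset of $f^{-1}(\tau)$ joining $\zeta$ to $c'$, so $c'\in\comp_\zeta(f^{-1}(\tau))$.

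The structural input I would use is a degree count. Since $\tilde D_n$ and $\tilde D_{n+1}$ carry finitely many critical points, the first Proposition of Section~\ref{sect:preliminary} shows that $f\colon\tilde D_n\to D_n$ and $f\colon\tilde D_{n+1}\to D_{n+1}$ are proper branched coverings; as these are maps of discs carrying $n$ and $n+1$ simple critical points, respectively, the Riemann--Hurwitz formula gives them degrees $n+1$ and $n+2$. Hence $f\colon\partial\tilde D_n\to\partial D_n$ is a covering of circles of degree $n+1$, so every point $q\in A$ close enough to $\partial D_n$ has exactly $n+1$ preimages in $\tilde D_{n+1}$ inside a thin collar of $\partial\tilde D_n$, together with exactly one further preimage elsewhere in $\tilde D_{n+1}$; moreover $v'$ has, besides the critical preimage $c'$ (of local degree $2$), exactly $n$ simple preimages in $\tilde D_{n+1}$.

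With this in hand I would build $\tau_a$ as a concatenation $\rho_j\cup\kappa$. Fix $q\in A$ close to $\partial D_n$ and a Jordan arc $\kappa$ from $q$ to $v'$ with interior in $A\setminus\{v'\}$, staying away from $\partial D_n$ except near $q$. Lifting the portion of $\kappa$ before $v'$ from each of the $n+2$ preimages of $q$ yields $n+2$ disjoint lifts; using that $f$ is a local homeomorphism near the $n$ simple preimages of $v'$ and the squaring map near $c'$, these lifts extend continuously to their endpoints, and exactly two of them end at $c'$. As at most one of these two lifts can start at the single preimage of $q$ outside the collar of $\partial\tilde D_n$, at least one of them starts at a preimage $q_{j^*}$ of $q$ inside that collar. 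For each $j\in\Z$ let $\rho_j$ be a Jordan arc from $f(\zeta)$ to $q$ contained, apart from $f(\zeta)$, in a thin collar of $\partial D_n$ and winding $j$ times around $A$; its lift starting at $\zeta$ stays in the collar of $\partial\tilde D_n$, and since $f$ restricts there to a degree-$(n+1)$ covering of collars, the endpoints of these lifts run through all $n+1$ preimages of $q$ in the collar as $j$ varies. Choosing $j$ with the lift of $\rho_j$ ending at $q_{j^*}$, and drawing $\rho_j$ and $\kappa$ so that they meet only at $q$ (possible since $\rho_j$ hugs $\partial D_n$ while $\kappa$ leaves $q$ away from it), I obtain $\tau_a:=\rho_j\cup\kappa$, a Jordan arc whose $\zeta$-lift is $\rho_j$'s lift followed by $\kappa$'s lift from $q_{j^*}$, hence ends at $c'$.

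The conceptual point — and where I would spend the most thought — is that the endpoint of the $\zeta$-lift of $\tau_a$ cannot be prescribed directly; prepending a spiral $\rho_j$ of freely chosen winding number and then using that only $n+1$ of the $n+2$ relevant preimages accumulate on $\partial\tilde D_n$ turns the problem into a one-line counting argument that produces $c'$. The remaining work is routine but must be done carefully: checking properness so that all lifts are defined on closed intervals and stay in the correct collars, using the normal form at $c'$ to see that exactly two lifts end there, and verifying that $\tau_a$, and hence $\tau$, really is a Jordan arc meeting $\CV(f)$ only at $v'$.
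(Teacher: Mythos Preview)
Your argument is correct and follows essentially the same route as the paper's: both use the Riemann--Hurwitz degree count to see that $f\colon\partial\tilde D_n\to\partial D_n$ has degree $n+1$ while $\tilde D_{n+1}$ contains one further degree-one preimage of $D_n$, and both reach the prescribed endpoint $\zeta$ by composing with a freely chosen number of twists near $\partial D_n$ (your spirals $\rho_j$ are exactly the paper's Dehn twists). The one substantive difference is that the paper asserts that of the two preimages of $f(\zeta)$ in $\comp_{c'}(f^{-1}(\tau_0))$, precisely one lies on $\partial\tilde D_n$ and the other on the boundary of the conformal preimage $U$ of $D_n$; you bypass this by the pigeonhole observation that at most one of the two lifts ending at $c'$ can start at the single preimage of $q$ outside the collar, which is enough for the cycling argument and is a shade more elementary.
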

\begin{proof}
   Note that $f\colon \tilde{D}_n\to D_n$ has degree $n+1$, while $f\colon \tilde{D}_{n+1}\to D_{n+1}$
    has degree $n+2$, by the Riemann-Hurwitz formula. Hence $\tilde{D}_{n+1}$ contains 
    exactly one component $U$ of $f^{-1}(D_n)$ different from $\tilde{D}_n$, and $U$
    is mapped conformally to $D_n$. 

  Choose an arc $\tau_0$ connecting $f(\zeta)$ to $\partial D_{n+1}$ 
    lying in the annulus ${D}_{n+1}\setminus \overline{{D}_n}$
    and passing through the critical 
    value $f(c')$. Then $\comp_{c'}(f^{-1}(\tau_0))$ contains two points 
     of $f^{-1}(f(\zeta))$. One of these points must be the unique preimage of $\zeta$ on $\partial U$, while
     the other is some point 
    $\zeta_0\in \Xi \defeq f^{-1}(f(\zeta)) \cap \partial  D_n$.

   Now consider what happens when we apply $j$ Dehn twists to $\tau_0$ near 
      $\partial D_{n}$, 
     for $0 \leq j\leq n-1$, obtaining curves $\tau_j$. Each $\tau_j$ has the same properties as $\tau_0$ above; 
     let $\zeta_j$ be the unique element of 
     $\Xi$ in  $\comp_{c'}(f^{-1}(\tau_j))$. 
     Since  $f\colon \partial \tilde{D}_{n}\to \partial D_{n}$ is a 
     degree $n$ covering map of circles, the values of $\zeta_j$ cycle through all the elements of 
     $\Xi$. Hence, there is some $j$ such that $\zeta_j = \zeta$, and we can take $\tau=\tau_j$.
\end{proof}

 As mentioned in the previous section, it follows that any two vanilla functions are topologically
    equivalent. Although we will not use this fact directly in the following, we give a proof for completeness.
		
\begin{cor}[Topological uniqueness]
  Suppose that $f^1\colon X^1\to Y^1$ and $f^2\colon X^2\to Y^2$ are vanilla.
    Then there are homeomorphisms $\phi\colon X^1\to X^2$ and  $\psi\colon Y^1\to Y^2$ such that
    $\psi\circ f^1 = f^2 \circ \phi$.
\end{cor}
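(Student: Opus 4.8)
The statement to prove is the Corollary on topological uniqueness of vanilla functions. Let me sketch a proof plan.

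\bigskip

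The plan is to build the conjugating homeomorphisms $\phi$ and $\psi$ by an exhaustion argument, using the increasing sequences of Jordan domains supplied by Proposition~\ref{prop:Dn} for each of $f^1$ and $f^2$. Concretely, fix critical points $c^1$ of $f^1$ and $c^2$ of $f^2$, and let $(D_n^1,\tilde D_n^1)$ and $(D_n^2,\tilde D_n^2)$ be the associated sequences. Since each $D_n^i$ contains exactly $n$ critical values, and by the Riemann--Hurwitz computation in Proposition~\ref{prop:alpha} the restriction $f^i\colon \tilde D_n^i\to D_n^i$ is a degree-$(n+1)$ branched cover with all $n$ branch points simple and lying over distinct critical values, the two restrictions $f^1\colon \tilde D_n^1\to D_n^1$ and $f^2\colon \tilde D_n^2\to D_n^2$ are branched covers of discs with combinatorially identical branching data. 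A classical fact (uniqueness of branched covers of the disc with prescribed branch behaviour, essentially via lifting paths / the Riemann existence theorem in the topological category) then gives homeomorphisms $\psi_n\colon \overline{D_n^1}\to\overline{D_n^2}$ and $\phi_n\colon \overline{\tilde D_n^1}\to\overline{\tilde D_n^2}$ with $\psi_n\circ f^1=f^2\circ\phi_n$, and we may normalise $\psi_n(f^1(c^1))=f^2(c^2)$, $\phi_n(c^1)=c^2$.

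\bigskip

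The key issue is \emph{compatibility}: one wants $\psi_{n+1}$ to restrict (after isotopy) to $\psi_n$ on $\overline{D_n^1}$, and likewise for $\phi$. Here I would use Proposition~\ref{prop:alpha}, which is tailored to exactly this purpose: it says that the single new critical point $c'$ of $f^i$ in $\tilde D_{n+1}^i\setminus\tilde D_n^i$ can be joined, via a lift of an arc in the annulus $D_{n+1}^i\setminus\overline{D_n^i}$, to any prescribed boundary point of $\tilde D_n^i$. This lets me extend a given homeomorphism on $\overline{D_n^i}$ across the annular collar $\overline{D_{n+1}^i}\setminus D_n^i$: cut the annulus $D_{n+1}^1\setminus\overline{D_n^1}$ along such an arc $\tau^1$ (and its lifts), do the same on the $f^2$ side along the corresponding arc $\tau^2$, and observe that after cutting, both restrictions become (unbranched) covers of a disc, whose conjugating homeomorphism, matching the already-constructed one on $\partial D_n^i$, exists and is essentially unique up to isotopy rel boundary. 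Performing this step inductively, and passing to the direct limit (using $\bigcup_n D_n^i = Y^i$ and $\bigcup_n \tilde D_n^i = X^i$, the latter because $f^i\colon \tilde D_n^i\to D_n^i$ is proper of degree $n+1\to\infty$, so the $\tilde D_n^i$ exhaust a full preimage which, $X^i$ being connected, is all of $X^i$), yields the global homeomorphisms $\psi=\lim\psi_n$ and $\phi=\lim\phi_n$ with $\psi\circ f^1=f^2\circ\phi$.

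\bigskip

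The main obstacle I anticipate is making the inductive compatibility genuinely work rather than merely plausible: at stage $n+1$ one has freedom in choosing $\psi_{n+1}$ on the new annulus, and one must verify that this freedom suffices to match the \emph{fixed} $\psi_n$ on $\partial D_n^1$ while still being realised by a valid $\phi_{n+1}$ upstairs. This is where Proposition~\ref{prop:alpha} is essential — it guarantees that the lifted cutting arc can be made to land at whichever boundary point of $\tilde D_n^i$ is forced by the downstairs data, so the combinatorics on the two sides can be aligned. Once the cutting arcs are chosen compatibly, the remaining construction is the standard ``build a homeomorphism between two branched/unbranched covers of a disc with matching boundary and branch data'' argument, which is routine; I would either cite it or sketch it via lifting a triangulation. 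I would keep the write-up brief since, as the authors note, this corollary is not used elsewhere in the paper.
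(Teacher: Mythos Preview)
Your proposal is correct and follows essentially the same approach as the paper: build $\psi_n,\phi_n$ inductively on the exhaustion $(D_n,\tilde D_n)$, using Proposition~\ref{prop:alpha} at each stage to align the single new critical value/point across the annular collar, then pass to the limit. One small remark: your justification that $\bigcup_n \tilde D_n^i = X^i$ via ``degree $\to\infty$'' is not quite an argument; the clean reason is that any path in $X^i$ from $c^i$ to a given point has compact image under $f^i$, hence lies in some $D_N^i$, and so the path (being connected and containing $c^i$) lies in $\tilde D_N^i$.
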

\begin{proof}
  Let $(D^j_n)_{n=1}^{\infty}$ and $(\tilde{D}^j_n)_{n=1}^{\infty}$ be as in Proposition~\ref{prop:Dn},
   for $j=1,2$. We inductively specify homeomorphisms
       \[ \psi_n \colon \closure(D_n^1)\to \closure(D_n^2)
       \quad\text{and}\quad 
       \phi_n \colon \closure(\tilde{D}_n^1)\to \closure(\tilde{D}_n^2)\]
    such that 
   \begin{equation}\label{eqn:equivalence}
     \psi_n \circ f^1 = f^2 \circ \phi_n,
   \end{equation}
    $\psi_{n+1}|_{\closure(D_n^1)} = \psi_n$, and 
    $\phi_{n+1}|_{\closure(\tilde{D}_n^1)} = \phi_n$. The claim follows by defining 
    $\psi$ and $\phi$ to be the common extension of the maps $\psi_n$ and $\phi_n$, respectively. 

   Let $\psi_1\colon \closure(D_1^1)\to \closure(D_1^2)$ be any homeomorphism that fixes $0$,
     and let $\phi_1$ be either of the two lifts of $\psi_1$, such that $\psi_1\circ f^1 =  f^2\circ \phi_1$. 
 
    Now suppose that the homeomorphisms $\psi_n$ and $\phi_n$ have already been defined.
     Fix any point $\zeta^1_n\in \partial \tilde{D}_n^1$, and set 
     $\zeta^2_n\defeq \phi_n(\zeta^1_n)$. Applying Proposition~\ref{prop:alpha} to 
     $f^1$ (with 
     $\zeta = \zeta^1_n$) and to $f^2$ (with $\zeta=\zeta^2_n$), we obtain Jordan arcs $\tau_n^1$ and $\tau_n^2$. 
     We extend $\psi_n$ to a homeomorphism $\psi_{n+1}$ from
     $\closure(D_{n+1}^1)$ to $\closure(D_{n+1}^2)$, in such a way that $\psi_{n+1}$ maps $\tau_n^1$ to $\tau_n^2$,
     taking the critical value of $f^1$ in $\tau_n^1$ to the critical value of $f^2$ in $\tau_n^2$. 
     It is now straightforward to verify that there is a lift $\phi_{n+1}$, extending $\phi_n$, 
     satisfying~\ref{eqn:equivalence}. 
\end{proof}

%
%
%
%

\section{Proof of Theorem~\ref{thm:maintopological}}
\label{sect:construction}

As mentioned in Section~\ref{sect:overview}, we consider certain triples of Jordan domains to 
   facilitate the construction. 
These are defined as follows.

\begin{defn}[Partial configuration]
\label{def:partialconf}
  Let $f\colon X\to Y$ be vanilla. A triple $(D,U,V)$ of Jordan domains in $Y$ 
    is called a \emph{partial configuration} for $f$ if all 
the following hold:
   \begin{enumerate}[(a)]
     \item $\overline{U}\cap \overline{V}=\emptyset$;
      \item $\overline{U} \cup \overline{V} \subset D$;
      \item $D\cap \CV(f)\subset U\cup V$; 
      \item there is a component $\tilde{D}$ of $f^{-1}(D)$ such that 
          $f(\tilde{D}\cap \CP(f))=D\cap \CV(f)$;
      \item there are components $\tilde{U}$ and $\tilde{V}$ of $f^{-1}(U)$ and $f^{-1}(V)$, respectively, such that
            $\tilde{D}\cap \CP(f) \subset \tilde{U}\cup\tilde{V}$. 
   \end{enumerate}
\end{defn}

These partial configurations are easy to work with because their preimages have 
  a very specific and extremely simple structure, as described in the following proposition. 
  This structure is illustrated in Figure~\ref{fig:1}. In this proposition, and subsequently, if 
  $U$ is a domain, then we denote by $m_U$ the number of critical values of $f$ in~$U$.
  (Of course $m_U=m_U(f)$ depends on the function $f$, but we suppress $f$ from the notation since it will be fixed whenever this notation is used.)

\begin{prop}[Structure of partial configurations]
\label{prop:picture}
  Let $f\colon X\to Y$ be vanilla, and suppose that  $(D,U,V)$ is a partial configuration. 
   Let $\gamma$ be a crosscut of $D$ that
   separates $\overline{U}$ from $\overline{V}$. Let $z$ and $w$ be its endpoints on $\partial D$, labeled such that
    $U$ is on the left of $\gamma$  and  $V$ is  on the right of  $\gamma$ when $\gamma$ is oriented from
    $z$ to~$w$.  

    Label the 
    preimages $\tilde{z}_1,\dots, \tilde{z}_d$ of $z$ on  $\partial \tilde{D}$ 
    and the preimages $\tilde{w}_1,\dots,\tilde{w}_d$ of $w$ on  $\partial \tilde{D}$
    in positive orientation such that $\tilde{w}_1$ is between $\tilde{z}_1$ and $\tilde{z}_2$; here $d = m_U + m_V + 1$ is the degree of $f$ on $\tilde{D}$. This determines
    the labeling up to  the choice of $\tilde{z}_1$.

    Then the choice of $\tilde{z}_1$ can be made in  such  a way  that the 
    preimage components of $f^{-1}(\gamma)\cap \tilde{D}$ connect these preimages as follows.  
  \begin{enumerate}[(a)]
    \item $\tilde{z}_1$ is connected to $\tilde{w}_{m_V+1}$. 
    \item For $j=1,\dots,m_V$, $\tilde{w}_j$ is connected to  $\tilde{z}_{j+1}$. \label{item:Ucomponents}
    \item For  $j=m_V+2,\dots,d$, $\tilde{z}_j$ is connected to  $\tilde{w}_j$.\label{item:Vcomponents} 
    \item Each component of $\tilde{D}\setminus f^{-1}(\gamma)$ (which we shall call a \emph{face}) 
      contains either exactly one component of 
      $f^{-1}(U)$ or exactly one component of $f^{-1}(V)$. 
    \item The faces containing 
      $\tilde{U}$ and $\tilde{V}$ are adjacent to each other, and separated by the arc connecting
      $\tilde{z}_1$ and $\tilde{w}_{m_V+1}$. More precisely, the face containing $\tilde{U}$ is 
       bounded by this arc and those in~\ref{item:Vcomponents}, while that containing $\tilde{V}$ is
       bounded by this arc and those in~\ref{item:Ucomponents}. 
     \item Each connected component of $f^{-1}(U)$ in $\tilde{D}$, apart from $\tilde{U}$, is contained in a face bounded 
        solely by one of the arcs from~\ref{item:Ucomponents}. Similarly, each component
     of $f^{-1}(V)$ in $\tilde{D}$, apart from  $\tilde{V}$, is contained in a face bounded by one  of the 
      arcs from~\ref{item:Vcomponents}.
  \end{enumerate} 
\end{prop}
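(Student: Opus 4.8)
The plan is to analyse the subdivision of $\tilde D$ cut out by $f^{-1}(\gamma)$: first I would pin down its combinatorial type, then read the labelling off from an associated dual tree.

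\textbf{Setup.}
Since $(D,U,V)$ is a partial configuration, $\tilde D\cap\CP(f)$ is finite, so $f\colon\tilde D\to D$ is a proper branched covering. Its critical points are simple, and combining Definition~\ref{def:partialconf} with the vanilla hypotheses, the critical points of $f$ in $\tilde D$ are precisely the (unique) critical preimages of the $m_U+m_V$ critical values of $f$ in $D$; moreover the $m_U$ critical points over $U$ all lie in $\tilde U$ and the $m_V$ over $V$ all lie in $\tilde V$, since a point of $\tilde V$ maps into $V$ and so cannot map to $U$, and conversely. By Riemann--Hurwitz, $f|_{\tilde D}$ has degree $d=m_U+m_V+1$. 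Since $\gamma$ separates $\overline U$ from $\overline V$ and $D\cap\CV(f)\subset U\cup V$, neither $\overline\gamma$ nor $\partial D$ carries a critical value. Write $D_U\supset U$ and $D_V\supset V$ for the two components of $D\setminus\gamma$, so that $D_U\cap\CV(f)=U\cap\CV(f)$ and $D_V\cap\CV(f)=V\cap\CV(f)$, and let $\beta_U,\beta_V\subset\partial D$ be the corresponding boundary arcs.

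\textbf{Face structure and part (d).}
As $\overline\gamma$ carries no critical value, $f^{-1}(\overline\gamma)\cap\overline{\tilde D}$ is a disjoint union of $d$ arcs, each mapped homeomorphically onto $\overline\gamma$; openness and properness of $f$ make these crosscuts of $\tilde D$ joining the $d$ preimages of $z$ on $\partial\tilde D$ to the $d$ preimages of $w$, and (lifting the fact that $\partial D$ meets $z$ and $w$ alternately) these $2d$ points interleave around $\partial\tilde D$. The $d$ disjoint crosscuts cut $\tilde D$ into $d+1$ \emph{faces}, each of which is a full component of $f^{-1}(D_U)\cap\tilde D$ or of $f^{-1}(D_V)\cap\tilde D$, and hence is mapped properly onto $D_U$ or onto $D_V$; a $D_U$-face contains no component of $f^{-1}(V)$, and conversely. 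Since $\tilde U$ contains all $m_U$ critical points lying over $D_U$, the face $F_U\supset\tilde U$ carries all of them while every other face over $D_U$ is unramified; a Riemann--Hurwitz count then gives $\deg(f|_{F_U})=m_U+1$, degree $1$ on the remaining $D_U$-faces, and hence (the degrees summing to $d$) exactly $m_V+1$ faces over $D_U$. Symmetrically for $F_V\supset\tilde V$. A further Riemann--Hurwitz count inside $F_U$ --- its components of $f^{-1}(U)$ have degrees summing to $m_U+1$ and together carry all $m_U$ critical points of $F_U$ --- forces $f^{-1}(U)\cap F_U$ to be connected, hence equal to $\tilde U$; and a degree-one face obviously contains exactly one component of $f^{-1}(U)$. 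This proves (d).

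\textbf{The dual tree.}
Regard the faces and crosscuts as dual to a graph $T$ with the $d+1$ faces as vertices and the $d$ crosscuts as edges. Then $T$ is connected (join points of two faces by an arc in $\tilde D$ and count its crossings with the crosscuts) with $d+1$ vertices and $d$ edges, hence a tree, and each edge joins a $D_U$-face to a $D_V$-face, since the two sides of a crosscut lie over the two sides of $\gamma$. Every face over $D_U$ other than $F_U$, and every one over $D_V$ other than $F_V$, is a degree-one face bounded by a single crosscut --- a leaf of $T$ --- and (for $d>1$) each leaf is adjacent to a vertex of degree at least $2$, necessarily $F_U$ or $F_V$. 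Counting incidences --- $\deg_T F_U=m_U+1$ because $\partial F_U$ covers $\partial D_U$ with degree $m_U+1$ and so meets exactly $m_U+1$ crosscuts --- shows that $F_U$ is joined to $F_V$ and to all $m_U$ leaves over $D_V$, and $F_V$ is joined to $F_U$ and to all $m_V$ leaves over $D_U$; so $T$ is the ``double star'' with centres $F_U$ and $F_V$.

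\textbf{Reading off the labelling, and the main difficulty.}
For the last part I would use that $f\colon\partial F_V\to\partial D_V$ is a degree-$(m_V+1)$ covering of circles with $\partial D_V=\overline\gamma\cup\overline{\beta_V}$: this forces $\partial F_V$ to run alternately along $m_V+1$ full crosscuts and $m_V+1$ of the $\beta_V$-preimage arcs of $\partial\tilde D$ (a boundary piece over $\beta_V$ cannot extend across a $\beta_U$-preimage arc), and likewise $\partial F_U$ alternates $m_U+1$ crosscuts with $m_U+1$ $\beta_U$-preimage arcs. The bigons hanging off $F_V$ each occupy a single $\beta_U$-preimage arc, so the $m_V+1$ $\beta_V$-arcs of $\partial F_V$ are consecutive on $\partial\tilde D$; choosing $\tilde z_1$ to be the preimage of $z$ on the one crosscut that separates $F_U$ from $F_V$ makes these arcs run from $\tilde z_j$ to $\tilde w_j$ for $j=1,\dots,m_V+1$, so that $\partial F_V$ is bounded by the crosscuts from $\tilde w_j$ to $\tilde z_{j+1}$ ($1\le j\le m_V$) and the crosscut from $\tilde w_{m_V+1}$ to $\tilde z_1$ --- this is (b), and together with the matching crosscut on $\partial F_U$ it gives (a) and the $\tilde V$-clause of (e); the $m_V$ bigons hanging off these first crosscuts are the remaining $D_U$-faces, yielding the $f^{-1}(U)$-half of (f). Symmetrically, $\partial F_U$ is bounded by the crosscut from $\tilde z_1$ to $\tilde w_{m_V+1}$ together with the crosscuts from $\tilde z_j$ to $\tilde w_j$ ($m_V+2\le j\le d$) --- giving (c) and the $\tilde U$-clause of (e) --- and the $m_U$ bigons hanging off the latter are the remaining $D_V$-faces, yielding the $f^{-1}(V)$-half of (f). The only genuine obstacle is this final bookkeeping: confirming that the boundary-covering data together with the double-star adjacencies really pin down which preimages of $z$ and of $w$ each crosscut joins, with all cyclic orders and orientation conventions consistent. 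Everything else is a chain of by-then-routine Riemann--Hurwitz and planarity arguments.
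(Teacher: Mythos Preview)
Your proof is correct and follows essentially the same strategy as the paper: identify the faces as preimage components of $D_U$ and $D_V$, form the dual graph, observe it is a tree with exactly two non-leaf vertices (corresponding to $F_U$ and $F_V$) which must therefore be adjacent, and then read off the labelling after fixing $\tilde z_1$ on the separating crosscut. The only notable difference is in the proof of (d): where you use repeated Riemann--Hurwitz counts to show that $f^{-1}(U)\cap F_U$ is connected, the paper instead observes that the annulus $D_U\setminus\overline U$ contains no singular values, so each of its preimage components is an annulus, forcing each face to contain exactly one preimage component of $U$ or $V$; both arguments are short and equivalent in strength.
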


\begin{figure}
	\includegraphics[width=\textwidth]{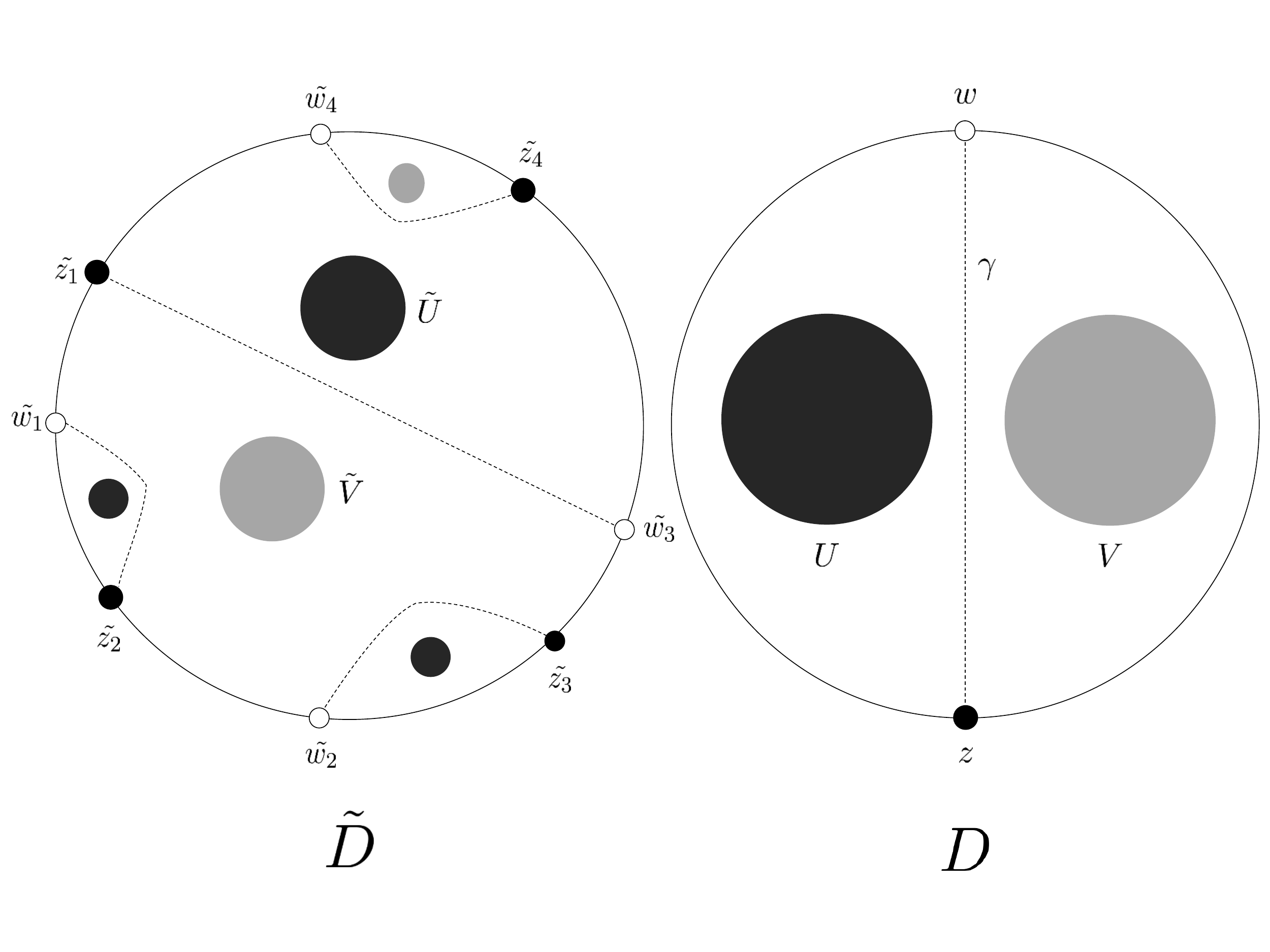}
  \caption{The initial preimage (left) and image (right) in Proposition~\ref{prop:picture}. 
   Note that the domains $U$ and $V$ may look very complicated geometrically, but topologically we are
     speaking simply of a configuration of two discs within a larger disc, as shown here. 
   Black discs on the left are preimage components of $U$, while grey discs are preimage components of
      $V$. For this example we have taken $m_U=1$ and $m_V=2$. We stress that, in general, $D$ will have other preimage components; these are not illustrated.}\label{fig:1}
\end{figure}

\begin{remark}
  We  allow the case where $m_U=0$ or $m_V=0$, in  which case 
    the preimage 
    domains $\tilde{U}$ and $\tilde{V}$ from Definition~\ref{def:partialconf} may not be unique. In this case,
    the claim should be understood as follows: for all valid choices of  $\tilde{U}$ and  $\tilde{V}$, there is a 
    choice of $\tilde{z}_1$ satisfying the properties listed.
\end{remark}
\begin{proof}
  Recall that all critical values of $f$ in $D$ lie in $U$ or $V$. Let $D_L$ be the component of $D\setminus \gamma$
     containing $U$. Then $A\defeq D_L\setminus \overline{U}$ is an annulus containing no singular values.
     Every component $\tilde{A}$ of $f^{-1}(A)$ is mapped as a finite-degree covering map, and hence
      is also an annulus. It follows that $\tilde{A}$ is a face with exactly one component of
      $f^{-1}(U)$ removed. Clearly we can apply the same argument to $V$, replacing $D_L$ by
      the other component $D_R$ of $D\setminus \gamma$.  Let us call preimage components of $D_L$ \emph{L-faces},
     and other faces \emph{R-faces}. 

     By the above, every face contains exactly one preimage component of $U$ or of $V$, and is mapped
     with the same degree as this component. It follows that the set $\tilde{U}$ from 
     Definition~\ref{def:partialconf} is contained in an  L-face of degree $m_U+1$, and there
     are $m_V$ further L-faces, all of which are simple. Similarly, $\tilde{V}$ is contained in 
     a an R-face of degree 
     $m_V+1$, and there are a further $m_U$ R-faces, all simple. 

   Consider the dual graph $G$ to this picture, where each face represents a vertex, and two faces
    are connected if they are adjacent; in other words if they have a common component of $f^{-1}(\gamma)$ in their boundary.
    By the remarks above, $G$ is a connected graph with $d$ edges and $m_U + m_V+2=d+1$ vertices. So $G$ is a tree. 
     Moreover, $G$ has one vertex of
     degree $m_U+1$ corresponding to  $\tilde{U}$, one of degree $m_V+1$, corresponding to  $\tilde{V}$, 
     and a further $d-1$ vertices, all of which are leaves (vertices of degree $1$). 

    Since $G$ is connected, the vertices representing $\tilde{U}$ and  $\tilde{V}$ must be connected by 
     a simple path in $G$, and since all other vertices are leaves, this path must in fact be an
     edge. This edge corresponds to some component of $f^{-1}(\gamma)$. We choose $\tilde{z}_1$ to be the
     preimage of $z$ contained in this component. It then follows that the description is indeed as above.
\end{proof}

We will deduce Theorem~\ref{thm:maintopological} from the following proposition, which is the crux of our construction.
\begin{prop}[Extending partial configurations]\label{prop:inductivestep}
  Let $f\colon X\to Y$ be vanilla, and let $(D_n)_{n=1}^{\infty}$ be the sequence from Proposition~\ref{prop:Dn}.  Suppose that $n\geq 1$ and that $(D_n, U, V)$ is a partial configuration for $f$. Set $n' = n + m_V$.
   
  Then there exists $U' \supsetneq U$, such that $(D_{n'} , U' , V)$ is 
    a partial configuration, and such that $\tilde{U}'$ contains all the connected components of 
     $f^{-1}(U)\cap \tilde{D}_n$.
\end{prop}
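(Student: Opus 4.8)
The plan is to build $U'$ from $U$ by attaching a single thin ``snake'' $\sigma$ --- a Jordan arc thickened slightly into a thin Jordan domain --- that starts at a point on $\partial U$, travels through the annulus $D_{n'}\setminus\overline{D_n}$ winding around $\overline V$ and around all the $m_V$ critical values of $f$ lying in $D_{n'}\setminus D_n$, and finally engulfs each of these critical values by a small excursion. Set $U'$ to be the union of $U$ with this thickened snake (shrunk slightly so that $\partial U'\cap\CV(f)=\emptyset$ and $\overline{U'}\cap\overline V=\emptyset$, which is possible since $\overline U\cap\overline V=\emptyset$ and the critical values are discrete). By construction $U'\supsetneq U$ is a Jordan domain with $\overline{U'}\cup\overline V\subset D_{n'}$, and $D_{n'}\cap\CV(f)=(D_n\cap\CV(f))\cup(\{$the $m_V$ new critical values$\})\subset U\cup V\cup(U'\setminus U)\subset U'\cup V$, so conditions (a)--(c) of Definition~\ref{def:partialconf} for $(D_{n'},U',V)$ are immediate. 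Note $m_{U'}=m_U+m_V$ and $m_V$ is unchanged, so the new degree on $\tilde D_{n'}$ is $n'+1=n+2m_V+1$, consistent with Riemann--Hurwitz since $D_{n'}$ has $n'=n+m_V$ critical values and $\tilde D_{n'}$ has $n'+1$ critical points.

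The heart of the argument is to locate the preimage component $\tilde U'$ and verify conditions (d) and (e), i.e.\ that some component $\tilde U'$ of $f^{-1}(U')$ together with $\tilde V$ (the old preimage component of $V$) contains all of $\tilde D_{n'}\cap\CP(f)$, and that $\tilde U'$ swallows every component of $f^{-1}(U)\cap\tilde D_n$. Here I would use the structural description of $f^{-1}$ over a partial configuration. The old configuration $(D_n,U,V)$ gives, via Proposition~\ref{prop:picture} applied to a crosscut $\gamma$ of $D_n$ separating $\overline U$ from $\overline V$, the tree picture: one big L-face containing $\tilde U$ of degree $m_U+1$, $m_V$ simple L-faces (the spurious preimages of $U$), one big R-face containing $\tilde V$ of degree $m_V+1$, $m_U$ simple R-faces, with $\tilde U$ and $\tilde V$ adjacent along the distinguished arc. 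Now the snake $\sigma$ lives in $D_{n'}\setminus\overline{D_n}$; I track its preimage components inside $\tilde D_{n'}$. Since $\sigma$ winds around $\overline V$, and $f\colon\partial\tilde D_n\to\partial D_n$ is a degree-$n$ covering (and more relevantly, the boundary of the R-face carrying $\tilde V$ maps with degree $m_V+1$), a lift of $\sigma$ starting from the lift of the attaching point on $\partial\tilde U$ will, after threading around $\tilde V$, come back and meet the $m_V$ spurious preimage-components of $U$ in $\tilde D_n$ one after another; when $\sigma$ finally grabs the $m_V$ new critical values, each of the two preimages of each such critical value is reached --- one by the critical preimage $c(v)$, which lies in $\tilde D_{n'}\setminus\tilde D_n$. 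The key point, exactly as in Proposition~\ref{prop:alpha}, is that by choosing the homotopy class of $\sigma$ (how many Dehn twists it performs near $\partial D_n$), one can arrange that the single lift $\tilde\sigma$ emanating from $\tilde U$ hits, in turn, every spurious $U$-component in $\tilde D_n$ and every critical preimage $c(v)$ of the new critical values. Thus $\tilde U'\defeq\comp_{\tilde U}(f^{-1}(U'))$ contains $\tilde U$, all $m_V$ spurious $U$-components in $\tilde D_n$ (hence all of $f^{-1}(U)\cap\tilde D_n$, which by Proposition~\ref{prop:picture}(f) consists of exactly $\tilde U$ and those $m_V$ faces), and all $m_V$ new critical points.

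From there, condition (e) follows: $\tilde D_{n'}\cap\CP(f)$ consists of the $n+1$ old critical points of $\tilde D_n$ --- of which $\{$those over values in $U\}\subset\tilde U\subset\tilde U'$ and $\{$those over values in $V\}\subset\tilde V$ --- together with the $m_V$ new critical points over the new critical values, all of which we have placed in $\tilde U'$; so $\tilde D_{n'}\cap\CP(f)\subset\tilde U'\cup\tilde V$. And condition (d), that $f(\tilde D_{n'}\cap\CP(f))=D_{n'}\cap\CV(f)$, holds because $\tilde D_{n'}\supset\tilde D_n$ already accounts for all old critical values, while the new critical points map precisely onto the $m_V$ new critical values, and the Riemann--Hurwitz count above shows there are no others. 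Finally one checks $U'$ is still a Jordan domain and that, after the slight shrinking, $\partial U'$ avoids $\CV(f)$ and $\overline{U'}\cap\overline V=\emptyset$.

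I expect the main obstacle to be the bookkeeping in the previous paragraph: making precise \emph{which} lift of the snake reaches \emph{which} preimage component, and showing that the Dehn-twist freedom genuinely suffices to pick up all of them simultaneously (not just cyclically one at a time as in Proposition~\ref{prop:alpha}). The cleanest way to handle this is probably to build the snake incrementally --- extend $U$ by a short arc to absorb one spurious $U$-component or one new critical value at a time, each time invoking a Proposition~\ref{prop:alpha}-style lifting/twisting argument to guarantee the correct lift is selected, and checking that at each stage one still has a partial configuration (with $V$ fixed and $U$ growing) so that Proposition~\ref{prop:picture} remains available --- and only at the end pass to $D_{n'}$. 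Care is also needed to keep the snake disjoint from $\overline V$ and from $\overline{D_n}$ except at the attaching point, and to ensure the arcs chosen for different new critical values stay pairwise disjoint, but these are routine general-position arguments.
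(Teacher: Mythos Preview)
Your overall approach matches the paper's: attach a thin channel $W$ to $U$ that winds around $\overline{V}$ and picks up the $m_V$ new critical values in $D_{n'}\setminus D_n$, then verify that $\tilde U'=\comp_{\tilde U}(f^{-1}(U'))$ swallows all of $f^{-1}(U)\cap\tilde D_n$ and the new critical points. The verifications of conditions (a)--(d) you outline are correct.

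Where your account diverges from the paper is the mechanism behind connectivity. You write that ``the single lift $\tilde\sigma$ emanating from $\tilde U$ hits, in turn, every spurious $U$-component''. This cannot happen: the snake touches $\partial U$ at a single point $\mu$, so each lift of the snake touches exactly one preimage component of $U$ at exactly one preimage of $\mu$. No choice of Dehn twists changes this. What the paper actually does is split the snake into two pieces, an arc $\Gamma$ (avoiding all critical values, winding exactly $m_V$ times around $\overline V$ and the new critical values) and an arc $\beta_m$ (connecting $w\in\partial D_n$ through all $m_V$ new critical values, built step by step via Proposition~\ref{prop:alpha}). There are then $m_V+1$ \emph{distinct} lifts of $\Gamma$ in $\tilde D_{n'}$ of interest---one starting on $\partial\tilde U$ and one starting on each spurious $U$-preimage---and the precise winding number $m_V$ is what makes each of these lifts terminate at a point of a \emph{single} connected component $\tilde\beta$ of $f^{-1}(\beta_m)$, namely the one containing all the new critical points. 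So connectivity comes from many separate arms meeting a common backbone through the critical points, not from one lift threading everything.

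Your proposed fallback (absorb one spurious component and one new critical value at a time, maintaining a partial configuration throughout) can be made to work, and is a reasonable alternative; but note that after each step $m_V$ is unchanged, so by Proposition~\ref{prop:picture} there are still $m_V$ spurious components of the enlarged $U$---one old spurious component has been absorbed but a new one has appeared in $\tilde D_{n+1}\setminus\tilde D_n$. You would need to track that the newly created spurious components contain no preimages of the \emph{original} $U$, which is true but requires saying. The paper's one-shot construction avoids this bookkeeping at the cost of the more delicate lift analysis above.
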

\begin{proof}
   For simplicity of notation, set $m\defeq m_V$. 
	Observe that if $m=0$, then $\tilde{U}$ is connected by Proposition~\ref{prop:picture}, and there is nothing
    to prove. So we may assume that $n'>n$. We must describe how $U$ is extended to $U'$.

      We apply Proposition~\ref{prop:picture} to 
      the partial configuration $(D_n,U,V)$, and use the notation given there in the following paragraphs. 
      We begin by constructing a curve $\alpha$ connecting the point $w\in\partial D_{n}$ to $\partial D_{n'}$
      in such a way that there is a curve contained in $f^{-1}(\alpha)$ connecting the point 
      $\tilde{w}_{m+1}$
      to $\partial \tilde{D}_{n'}$ and passing through all the critical points in 
      $\tilde{D}_{n'}\setminus{\tilde{D}_{n_{k}}}$.
      This construction proceeds as follows. First let $\tau$ be the arc obtained from Proposition~\ref{prop:alpha},
      with $\zeta= \zeta_1 \defeq \tilde{w}_{m+1}$. Then set $\alpha_1 = \tau$.
      
      Let $\omega_1$ be  the endpoint of  $\tau$ on  $\partial D_{n+1}$. Recall that all critical points
      of $f$ are simple. It follows that the component of $f^{-1}(\tau)$ 
      containing $\zeta_1$ contains two points of $f^{-1}(\omega_1)$, say $\tilde{\omega}_1^1$ and
      $\tilde{\omega}_1^2$. One of the two arcs of $\partial \tilde{D}_{n+1}\setminus\{\tilde{\omega}^1_1,\tilde{\omega}^2_1\}$ 
      does not contain any  other points of $f^{-1}(\omega_1)$. We may assume that the points are labeled such that this arc, when 
      oriented from $\tilde{\omega}_1^1$ to $\tilde{\omega}_1^2$, traverses $\partial \tilde{D}_{n+1}$ in positive orientation. 
      We now set $\tilde{\omega}_1 \defeq \tilde{\omega}_1^2$, and apply Proposition~\ref{prop:alpha} again, 
      with $\zeta=\zeta_2 \defeq \tilde{\omega}_{1}$, obtaining a curve $\alpha_2 = \tau$. 
      
      Continuing inductively, we construct a curve $\alpha=\alpha_1\cup\dots\cup \alpha_{m}$ connecting 
      $\partial D_{n}$ to $\partial D_{n'}$, and passing through the $m$ critical values in 
      $D_{n'}\setminus D_{n}$; see the right-hand side of Figure~\ref{fig:3}. This completes the
      construction of the curve $\alpha$.

      \begin{figure}
      	\includegraphics[width=\textwidth]{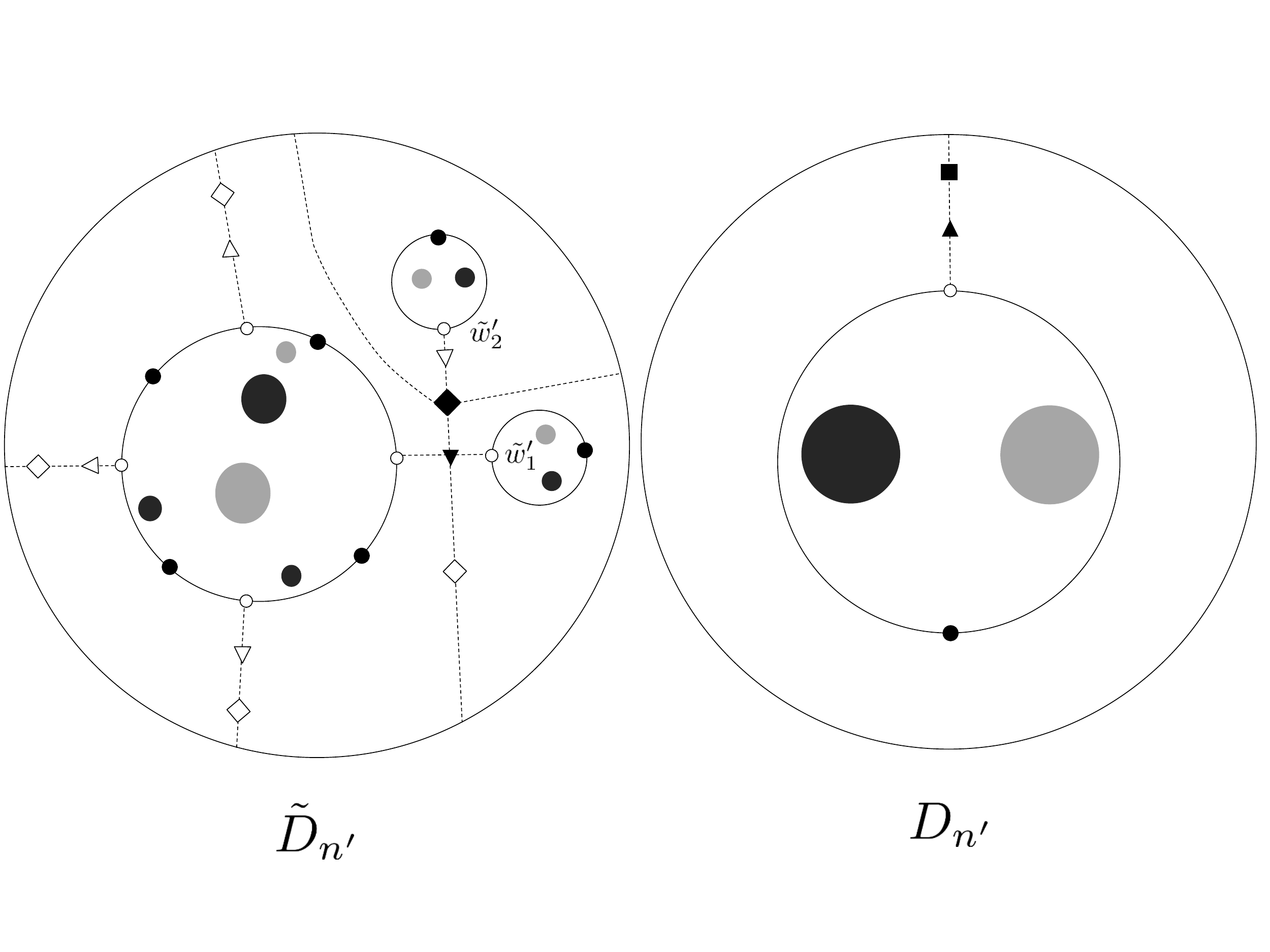}
        \caption{Application of Proposition~\ref{prop:inductivestep} to the configuration from Figure~\ref{fig:1}. 
    The larger disc $D_{n'}$ in the image (right) 
       has two additional critical points, shown as a triangle and a square, and a curve $\alpha$ indicated as a dashed line. 
       The preimage $\tilde{D}_{n'}$ (left) shows preimages of the critical values as triangles and squares, 
       with the solid figure indicating the preimage that is actually the critical point. 
        The preimage of the curve $\alpha$ is indicated as a dashed line. Note also two additional preimages of $w$, labeled $\tilde{w}'_{1}$ and $\tilde{w}'_{2}$, 
        each on the boundary of a new preimage of the original disc.}
			\label{fig:3}
      \end{figure}

      Let $v_1,\dots,v_{m}$ be  the critical values on $\alpha$, ordered as they are encountered
      when traversing $\alpha$ from $w$ to $\partial D_{n'}$. Let $c_1,\dots,c_{m}$ be the 
      corresponding critical points. Then, for each $j=1,\dots,m$, there is a simple preimage
      $\tilde{D}_{n}^j$ of $D_{n}$, attached to $c_j$ by a simple preimage of the  
      piece $\beta_j$ of $\alpha$ connecting $D_{n}$ to $v_j$, and with a preimage $\tilde{w}'_{j}$ of $w$ on its boundary; see the left-hand side of Figure~\ref{fig:3}.
			Note, in particular, that $\beta_m$ is an arc in $D_{n'} \setminus D_n$ that connects $w$ to all the critical values $v_1, \ldots, v_m.$
      
    Let $\Gamma_0$ be a Jordan curve in $D_{n'}\setminus (\overline{U}\cup \overline{V}\cup (\beta_m\setminus\{w\}))$ 
    obtained as follows. We begin at $w$, proceed along an arc outside of $D_{n}$ that connects
      $w$ to $z$, running around $\beta_m$ in negative orientation, and returning 
      to $w$ from $z$ via the curve $\gamma$ from Proposition~\ref{prop:picture}. So $\Gamma_0$ 
      surrounds $V$ and $\beta_m\setminus\{w\}$, but not $U$. 

    Then the preimage of $\Gamma_0$ consists of $m_{U}$ simple preimages of $\Gamma_0$, and one loop that is mapped by $f$ as a degree
      $2m + 1$ covering of circles. Each of the simple preimages contains the component of  $f^{-1}(\gamma)$ connecting
     $\tilde{z}_j$ and  $\tilde{w}_j$ for some
      $j\in \{m+2, \dots , n+1\}$, and is a loop surrounding the corresponding simple R-face. The 
      remaining loop passes (in positive orientation) through
        $\tilde{w}_{m+1}$, $\tilde{z}_1$, $\tilde{w}_{1}$, $\tilde{z}_2$, $\tilde{w}_2$,  \dots , $\tilde{w}_m$,
        $\tilde{z}_{m+1}$,  
        and then through the components $\tilde{D}_{n}^1$, \dots,  $\tilde{D}_{n}^m$. 
    
    Now consider an arc $\Gamma$, not intersecting $\overline{V} \cup \CV(f)$ and not intersecting  $\overline{U}$ except in
      one endpoint,  defined as follows. The arc starts at some point $\mu\in\partial U$, 
      and runs around $\beta_m$ and $\overline{V}_k$ $m$ times in negative orientation; in other words, 
      in the same manner as $\Gamma_0$. On its last loop it enters
       $D_{n}$ at
      the point $z$, traversing along the arc $\gamma$ and ending at $w$. Hence
      $\Gamma$ is homotopic (in $D_{n'}\setminus \CV(f)$) to a curve 
      that connects $\mu$ to $w$ within the left half of $D_{n}\setminus\gamma$, and then
      traverses $\Gamma_0$ $m$ times. 

   Since $\Gamma$ does not
      contain any critical  values, every component of $f^{-1}(\Gamma)$ is an  arc beginning on the boundary
      of some preimage component of $U$, and ending in a preimage of $w$. 
      Let $j\in \{1,\dots,m\}$ and consider the component $\tilde{\Gamma}_j$ of 
      $\Gamma$ that starts on the boundary of the simple preimage of $U$ contained in the 
      L-face bounded by an arc connecting $\tilde{w}_j$ and  $\tilde{z}_{j+1}$. Our 
      discussion of the structure of $f^{-1}(\Gamma_0)$ 
      shows that, for $j<m$,  $\tilde{\Gamma}_j$ ends at the preimage $\tilde{w}'_{j+1}$ of $w$ on the boundary  of the 
       disc $\tilde{D}_{n}^{j+1}$, while $\tilde{\Gamma}_m$ ends at $\tilde{w}_{m+1}$. 
      Furthermore, if $\tilde{\Gamma}$ is the component of $f^{-1}(\Gamma)$ 
       ending at $\tilde{w}'_1$, then $\tilde{\Gamma}$ begins on $\tilde{U}$. 

       \begin{figure}
      	\includegraphics[width=\textwidth]{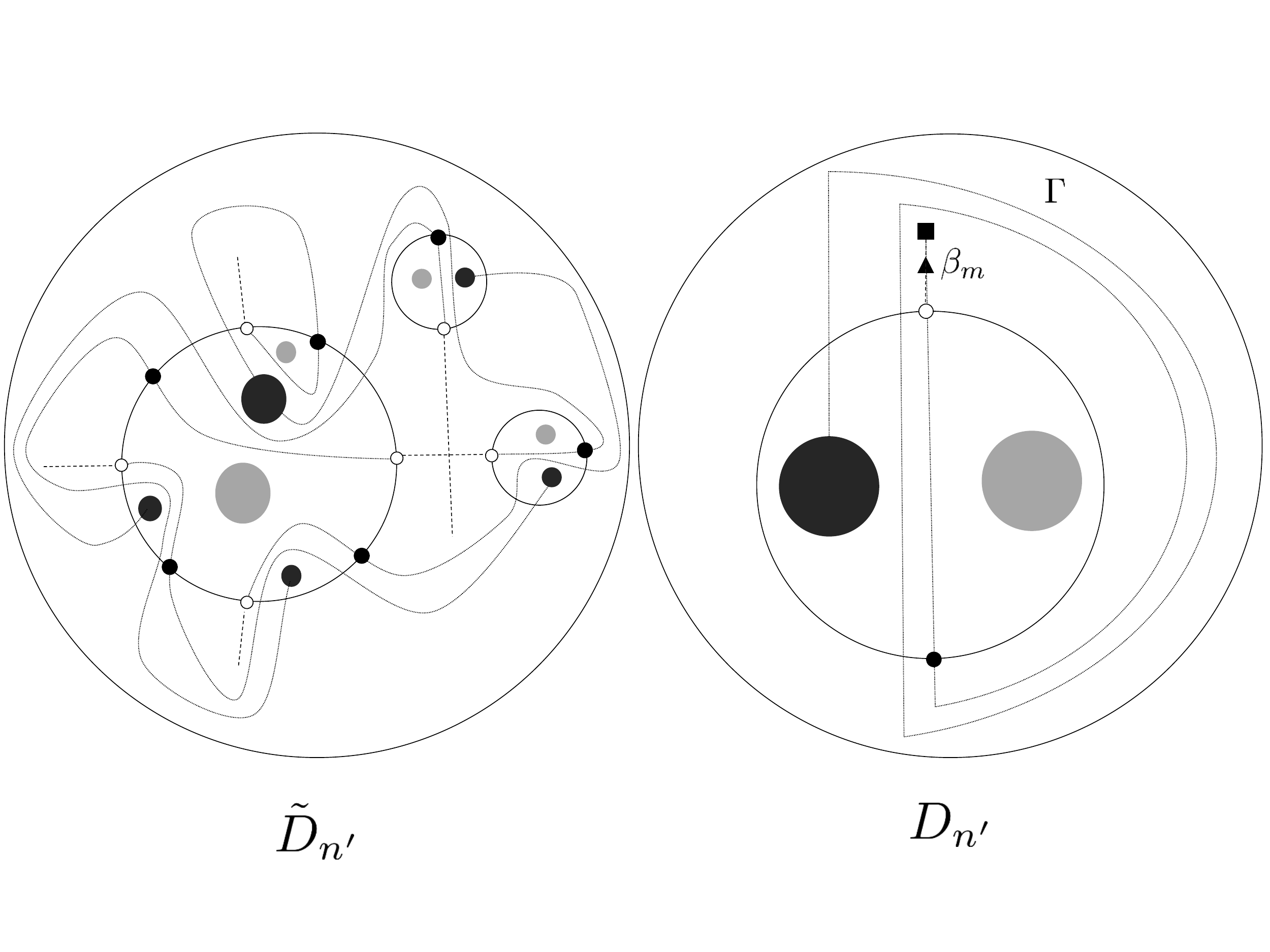}
        \caption{The thin channel $W$ has been added to the image (right) from Figure~\ref{fig:3}; the part containing $\Gamma$ is solid, and the part containing $\beta_m$ is dashed. 
      The preimage of $W$ has been added to the preimage on the left, shown solid or dashed appropriately}\label{fig:5}
      \end{figure}

     Recall from the construction  of  $\alpha$ that
         $\tilde{w}'_1$, \dots,  $\tilde{w}'_m$ and $\tilde{w}_{m+1}$, as well as the critical points
         $c_1,\dots,c_m$ all belong to the same component $\tilde{\beta}$ of
         $f^{-1}(\beta_m)$. Take a thin channel $W$ containing the Jordan arc $\Gamma \cup \beta_m$, in such
          a way that $U' \defeq U \cup W$ is simply-connected; see the right-hand side of Figure~\ref{fig:5}. 
          
          It then follows that the component of $f^{-1}(U')$ containing 
           $\tilde{w}_{m+1}$ contains the connected set
               \[ \tilde{\beta} \cup \tilde{\Gamma} \cup \bigcup_{j=1}^m \tilde{\Gamma}_j \cup  (f^{-1}(U)\cap \tilde{D}_{n}); \]
          see the left-hand side of Figure~\ref{fig:5}. This completes the proof.
\end{proof}

\begin{cor}[Sequence of partial configurations]
\label{cor:mainconstruction}
  Let $f\colon X\to Y$ be vanilla, and let $(D_n)_{n=1}^{\infty}$ be the sequence from Proposition~\ref{prop:Dn}. Then there is a sequence 
     $(D_{n_k}, U_k , V_k)_{k=1}^{\infty}$ of partial configurations such that 
    $(U_k)$ and $(V_k)$ are increasing sequences. Furthermore,
    the sequences can be chosen such that every component of $f^{-1}(U_1)$ is contained in
     $\tilde{U_k}$ for sufficiently large $k$, and similarly for the components of $f^{-1}(V_1)$.
\end{cor}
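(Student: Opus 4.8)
The plan is to build the sequence recursively: first exhibit an explicit base partial configuration, then apply Proposition~\ref{prop:inductivestep} repeatedly, alternating at consecutive steps which of the two domains is enlarged.

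\emph{Base case.} By Proposition~\ref{prop:Dn}, $D_2$ contains exactly two critical values $v_1,v_2$, and $\tilde D_2\defeq\comp_c(f^{-1}(D_2))$ contains exactly two critical points $c_1,c_2$ with $f(c_i)=v_i$ (here we use that $f$ is vanilla, so each $v_i$ has precisely one critical preimage, and Proposition~\ref{prop:Dn}(d) together with $\#(D_2\cap\CV(f))=2$). As $\CV(f)$ is discrete, choose disjoint Jordan domains $U_1\ni v_1$ and $V_1\ni v_2$ with $\overline{U_1}\cup\overline{V_1}\subset D_2$, with $(\partial U_1\cup\partial V_1)\cap\CV(f)=\emptyset$, and small enough that $U_1\cap\CV(f)=\{v_1\}$ and $V_1\cap\CV(f)=\{v_2\}$. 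Set $n_1\defeq 2$, $\tilde U_1\defeq\comp_{c_1}(f^{-1}(U_1))$ and $\tilde V_1\defeq\comp_{c_2}(f^{-1}(V_1))$; being connected subsets of $f^{-1}(D_2)$ that meet $\tilde D_2$, these are contained in $\tilde D_2$. One checks directly that $(D_2,U_1,V_1)$ is a partial configuration: the first three conditions of Definition~\ref{def:partialconf} are immediate, and since $\tilde D_2\cap\CP(f)=\{c_1,c_2\}\subseteq\tilde U_1\cup\tilde V_1$, the last two hold with $\tilde D=\tilde D_2$.

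\emph{Inductive step.} Suppose $(D_{n_k},U_k,V_k)$ is a partial configuration, with distinguished components $\tilde U_k,\tilde V_k\subseteq\tilde D_{n_k}$, such that $U_1\subseteq U_k$ and $V_1\subseteq V_k$. If $k$ is odd, apply Proposition~\ref{prop:inductivestep} to $(D_{n_k},U_k,V_k)$ to obtain $U_{k+1}\supseteq U_k$ and $n_{k+1}=n_k+m_{V_k}$ such that $(D_{n_{k+1}},U_{k+1},V_k)$ is a partial configuration whose distinguished preimage component $\tilde U_{k+1}$ contains every connected component of $f^{-1}(U_k)\cap\tilde D_{n_k}$; put $V_{k+1}\defeq V_k$ and $\tilde V_{k+1}\defeq\tilde V_k$. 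If $k$ is even, do the same with the roles of $U_k$ and $V_k$ exchanged (this is allowed, Definition~\ref{def:partialconf} being symmetric in its last two entries), putting $U_{k+1}\defeq U_k$ and $\tilde U_{k+1}\defeq\tilde U_k$. Since $U_1,V_1$ each contain a critical value and $(U_k),(V_k)$ are increasing, $m_{U_k}\geq 1$ and $m_{V_k}\geq 1$ for every $k$, so $n_{k+1}>n_k$ and $n_k\to\infty$. Moreover $\tilde U_k\subseteq\tilde U_{k+1}$ and $\tilde V_k\subseteq\tilde V_{k+1}$: at an odd step $\tilde U_k$ lies in $\tilde D_{n_k}$, hence is one of the components of $f^{-1}(U_k)\cap\tilde D_{n_k}$ absorbed into $\tilde U_{k+1}$, while at an even step $\tilde U_{k+1}=\tilde U_k$; symmetrically for $V$. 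That the component furnished by Proposition~\ref{prop:inductivestep} is admissible as a distinguished component for the new triple follows by tracking critical points: the newly created ones lie in the enlarged component, the old ones stay in $\tilde U_k\cup\tilde V_k$.

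\emph{Conclusion.} First, $\bigcup_k\tilde D_{n_k}=\bigcup_n\tilde D_n=X$: any path in $X$ starting at $c$ has compact $f$-image, which therefore lies in some $D_n$, so the path lies in $\tilde D_n$; as $X$ is path-connected, this gives the claim. Now let $C$ be a connected component of $f^{-1}(U_1)$. Since $U_1\cap\CV(f)=\{v_1\}$ and $v_1$ has a unique critical preimage, $C$ contains at most one critical point; as $\partial U_1$ is a Jordan curve avoiding $\CV(f)$, the first proposition of Section~\ref{sect:preliminary} shows $C$ is bounded by a Jordan curve in $X$, so $\overline C$ is compact and $C\subseteq\tilde D_{n_k}$ for all large $k$. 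Choosing such a $k$ that is also odd, $C$ is a connected subset of $f^{-1}(U_1)\cap\tilde D_{n_k}\subseteq f^{-1}(U_k)\cap\tilde D_{n_k}$, hence lies in a component of the latter, which is contained in $\tilde U_{k+1}$; by the monotonicity established above, $C\subseteq\tilde U_j$ for all $j\geq k+1$. The argument for components of $f^{-1}(V_1)$ is identical. I expect the main obstacle to be precisely the bookkeeping in the inductive step: keeping exact track of the distinguished preimage components $\tilde U_k,\tilde V_k$ through the alternating construction, and confirming that the component produced by Proposition~\ref{prop:inductivestep} really is the correct $\tilde U_{k+1}$ and does contain $\tilde U_k$. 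Everything else is a routine unwinding of the definitions and of the structure described in Proposition~\ref{prop:picture}.
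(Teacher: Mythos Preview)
Your proposal is correct and follows essentially the same approach as the paper: anchor with an explicit base configuration, then apply Proposition~\ref{prop:inductivestep} at each step, alternating which domain is enlarged, and finally use that $n_k\to\infty$ to absorb any given preimage component of $U_1$ (resp.\ $V_1$) into $\tilde U_k$ (resp.\ $\tilde V_k$) for large $k$. The only cosmetic differences are that the paper anchors at $n_1=1$ with $U_1$ containing \emph{no} critical value (so $m_{U_1}=0$), whereas you anchor at $n_1=2$ with a critical value in each of $U_1,V_1$; and that you spell out why $\bigcup_k\tilde D_{n_k}=X$ and why each preimage component $C$ of $U_1$ has compact closure, while the paper simply asserts $C\subset\tilde D_n$ for large $n$.
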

\begin{proof}
   We construct a sequence $(D_{n_k},U_k,V_k)_{k=1}^{\infty}$ of partial configurations, with $n_k\to\infty$,  such that when $k$ is odd we have
\begin{equation}\label{eqn:inductivepropertyodd}
U_k\subsetneq U_{k+1}, V_k = V_{k+1}, \text{ and } f^{-1}(U_k)\cap \tilde{D}_{n_k} \subset \tilde{U}_{k+1},
\end{equation}
  and when $k$ is even we have
\[
V_k\subsetneq V_{k+1}, U_k =  U_{k+1}, \text{ and } f^{-1}(V_k)\cap \tilde{D}_{n_k} \subset \tilde{V}_{k+1}.
\]

    This sequence is defined recursively using Proposition~\ref{prop:inductivestep}.
 To anchor the recursion, we simply set 
    $n_1\defeq 1$, 
    let $V_1$ be a sufficiently small disc around the critical value of $f$ in $D_1$, and let $U_1$ be 
     any Jordan domain whose closure is contained in $D_1$ and disjoint from $\overline{V_1}$. 
   It is easy to check that this is indeed a partial configuration.

 Now suppose that $(D_{n_k}, U_k,V_k)$ has been defined for $k\geq 1$. If $k$ is odd, we apply
    Proposition~\ref{prop:inductivestep} with $n=n_k$, $U=U_k$ and $V=V_k$. 
   Setting $n_{k+1}\defeq n'$, $U_{k+1}\defeq U'$ and $V_{k+1}\defeq V_k$, we obtain another partial configuration
   satisfying~\ref{eqn:inductivepropertyodd}. 
   If $k$ is even, then we similarly apply Proposition~\ref{prop:inductivestep}, but interchanging the roles
    of $U_k$ and $V_k$. This completes the construction. 

   Let $W$ be a connected component of $f^{-1}(U_1)$. Then there is some $N$ such that 
     $W\subset \tilde{D}_n$ for $n\geq N$. Let $K$ be odd such that $n_K\geq N$. Then 
     \[ W\subset f^{-1}(U_1) \cap \tilde{D}_{n_K} \subset f^{-1}(U_{{k}}) \cap \tilde{D}_{n_k} \subset
               \tilde{U}_k \]
     for $k>K$, as desired (and similarly for $f^{-1}(V_1)$). This completes the proof.
       \end{proof} 
       
The proof of Theorem~\ref{thm:maintopological} is now quite straightforward.
\begin{proof}[Proof of Theorem~\ref{thm:maintopological}]
   Set \[U\defeq\bigcup_{k\in\N} U_k\quad\text{and}\quad V\defeq \bigcup_{k\in\N} V_k,\]
    where $U_k$ and $V_k$ are as in Corollary~\ref{cor:mainconstruction}. Then $U$ and $V$ are  
    increasing unions of simply-connected domains and so are simply-connected.
    Moreover, $U\cap V =  \emptyset$. 

    Let $\tilde{U}$ be the component of $f^{-1}(U)$ containing $\tilde{U}_k$ for all $k$. Then 
     $f^{-1}(U_1) \subset \tilde{U}$ by assumption.  Choose $x\in f^{-1}(U)$, and let $\gamma$ be a curve in 
     $U$ connecting $f(x)$ and some point  $y$ of $U_1$ without passing  through  any critical values.  
     Then there is a curve  $\tilde{\gamma}\subset f^{-1}(U)$ connecting $x$ to a point 
     in $f^{-1}(y)\subset f^{-1}(U_1)\subset \tilde{U}$,  and  hence $x\in  \tilde{U}$.
     So  $f^{-1}(U)=\tilde{U}$ is connected. Likewise $f^{-1}(V)$ is connected. 
\end{proof}

 We also observe that we can 
    ensure that the domains $U$ and $V$ of Theorem~\ref{thm:maintopological} have a common boundary,
    as promised in Remark~\ref{rmk:commonboundary}.

\begin{obs}[Common boundary]\label{obs:commonboundary}
  The domains $U$ and $V$ in Theorem~\ref{thm:maintopological} can be chosen such that $\partial U = \partial V$.
\end{obs}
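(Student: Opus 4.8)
The plan is to run the construction of Theorem~\ref{thm:maintopological} (that is, Corollary~\ref{cor:mainconstruction} together with Proposition~\ref{prop:inductivestep}) with one extra requirement at each step, in the spirit of the Lakes of Wada. Working, as in the proof of Proposition~\ref{prop:Dn}, in $Y=\C$ with the Euclidean metric, fix a sequence $\eps_k\to 0$. At an odd stage $k$, where we extend $U_k$ to $U_{k+1}=U_k\cup W_k$ by the thin channel $W_k$ of Proposition~\ref{prop:inductivestep}, we additionally require that $W_k$ pass within distance $\eps_k$ of \emph{every} point of the Jordan curve $\partial V_k$; symmetrically, at each even stage $k$, where $V_{k+1}=V_k\cup W_k$, we require that $W_k$ pass within $\eps_k$ of every point of $\partial U_k$. (We still keep $W_k\cap\overline{V_k}=\emptyset$, respectively $W_k\cap\overline{U_k}=\emptyset$, as is needed for a partial configuration.) It is essential that we only approach the \emph{boundary} of the other domain and never its interior: since $W_k$ must be disjoint from $\overline{V_k}$, it cannot come $\eps_k$-close to points of $V_k$ that lie more than $\eps_k$ from $\partial V_k$.

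Granting this, $\partial U=\partial V$ follows from a short point-set argument. Let $P\in\partial U$. Since $V$ is open and $U\cap V=\emptyset$ we have $P\notin V$. As the $U_k$ increase to $U$, the distances $\dist(P,U_k)=\dist(P,\partial U_k)$ (using $P\notin U_k$) are non-increasing in $k$ and tend to $\dist(P,U)=0$; in particular $d_k\defeq\dist(P,\partial U_k)\to 0$ along even $k$. For even $k$, choose $q_k\in\partial U_k$ with $|q_k-P|=d_k$; by our requirement there is $y_k\in W_k\subset V$ with $|y_k-q_k|<\eps_k$, so $|y_k-P|<\eps_k+d_k\to 0$ and hence $P\in\overline V$. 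Thus $P\in\overline V\setminus V=\partial V$, which gives $\partial U\subseteq\partial V$; the reverse inclusion is proved identically, interchanging the roles of $U$ and $V$ (and of odd and even stages). Hence $\partial U=\partial V$, and with $U=\bigcup_k U_k$, $V=\bigcup_k V_k$ this also yields the domains of Theorem~\ref{thm:maintopological}.

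It remains to see that the extra requirement can be met without disturbing anything else; this is the only place where one must be slightly careful, though no new idea is needed. At an odd stage $k$, the curve $\alpha$, the arc $\beta_m$ and the critical values $v_1,\dots,v_m$ of Proposition~\ref{prop:inductivestep} all lie away from $\overline{V_k}\subset D_{n_k}$, and $\overline{U_k}$ together with the finitely many points of $\CV(f)\cap V_k$ are at positive distance from the Jordan curve $\partial V_k$. Hence we may fix a thin collar $\mathcal C$ of $\partial V_k$, contained in $D_{n_k}$, with $\mathcal C\cap\overline{V_k}=\emptyset$, meeting neither $\overline{U_k}$, nor $\CV(f)$, nor $\beta_m$. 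Choose a finite $\eps_k/2$-net of $\partial V_k$ and perturb its points slightly outward into $\mathcal C$. In the proof of Proposition~\ref{prop:inductivestep} the arc $\Gamma$ already winds around $\overline{V_k}\cup\beta_m$ exactly $m$ times; we simply reroute one of these loops so that it also passes through each net point, while staying inside $\mathcal C$. Since $\mathcal C$ contains no critical value and the rerouted loop winds around $\overline{V_k}$ exactly as before, $\Gamma$ is unchanged up to homotopy in $D_{n'}\setminus\CV(f)$, so the entire proof of Proposition~\ref{prop:inductivestep} — and with it every partial-configuration property — applies verbatim to the new $\Gamma$. Taking $W_k$ to be a sufficiently thin neighbourhood of $\Gamma\cup\beta_m$ preserves $W_k\cap\overline{V_k}=\emptyset$, keeps $U'=U_k\cup W_k$ simply-connected and avoids all critical values except $v_1,\dots,v_m$, while $W_k\supset\Gamma$ still passes within $\eps_k$ of every point of $\partial V_k$; the even stages are symmetric. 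The main obstacle is precisely this homotopy-invariance check; everything else is routine.
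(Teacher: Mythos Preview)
Your proof is correct and follows the same Lakes-of-Wada strategy as the paper: fix $\eps_k\to 0$, force the new channel at each step to come $\eps_k$-close to the boundary of the other domain, and finish with a short point-set argument showing $\partial U\subseteq\partial V$ and conversely. Your point-set argument is essentially identical to the paper's.

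The one noteworthy difference is in how the closeness requirement is achieved. You reopen the proof of Proposition~\ref{prop:inductivestep}, reroute one loop of $\Gamma$ through an outside collar of $\partial V_k$, and check that the homotopy class in $D_{n'}\setminus\CV(f)$ is preserved. The paper instead avoids touching the proof of Proposition~\ref{prop:inductivestep} altogether: before invoking it, one simply enlarges $U_k$ to a Jordan domain $U^1$ with $U_k\subset U^1\subset\overline{U^1}\subset D_{n_k}\setminus\overline{V_k}$ that already comes within $\eps_k$ of every point of $\partial V_k$. Since $U^1$ contains no new critical values, $(D_{n_k},U^1,V_k)$ is again a partial configuration, and the proposition can be applied as a black box; the resulting $U'\supset U^1$ inherits the closeness. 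This is a little cleaner because it sidesteps the homotopy verification entirely, but your check is easy (the collar contains no critical values and the rerouted loop can be taken to wind once, as before), so both routes work equally well.
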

\begin{proof}
 Let $d$ be a metric on $X$.  
   In the setting of Proposition~\ref{prop:inductivestep}, we claim that the distance
    $d(U', v)$ can be chosen arbitrarily small for all $v\in\partial V$.

  To prove this, we let $\eps>0$, and choose a simply-connected domain 
   $U^1$
     such that
    $U\subset U^1\subset \overline{U^1}\subset D_n\setminus \overline{V}$, and such that every point
     of $\partial V$ has distance at most $\eps$ from  $U^1$. 
   
    Then $(D_n, U^1, V)$ is also a partial configuration, and we can apply
    Proposition~\ref{prop:inductivestep}. The resulting domain  $U'$ has the required properties.

   Now let $(\eps_k)$ be a sequence of positive real numbers tending to zero. In the $k$-th step of the inductive construction  in
     Corollary~\ref{cor:mainconstruction}, apply the above observation for $\eps=\eps_k$. 
   For the resulting domains $U$ and $V$, let $v\in \partial V$, and let $\delta>0$.
     Then, for all sufficiently large $k$, 
     $B(v,\delta)$ intersects $V_k$, and hence 
     $\partial V_k\cap B(v,\delta)\neq\emptyset$. If $k$ is chosen 
     odd and sufficiently large that $\eps_k<\delta$, then it follows that $U_k$ contains 
     a point of distance at most $2\delta$ from $v$. As $v$ and $\delta$ 
     were arbitrary, we have shown $\partial V\subset \partial U$.
     The converse inclusion follows analogously.
\end{proof}

\begin{proof}[Proof of Corollary~\ref{cor:maintopological}]
First we apply Theorem~\ref{thm:maintopological} 
   to obtain two disjoint simply-connected domains $U$ and $V$ each with connected preimage. Since $f$ has
    infinite degree on $f^{-1}(V)$, and $f$ is vanilla,   
    $V \cap \CV(f)$ is infinite. Set $U_1 \defeq U$ and then again apply Theorem~\ref{thm:maintopological} 
     to the restriction $f\colon f^{-1}(V) \to V$.
    We obtain two simply-connected subdomains of $V$, each with connected preimage. The result follows 
    by induction.
\end{proof}

%
%
%
%

\section{Examples}
\label{sect:examples}

In this section, we show how to deduce Theorem~\ref{thm:easyexample}, Theorem~\ref{thm:meroexample} and Theorem~\ref{thm:classbexample} from Theorem~\ref{thm:maintopological}.

\begin{proof}[Proof of Theorem~\ref{thm:easyexample}]
Let $f(z) = e^z + z$. The critical points of 
$f$ are $c_m = (2m+1)\pi i$,  
   with $m\in\Z$, and the corresponding critical values are
   $v_m = (2m+1)\pi i - 1$. 
Furthermore, it is easy  to  see that $f$ has no finite asymptotic values. 
By Proposition~\ref{prop:branchedcover}, $f$ is a branched covering map from $\C$ to $\C$, and by
   the above statements on critical values,  
    $f$ is vanilla.
The result then follows by Corollary~\ref{cor:maintopological}.
\end{proof}

\begin{proof}[Proof of Theorem~\ref{thm:meroexample}]
Let $f(z) = \tan z + z$. Let $U$ be the upper half-plane given by 
   $U \defeq \{ z \colon \im z > 0 \}$, and we note that $f^{-1}(U) = U$. Observe also that $f$ has no asymptotic values, and that $U$ contains no poles of $f$.

The critical points of $f$ in $U$, which are all simple, are 
\[
c_m = \left(m + \frac{1}{2}\right)\pi + i \operatorname{arsinh} 1, \quad\text{for } m \in \Z.
\]
The critical values of $f$ in $U$ are
\[
v_m = c_m + i \sqrt{2}, \quad\text{for } m \in \Z,
\]
and so each critical value has a unique critical preimage. 

By Proposition~\ref{prop:branchedcover}, $f \colon U \to U$ is vanilla, and the result follows 
    by Corollary~\ref{cor:maintopological}.
\end{proof}
\begin{remark}
The dynamics of the function $f$ from Theorem~\ref{thm:meroexample} were studied in \cite{MR3581221}.
\end{remark}

\begin{proof}[Proof of Theorem~\ref{thm:classbexample}]
Bergweiler \cite{walter} introduced the transcendental entire function
\[
f(z) = \frac{12\pi^2}{5\pi^2-48}\left(\frac{(\pi^2-8)z+2\pi^2}{z(4z-\pi^2)}\cos\sqrt{z} + \frac{2}{z}\right).
\]

It was shown in \cite{walter} that $f$ has a completely invariant Fatou component $U$, such that $0 \in \partial U$ and $(0,\infty) \subset U$. It was also shown that $0$ is the only finite asymptotic value of $f$, and that $f$ has infinitely many critical values, all of which lie in a real interval of the form $(0,c) \subset U$, and which accumulate only on the origin. All critical points of $f$ are simple, 
  real and positive, and clearly also lie in $U$; see \cite[Figure~2]{walter}.

By Proposition~\ref{prop:branchedcover}, to prove that $f \colon U \to U$ is vanilla we would need to show that each critical value of $f$ only has one critical preimage. Although this is likely to be the case, it seems quite complicated 
  to prove. 

Instead, we use quasiconformal maps to find a function ``close'' to $f$ with the properties we require. Let the critical points of $f$ be $(c_k)_{k\in\N}$, and choose a strictly decreasing sequence of positive real numbers $(r_k)_{k\in\N}$ tending to zero. For each $k$ consider the balls $B_k \defeq B(f(c_k), r_k)$ and the component 
  $\tilde{B}_k\defeq \comp_{c_k}(f^{-1}(B_k))$. 
   If $r_k$ is chosen sufficiently small, then
    \[ \diam(\tilde{B}_k) < \frac{\dist(c_k, \partial U \cup (\CP(f)\setminus \{c_k\}))}{2}. 
          \]  
 Hence $\tilde{B}_k$ and $B_k$ are in $U$, the $\tilde{B}_k$ are pairwise disjoint, and $f \colon \tilde{B}_k \to B_k$ is proper. 

Let $\phi \colon \C \to \C$ be a quasiconformal map such that $\phi(\D) = \D$, such that $\phi(z) = z$ for $z \in\partial \D$, and such that $\phi(0) = 1/2$. We then define a quasiregular map $G \colon \C \to \C$ by
\[
G(z) = 
\begin{cases}
r_k \phi\left(\frac{f(z)-f(c_k)}{r_k}\right) + f(c_k), \quad & \text{if } z \in \tilde{B}_k \text{ for some } k \in \N, \\
f(z), \quad & \text{otherwise.}
\end{cases}
\]

It is easy to see that $G$ is indeed quasiregular, with the same quasiconstant as $\phi$. Moreover, $G$ has the same critical points as $f$, all of which are simple, and the critical values of $G$ are $f(c_k) + r_k /2$, for $k \in \N$.

Since $G$ is a quasiregular mapping of the plane, it follows by Sto\"{\i}low factorisation that there is a transcendental entire function $g$ and a quasiconformal map $\psi \colon \C \to \C$ such that $g = G \circ \psi$.

It can be seen that $\AV(g) = \{0\}$. The critical points of $g$ are simple, and given by $\psi^{-1}(c_k)$, for $k \in \N$. The critical values of $g$ are equal to $f(c_k) + r_k /2$, for $k \in \N$, and all lie in $U$. Thus the (infinitely many) critical values of $g$ accumulate only at $0$, and $g$ is in the Eremenko-Lyubich class. Moreover, each critical point of $g$ has exactly one critical preimage.

We have shown that $g \colon \psi^{-1}(U) \to U$ is vanilla. By Corollary~\ref{cor:maintopological}, it follows that there are disjoint simply-connected domains $U_1, U_2, \ldots \subset U$ each with connected preimage in $\psi^{-1}(U)$. Since $g^{-1}(U) = \psi^{-1} \circ G^{-1}(U) = \psi^{-1}(U)$, this completes the proof.

It remains to show that each $U_j$ can be assumed to be bounded. That this is possible is easily seen as follows. Let $T>0$ be such that $\CV(g)\subset (0,T]$, and let
   $U^1\subset U$ be a bounded simply-connected domain containing $(0,T]$. It is easy to see that $g^{-1}(U^1)$ is also connected. Then the claim follows by
   applying Corollary~\ref{cor:maintopological} to the vanilla function $g\colon g^{-1}(U^1)\to U^1$.
\end{proof}

%
%
%
%

\section{Proof of Theorem~\ref{thm:AVexample}}
\label{sect:AVexample}

We now turn to our construction of an entire function $f$ having two simply-connected domains with connected  
  preimages, each containing a logarithmic asymptotic value of $f$. (An asymptotic value $a$ is \emph{logarithmic}
  if there is a neighbourhood $U$ of $a$ and a connected component $\tilde{U}$ of $f^{-1}(U)$ such that
   $f\colon \tilde{U}\to U\setminus \{a\}$ is a universal covering map.)
Similarly as in
  Theorem~\ref{thm:maintopological}, we could introduce a topological class of functions to which our methods apply. For 
  definiteness and simplicity, let us instead
   study the explicitly given entire function
\begin{equation}
\label{fdef}
f(z) \defeq \frac{2}{e^{1/4}\sqrt{\pi}} \int_0^z \cosh w \exp(-w^2)\ \deriv w = \frac{1}{2} \left(\erf\left(z+\frac{1}{2}\right)+\erf \left(z-\frac{1}{2}\right)\right).
\end{equation}
Here $\erf$ is the \emph{error function} $\erf(z) \defeq \frac{2}{\sqrt{\pi}}\int_0^z \exp(-w^2)\ \deriv w$; the equality in \eqref{fdef} is obtained by an explicit calculation. We shall use the following properties of $f$.

\begin{lem}[Properties of $f$]
\label{lem:f}
  The function $f$ is an odd entire function, real on the real axis with $f(\R)=(-1,1)$, and satisfying
    \[f(\{ iy\colon y > 0 \}) = f(\{ iy \colon y < 0 \}) = \{ iy \colon y \in \R \}. \]
  The critical points of $f$ are all simple and given by 
   $z_n = i\cdot \frac{2n+1}{2} \pi$, for $n\in\Z$. 
    The critical values $\zeta_n\defeq f(z_n)$ are all purely imaginary, and distinct critical points have distinct critical values. Moreover,
    $\im \zeta_n$ is positive for even $n$, and negative for odd $n$.  

  In addition, there are exactly two asymptotic values, namely $1$ and $-1$, and with $f(x)\to \pm 1$ as $x\to \pm \infty$.
    These are logarithmic asymptotic values and there is exactly one tract over each. That is, let 
     $U$ be a sufficiently small Jordan domain containing $a\in \{-1,1\}$, and let 
     $\tilde{U}$ be the connected component of $f^{-1}(U)$ containing  an infinite piece of the real axis.
     Then $f\colon\tilde{U}\to U\setminus\{a\}$ is a universal covering, while all other components of
      $f^{-1}(U)$ are mapped conformally to  $U$ by $f$. 
\end{lem}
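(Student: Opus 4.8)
The plan is to extract all asserted properties of $f$ from the integral representation $f(z) = \frac{2}{e^{1/4}\sqrt{\pi}}\int_0^z \cosh w\,\exp(-w^2)\,\deriv w$, which immediately gives $f'(z) = \frac{2}{e^{1/4}\sqrt\pi}\cosh z\,\exp(-z^2)$. First I would record the elementary structural facts. Since the integrand $\cosh w\,\exp(-w^2)$ is even, $f$ is odd. Since the integrand is real on $\R$ and $f'(x) = \frac{2}{e^{1/4}\sqrt\pi}\cosh x\,e^{-x^2} > 0$ there, $f$ is strictly increasing on $\R$; the value of the Gaussian-type integral $\int_0^\infty \cosh w\,e^{-w^2}\,\deriv w$ is computed by completing the square ($\cosh w\,e^{-w^2} = \tfrac12(e^{-(w-1/2)^2} + e^{-(w+1/2)^2})e^{1/4}$), giving $f(x)\to 1$ as $x\to+\infty$ and hence $f(\R)=(-1,1)$ by oddness; this also establishes the closed form in \eqref{fdef} via $\int_0^z e^{-(w\mp 1/2)^2}\deriv w = \frac{\sqrt\pi}{2}(\erf(z\mp\tfrac12)+\erf(\mp\tfrac12))$. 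For the imaginary axis, substituting $w = is$ shows $f(iy) = \frac{2i}{e^{1/4}\sqrt\pi}\int_0^y \cos s\, e^{s^2}\,\deriv s$, which is purely imaginary; since $e^{s^2}$ grows and $\cos s$ oscillates, the real integral $\int_0^y\cos s\,e^{s^2}\,\deriv s$ is unbounded above and below as $y\to\infty$ (the positive humps near $s=2k\pi$ dominate), so $f$ maps each of the positive and negative imaginary half-axes onto the whole imaginary axis. The critical points are the zeros of $f'$, i.e.\ the zeros of $\cosh z$, namely $z_n = i\frac{2n+1}{2}\pi$, and they are simple because $f''(z_n) = \frac{2}{e^{1/4}\sqrt\pi}\sinh z_n\,e^{-z_n^2}\neq 0$ (as $\sinh(i\frac{2n+1}{2}\pi) = i\sin\frac{2n+1}{2}\pi = \pm i \neq 0$).

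Next I would handle the critical values $\zeta_n = f(z_n)$. By the imaginary-axis computation above, $\zeta_n = \frac{2i}{e^{1/4}\sqrt\pi}\int_0^{(2n+1)\pi/2}\cos s\,e^{s^2}\,\deriv s$ is purely imaginary. For the sign of $\im\zeta_n$, note $\im\zeta_{n} - \im\zeta_{n-1}$ is (up to the positive constant) $\int_{(2n-1)\pi/2}^{(2n+1)\pi/2}\cos s\,e^{s^2}\,\deriv s$, and on this interval of length $\pi$ the sign of $\cos s$ is constant, equal to $(-1)^n$; so the sequence $\im\zeta_n$ is increasing from even to odd index? — here I would be careful with parity. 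A cleaner route: $\im\zeta_0 = \frac{2}{e^{1/4}\sqrt\pi}\int_0^{\pi/2}\cos s\,e^{s^2}\,\deriv s > 0$ since $\cos>0$ on $(0,\pi/2)$; and $\im\zeta_n - \im\zeta_{n-1}$ has sign $(-1)^n$, with $|\im\zeta_n - \im\zeta_{n-1}|$ strictly increasing in $|n|$ because of the $e^{s^2}$ weight, which forces $\im\zeta_n > 0$ for even $n$ and $\im\zeta_n < 0$ for odd $n$ (the alternating increments overwhelm the running sum). Distinctness of the critical values then follows: for two critical points with the same parity the strict monotonicity of the partial integrals between them gives distinct values, and critical points of opposite parity have critical values on opposite sides of $0$, hence also distinct. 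I would also use oddness to note $\zeta_{-n-1} = -\zeta_n$ is consistent with this sign pattern.

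Finally, for the asymptotic values I would argue that $f$ has finite order (indeed order $2$, since $f$ is built from $\erf$) and that $f' = \frac{2}{e^{1/4}\sqrt\pi}\cosh z\,e^{-z^2}$, so by the classical theory (Denjoy–Carleman–Ahlfors, or direct inspection of the tracts) the only directions along which $f$ can tend to a finite asymptotic value are $\pm$ the real direction, where the $e^{-z^2}$ factor decays; along $\R$ we have already shown $f(x)\to\pm1$, so the finite asymptotic values are exactly $1$ and $-1$. To see these are logarithmic with a single tract over each, fix a small Jordan disc $U$ around $a = 1$; the component $\tilde U$ of $f^{-1}(U)$ containing a half-line $\{x > R\}$ is, for $R$ large, a half-plane-like tract on which $f - 1$ behaves like a constant times $\int_z^\infty e^{-(w-1/2)^2}\deriv w \sim \frac{e^{-(z-1/2)^2}}{2(z-1/2)}$, an exponential-of-quadratic decay with no zeros, so $\log(f-1)$ maps $\tilde U$ biholomorphically onto a left half-plane, i.e.\ $f\colon\tilde U\to U\setminus\{1\}$ is a universal cover. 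Uniqueness of the tract over each of $\pm1$ follows because $f'$ has no zeros in $|\re z|$ large, so every other component of $f^{-1}(U)$ contains no critical point and, being bounded (it cannot contain a curve to $\infty$ along which $f\to 1$ other than in the one tract, by the asymptotics), is mapped conformally onto $U$; one organizes this via a standard logarithmic-change-of-variable / Eremenko--Lyubich argument, or more concretely via the explicit asymptotics of $\erf$. The main obstacle I expect is the asymptotic-value claim — specifically, rigorously ruling out additional (non-logarithmic, or extra logarithmic) tracts over $\pm 1$ and verifying the universal-covering statement — which requires invoking the order-$2$ structure together with the precise asymptotic expansion of $\erf$ in the relevant sectors, rather than any soft argument.
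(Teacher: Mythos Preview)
Your treatment of the elementary structural facts (oddness, real and imaginary axes, critical points) is essentially identical to the paper's. For the sign pattern and distinctness of the critical values you have the right idea but a small gap: writing $\alpha_n = \int_0^{(2n+1)\pi/2}\cos s\,e^{s^2}\,\deriv s$ and $\beta_n = \alpha_{n+1}-\alpha_n$, the statement ``$|\beta_n|$ is strictly increasing'' does \emph{not} by itself force the sign of $\alpha_n$ to alternate (take $\alpha_0=10$, $\beta_0=-1$, $\beta_1=2$, $\beta_2=-3$). What you need, and what you parenthetically assert without proof, is that each increment dominates the running sum, i.e.\ $|\beta_n|>|\alpha_n|$. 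The paper establishes the quantitative bound $|\beta_n|>2|\alpha_n|$ via the crude estimate $|\alpha_n|\le e^{t_n^2}$ together with a lower bound for $|\beta_n|$ coming from the $e^{s^2}$ growth on the next interval; this simultaneously gives the alternating sign and the strict monotonicity of $|\alpha_n|$, from which distinctness is immediate (also using $\zeta_{-n}=-\zeta_{n-1}$).

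For the asymptotic values and tract structure your route genuinely diverges from the paper's. You propose to read off the logarithmic singularity and the uniqueness of the tract over $\pm 1$ from the explicit asymptotic expansion of $\erf$, which is workable but, as you note, the most delicate part. The paper sidesteps all of this: since $f$ has order $2$, Denjoy--Carleman--Ahlfors gives $2m_{\mathrm{direct}}+m_{\mathrm{indirect}}\le 4$; because $\CV(f)$ is discrete, every asymptotic value is isolated in $S(f)$, so every transcendental singularity over it is logarithmic and in particular direct. Hence $m=m_{\mathrm{direct}}\le 2$, and as $\pm 1$ are already asymptotic values there are no others and exactly one logarithmic tract over each. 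This counting argument yields the universal-covering statement and the conformality of the remaining components with no asymptotic analysis at all, which is considerably cleaner than the explicit approach you outline.
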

\begin{proof}
  The function $f$ is odd as it is the integral of an even function with $f(0)=0$. Since the integrand is real and positive
    on the real axis, $f$ is real and strictly increasing on  $\R$. Since $\erf(x)\to \pm 1$ as $x\to \pm \infty$, 
   the same is true for $f$. In particular, $f(\R)=(-1,1)$, and $-1$ and $1$ are asymptotic values of $f$. 

   The critical points of $f$ are the zeros of $\cosh$, which are as stated in the lemma; they are simple since
    $\cosh$ has only simple zeros. Furthermore, $f$ is imaginary on the imaginary axis. Indeed, if $t\in\R$, then 
\begin{align}\label{eqn:fimaginary}
  f(it) &= \frac{2}{e^{1/4}\sqrt{\pi}} \int_0^{it} \cosh w \exp(-w^2)\ \deriv w \\
      &= \frac{2i}{e^{1/4}\sqrt{\pi}} \int_0^{t} \cos y \exp (y^2)\ \deriv y 
     \eqdef \frac{2i}{e^{1/4}\sqrt{\pi}} \alpha(t).\notag
\end{align}

 For $n\geq 0$, set  $t_n \defeq \frac{2n+1}{2}\pi$ and
   $\alpha_n \defeq \alpha(t_n)$. Observe that $\alpha_0>0$. 
    Since $\zeta_{-n} = -\zeta_{n-1}$ for all $n$, the following claim implies that 
    all $\zeta_n$ are indeed pairwise distinct, and that 
    $\im \zeta_n$ is positive if and only if $n$ is even. 

\begin{claim}
   The sequence $\lvert \alpha_n\rvert$ is strictly increasing
    and $\alpha_n$ is positive exactly for even $n$. 
\end{claim}
\begin{subproof}
Note that 
$\alpha_{n+1} = \alpha_n + \beta_n$, where 
\[
\beta_n \defeq \int_{\frac{2n+1}{2}\pi}^{\frac{2n+3}{2}\pi} \cos y \exp (y^2) \ \deriv y.
\]
 Note that $\beta_n$ is positive exactly when $n$ is odd. We now estimate $\beta_n$ from below in terms of
    $\alpha_n$. Note that 
    $\lvert \alpha(t)\rvert \leq \exp(t^2)$ for all $t$.  A simple estimate shows that 
\[
\lvert \beta_n \rvert > 
\exp\left(\left(\frac{3n+2}{3}\right)^2\pi^2\right) > 
2 \exp\left(\left(\frac{2n+1}{2}\right)^2\pi^2\right) \geq 2 \lvert \alpha(t_n)\rvert = 2 \lvert \alpha_n\rvert . 
\]
Hence $\lvert \alpha_{n+1}\rvert > \lvert \alpha_n\rvert$ for all $n$, and 
   $\alpha_{n+1}$ has the same sign as $\beta_n$. This proves the claim.
\end{subproof}

    Note that the order of $f$ (see \cite[p.~219, \P~181]{nevanlinnagerman}) is $\rho(f)=2$. By the Denjoy-Carleman-Ahlfors 
        theorem~\cite[p.~313, \P~269]{nevanlinnagerman}, the number $m$ of asymptotic
        values of $f$ is finite. More precisely, 
        $2m_{\direct}+m_{\indirect}\leq 4$, where 
        $m_{\direct}$ and $m_{\indirect}$ are the numbers
        of \emph{direct} and \emph{indirect} singularities of $f^{-1}$, respectively,
        over finite asymptotic values.
        (Compare~\cite[p.~289, \P~245]{nevanlinnagerman},  \cite{bergweilereremenkosingularities} or Section~\ref{sect:itdoeshold} for definitions.)
        As the 
        set of critical values of $f$ is discrete, every asymptotic value $a$ of $f$ 
        is an isolated point of $S(f)$. Hence any singularity over $a$ is logarithmic, and 
        therefore direct. So $m=m_{\direct}\leq 2$, and there are no asymptotic
        values except $-1$ and $1$, as claimed. 
\end{proof}


In summary, $f$ has a structure similar to a vanilla function, apart from the two asymptotic values. 
Let $\CV^+(f)=\{\zeta_{n}\colon n\geq 0\}$ denote the set of critical values corresponding to critical points with positive imaginary part, and let $\CV^-(f)=\CV(f)\setminus \CV^+(f)$ denote the set of critical values corresponding to critical points with negative imaginary part.

 Recall that we defined the notion of partial configurations only  for vanilla functions; we shall next
   introduce a version of this notion specific to our function $f$.
\begin{defn}[Partial configuration]
\label{def:partialconfasymptotic}
  A triple $(D,U,V)$ of Jordan domains
  is called a \emph{partial configuration} (for the function $f$ from~\eqref{fdef}) if all the following hold.
  \begin{enumerate}[(a)]
      \item $[-1,1]\subset D$.
      \item $-1 \in U$ and $1 \in V$.
      \item $\overline{U}\cap \overline{V}=\emptyset$ and $\overline{U} \cup \overline{V} \subset D$.
      \item $D\cap \CV(f)\subset U\cup V$ and $\partial D\cap S(f)=\emptyset$. 
      \item The component $\tilde{D}\defeq \comp_{\R}(f^{-1}(D))$ satisfies
          $f(\tilde{D}\cap \CP(f))=D\cap \CV(f)$.
      \item Let $\tilde{U}$ and $\tilde{V}$ be the components of $f^{-1}(U)$ and $f^{-1}(V)$, respectively, that
         contain an infinite piece of the real axis. Then 
            $\tilde{D}\cap \CP(f) \subset \tilde{U}\cup\tilde{V}$. 
   \end{enumerate}
\end{defn}

  The mapping properties of $f$ on a partial configuration are 
     illustrated in Figure~\ref{fig:1asymptotic} and in the following analogue of Proposition~\ref{prop:picture}.
\begin{figure}
	\includegraphics[width=\textwidth]{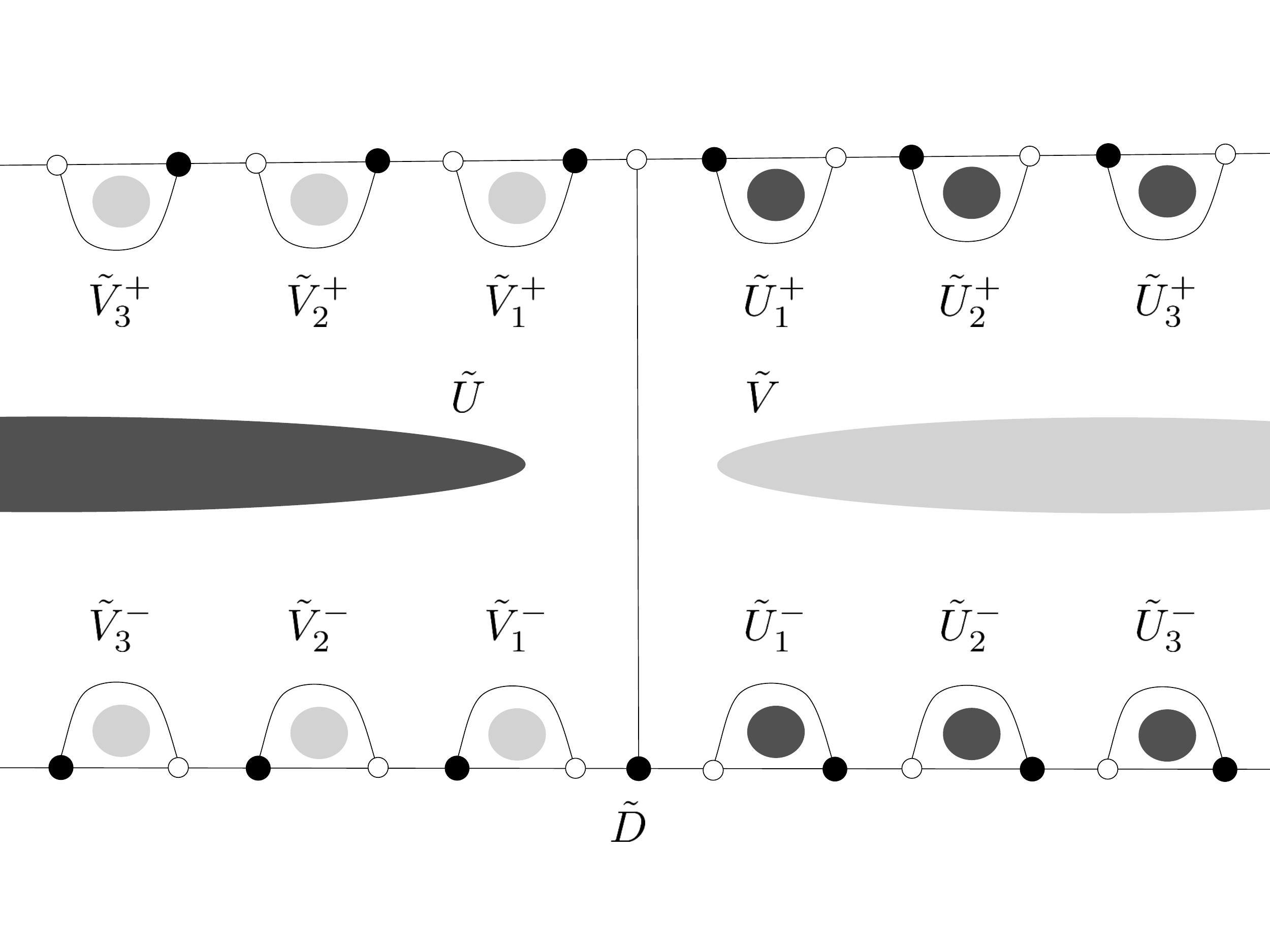}
  \caption{An illustration of Proposition~\ref{prop:pictureasymptotic}. 
    The structure of
    the partial configuration $(D,U,V)$ is the same as shown in the right-hand side of Figure~\ref{fig:1}, so we do 
    not repeat it here. Shown is the preimage component $\tilde{D}$ of $D$, where 
   shades of grey indicate preimages of $U$ and $V$, and white and black circles indicate preimages of $w$ and $z$ respectively. We stress again that this picture is topological rather than geometrically accurate. We also remind the reader that $D$ has other preimage components under $f$; these are not illustrated.}\label{fig:1asymptotic}
\end{figure}

\begin{prop}[Structure of partial configurations]
\label{prop:pictureasymptotic}
Suppose that $(D,U,V)$ is a partial configuration, and let $\gamma$ be a crosscut of $D$ that
   separates $\overline{U}$ from $\overline{V}$. 
Then:
  \begin{enumerate}[(a)]
    \item \label{item:boundary}%
       The boundary  $\partial \tilde{D}$ consists of two injective curve tending to infinity in both directions: one in the 
       upper half-plane (the \emph{upper boundary} $\partial^+\tilde{D}$) 
       and one in the lower half-plane (the \emph{lower boundary} $\partial^-\tilde{D}$). 
    \item Each component of $\tilde{D}\setminus f^{-1}(\gamma)$ (which we shall call a \emph{face}) 
          contains either exactly one component of $f^{-1}(U)$ or exactly one component of $f^{-1}(V)$. 
           A \emph{simple} face is one that is mapped conformally by $f$. 
    \item The faces containing $\tilde{U}$ and $\tilde{V}$ are adjacent to each other, separated by 
      a preimage $\tilde{\gamma}$ of $f^{-1}(\gamma)$ that connects the upper and lower boundaries of $\tilde{D}$. 
    \item Every component of $f^{-1}(U)\cap\tilde{D}$ apart from $\tilde{U}$ is contained in a simple face 
          adjacent to the one containing $\tilde{V}$. Similarly, each component of $f^{-1}(V)\cap\tilde{D}$ 
          apart from $\tilde{V}$ is 
          contained in a simple face 
          adjacent to the one containing $\tilde{U}$. 

    The components of $(f^{-1}(U)\setminus \tilde{U})\cap \tilde{D}$ contained in a face whose boundary intersects the lower boundary of 
     $\tilde{D}$ will be labelled $(\tilde{U}^-_j)_{j=1}^{\infty}$, with $\tilde{U}_1^-$ closest to $\tilde{\gamma}$ and 
     proceeding in positive orientation. We similarly define $(\tilde{U}_j^+)$, $(\tilde{V}_j^-)$, and $(\tilde{V}_j^+)$ (see Figure~\ref{fig:1asymptotic}). 
    \end{enumerate}
\end{prop}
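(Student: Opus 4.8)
The plan is to adapt the proof of Proposition~\ref{prop:picture}, carrying along the two logarithmic tracts of $f$: these replace the Jordan domain $\tilde D$ and finite‑degree proper map of the vanilla case by a strip‑like simply‑connected $\tilde D$ on which $f$ has infinite degree with exactly two logarithmic singularities, over $-1$ and over $1$. First I would record basic facts. Since $f(\R)=(-1,1)\subset D$ and $\R$ is connected, $\R\subset\tilde D$; since $f$ is open, $\tilde D$ is simply connected (a preimage component of a simply‑connected domain under an open map is simply connected). By Lemma~\ref{lem:f} and Definition~\ref{def:partialconfasymptotic}, $\tilde D$ contains the unique tract over each of $\pm 1$ and exactly the $m_U+m_V$ critical points lying over $\CV(f)\cap D$, all of which belong to $\tilde U\cup\tilde V$; also $\gamma$, being disjoint from $\overline U\cup\overline V$, avoids $\CV(f)$ and $\pm 1$, hence $S(f)$.

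For part~\ref{item:boundary}: because $\partial D\cap S(f)=\emptyset$, the map $f$ is a covering in a neighbourhood of $\partial D$, so $f^{-1}(\partial D)$ is a disjoint union of Jordan curves and lines, and $\partial\tilde D$ is a union of some of its components. A Jordan‑curve component of $\partial\tilde D$ is impossible: $\tilde D$ would then lie in one complementary component of that curve, and neither alternative is compatible with $\tilde D$ being simultaneously unbounded and simply connected. A line component of $f^{-1}(\partial D)$ maps onto $\partial D$ as a universal cover (a line over a circle), forcing a logarithmic end of $f^{-1}(D)$ over an asymptotic value inside $D$ (compare \cite{bergweilereremenkosingularities}); as the only asymptotic values are $\pm 1$, each with a single tract (Lemma~\ref{lem:f}), there are exactly two such lines and both lie in $\partial\tilde D$. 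Thus $\partial\tilde D$ consists of exactly two lines, each disjoint from $\R$ and so contained in one open half‑plane; they cannot lie in the same half‑plane, since then a straight segment from $\R$ into the opposite closed half‑plane would avoid $\partial\tilde D$, forcing that half‑plane into $\tilde D$ and hence $f$ of it into the bounded set $D$, contradicting $f(\{iy:y<0\})=i\R$. This gives the upper and lower boundaries $\partial^+\tilde D,\partial^-\tilde D$.

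For part~(b) and the first half of~(d) I would argue exactly as in Proposition~\ref{prop:picture}, treating the $U$‑side (the $V$‑side being symmetric). Let $D_L$ be the component of $D\setminus\gamma$ containing $\overline U$, hence also $-1$, and put $A_L\defeq D_L\setminus\overline U$: this is an annulus containing no singular values (the $m_U$ critical values in $D_L$ lie in $U$, and $1\in V\subset D_R$). Hence every component of $f^{-1}(A_L)$ is a covering of the annulus $A_L$, so either a finite‑degree annulus or an infinite‑degree simply‑connected strip; refilling the inner complementary component, each determines a unique ``L‑face'' (a component of $f^{-1}(D_L)$) containing exactly one component of $f^{-1}(U)$ --- the finite‑degree ones giving bounded simple L‑faces mapped conformally, and the unique infinite‑degree one (unique since there is one tract over $-1$) giving the L‑face that contains $\tilde U$. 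The same analysis on the $V$‑side gives R‑faces, proving~(b) and showing that $\tilde U$ and $\tilde V$ lie in the only non‑simple faces. Since every critical point of $\tilde D$ lies in $\tilde U\cup\tilde V$, each component of $f^{-1}(U)\cap\tilde D$ other than $\tilde U$ is critical‑point‑free, hence mapped conformally, hence contained in a simple L‑face, and likewise for $V$.

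Finally, since $\gamma$ contains no critical values, $f^{-1}(\gamma)\cap\tilde D$ is a disjoint family of arcs with endpoints on $\partial\tilde D$, and cutting the simply‑connected $\tilde D$ along them yields the faces together with a dual \emph{tree} $G$. Each such arc separates a preimage of $D_L$ from one of $D_R$, so L‑ and R‑faces alternate along $G$; as the simple faces are leaves, $G$ is the edge joining the $\tilde U$‑face and the $\tilde V$‑face, with leaves attached only at these two vertices. That edge is a single $f^{-1}(\gamma)$‑arc $\tilde\gamma$, and since the $\tilde U$‑ and $\tilde V$‑faces are unbounded with their tracts running off in opposite directions along the strip, $\tilde\gamma$ must join $\partial^+\tilde D$ to $\partial^-\tilde D$ --- this is~(c). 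Consequently every simple L‑face, hence every simple preimage of $U$, is adjacent to the $\tilde V$‑face (and symmetrically for $V$); reading off, in positive orientation along $\partial^-\tilde D$ starting at $\tilde\gamma$, the simple L‑faces whose closures meet $\partial^-\tilde D$ gives $(\tilde U^-_j)_{j=1}^{\infty}$, and analogously the three remaining families --- these sequences are infinite because, for $F$ the $\tilde V$‑ (resp.\ $\tilde U$‑) face, $\partial F$ covers $\partial D_R$ (resp.\ $\partial D_L$) with infinite degree and so meets infinitely many arcs of $f^{-1}(\gamma)$. The delicate point throughout is the bookkeeping at the two logarithmic ends: that each tract contributes exactly one boundary line of $\tilde D$, and that $F_U,F_V$ have precisely the stated adjacency pattern, despite $f\colon\tilde D\to D$ having infinite degree so that the Riemann--Hurwitz count of Proposition~\ref{prop:picture} is unavailable. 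I would handle this using the explicit local model $w\mapsto e^{w}$ near each tract, supplied by Lemma~\ref{lem:f}, off of which $f$ restricts to an honest finite‑degree branched covering to which the vanilla arguments apply verbatim; an alternative is to exhaust $\tilde D$ by finite‑degree pieces obtained by truncating the tracts far out.
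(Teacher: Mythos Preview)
Your argument is essentially correct and follows the same overall architecture as the paper's proof: both analyse the annulus $A_L=D_L\setminus\overline{U}$ (and its $R$-counterpart), observe that its preimage components are covers, deduce that each face contains a unique preimage of $U$ or $V$, identify the two non-simple faces as those containing $\tilde U$ and $\tilde V$, and use connectivity of $\tilde D$ to force these to be adjacent with all simple faces hanging off them.

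The one genuinely different choice is in part~\ref{item:boundary}. You count the boundary lines of $\tilde D$ by matching them with transcendental singularities over points of $D$: each unbounded boundary component forces an end of $\tilde D$, hence an asymptotic value in $\overline{D}$, hence (since $\partial D\cap S(f)=\emptyset$) one of the two logarithmic tracts over $\pm 1$; uniqueness of the tracts then gives exactly two lines. The paper instead proves directly that $\partial\tilde U$ (and $\partial\tilde V$) is connected, by slitting $U$ along arcs to each critical value and invoking Lemma~\ref{lem:f} to identify the resulting universal-covering component; from this it reads off that there is a single component of $f^{-1}(A_L)$ adjacent to $\tilde U$, and hence that $\partial\tilde D$ has only two components. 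Your route is shorter in spirit, but the step ``a boundary line forces a logarithmic end over a value \emph{inside} $D$'' needs the end/singularity correspondence spelled out more carefully than you do; the paper's slit argument is more hands-on and avoids this.

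Two smaller points. First, for part~(c) the paper's argument is crisper than yours: since $\tilde U$ and $\tilde V$ each contain an infinite piece of $\R$ (by Definition~\ref{def:partialconfasymptotic}), the separating arc $\tilde\gamma$ must cross $\R$, so its endpoints lie in opposite half-planes. This is simpler than appealing to tracts ``running off in opposite directions''. Second, your tree argument for the dual graph is fine, but note that the paper does not invoke it explicitly here; it just uses that simple faces are leaves together with connectivity of $\tilde D$.
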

\begin{proof}
  Recall that all singular values of $f$ in $D$ lie in $U$ or $V$. Let $D_L$ be the component of $D\setminus \gamma$
     containing $U$. Then $A\defeq D_L\setminus \overline{U}$ is an annulus containing no singular values. 
     Hence $f$ is a covering map on every component of $f^{-1}(A)$. Thus each such component is either
     simply-connected and mapped as a universal covering, or doubly-connected and mapped as a finite
     covering of annuli. In particular, every component of $f^{-1}(A)$ is adjacent to exactly one component of
     $f^{-1}(U)$, and every component of $f^{-1}(D_L)$ contains exactly one component of
     $f^{-1}(U)$. 

   Recall that any component of $f^{-1}(U)$ different from $\tilde{U}$ is mapped conformally by $f$, and hence
    contained in a simple face that is mapped conformally to $D_L$. The analogous statement holds for the preimages
    of $D_R$, so we see that there are exactly two non-simple faces, namely the face $\tilde{D}_L$ containing
    $\tilde{U}$ and the face $\tilde{D}_R$ 
    containing $\tilde{V}$. Since no simple face is adjacent to more than one other face, and $\tilde{D}$ is connected,
    we see that $\tilde{D}_L$ and $\tilde{D}_R$ are adjacent, and that every simple face is 
    adjacent to one of these two. 
    
  Let $\tilde{\gamma}$ be the component of $f^{-1}(\gamma)$ that separates $\tilde{U}$ and $\tilde{V}$ in $\tilde{D}$. 
    Recall that $\R\subset\tilde{D}$, and that $\tilde{U}$ (resp. $\tilde{V}$) contain an infinite piece of the negative (resp. 
    positive) real axis. It follows that $\tilde{\gamma}$ must have one endpoint in the upper half-plane and
    one endpoint in the lower half-plane. 
 
  It remains to establish~\ref{item:boundary}.     
    Since $\partial D$ contains no singular values and $\tilde{D}$ is unbounded, every component of  $\partial\tilde{D}$
     is an injective curve tending to infinity in both directions, and $f$ maps this component to
     $\partial D$ as a universal covering. We must show that there are only two such components.
  To do so, we claim that 
   there is exactly one component of $f^{-1}(A)$ adjacent to $\tilde{U}$ (and similarly for  $\tilde{V}$).
   In other words, $\partial \tilde{U}$ and $\partial \tilde{V}$ are connected. 

   Let $U'$ be obtained from $U$ by removing, for each critical value $c\in U$, an arc $\delta_c$ connecting
     $c$ to $\partial U$, in such a way that these arcs are pairwise disjoint. Let $\tilde{U}'$ be 
     the connected component of $f^{-1}(U')$ containing an infinite piece of the real axis.
     By Lemma~\ref{lem:f}, $f\colon \tilde{U}'\to U$ is a universal covering, and every other component of
      $f^{-1}(U')$ is mapped conformally. Since $\tilde{U}$ contains only finitely many critical points,
      it follows that there are only  finitely many preimage components of $U'$ in $\tilde{U}$. The claim follows
      easily, as does the fact that $\partial\tilde{D}$ has only two connected components.
\end{proof}

 The central part of our construction is a method for extending partial configurations, in the same spirit as
    Proposition~\ref{prop:inductivestep}. 
\begin{prop}[Extending partial configurations]
\label{prop:nextdomain}
Suppose that $(D,U,V)$ is a partial configuration, and let
    $P$ be a connected component of $f^{-1}(U)\cap \tilde{D}$. Then there is
    a partial configuration $(D', U', V)$ such that 
$D \subset D'$, $U \subset U'$ and $P\subset \tilde{U}'$.

   Moreover, for any $R>0$, this configuration can be chosen such that $B(0,R)\subset {D}'$.
\end{prop}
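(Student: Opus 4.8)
The plan is to follow the template of the proof of Proposition~\ref{prop:inductivestep}, using the combinatorial picture of Proposition~\ref{prop:pictureasymptotic} in place of Proposition~\ref{prop:picture}, while simultaneously enlarging $D$ in the manner of the proof of Proposition~\ref{prop:Dn}. There are two tasks: enlarge $D$ to a Jordan domain $D'$ with $B(0,R)\subset D'$ that is eligible to be the first component of a partial configuration for $f$ — which forces $U'$ to absorb every critical value in $D'\setminus D$ — and attach a thin channel to $U$ whose preimage joins $P$, together with all the new critical points, to $\tilde U$. First I would reduce to the case $P=\tilde U_j^{+}$: if $P=\tilde U$ only the first task is needed, and if $P=\tilde U_j^{-}$ I would replace $(D,U,V)$ by its reflection in the real axis — which is again a partial configuration for $f$, since $f(\bar z)=\overline{f(z)}$, and under which $\tilde U_j^{-}$ becomes $\tilde U_j^{+}$ — carry out the construction there, and reflect back, noting that $B(0,R)$ is conjugation-invariant. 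Throughout, I would fix a crosscut $\gamma$ of $D$ separating $\overline U$ from $\overline V$, with endpoints $z,w$ on $\partial D$, and write $\tilde\gamma$ for the preimage arc separating $\tilde U$ from $\tilde V$, as in Proposition~\ref{prop:pictureasymptotic}.

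For the enlargement: since $\CV(f)=\{\zeta_n\}$ is discrete with $\zeta_n\to\infty$, only finitely many critical values lie in $\overline D\cup\overline{B(0,R)}$, each with a single critical preimage; and $\comp_{\R}(f^{-1}(B(0,\rho)))$ increases to $\C$ as $\rho\to\infty$. Hence I would pick a bounded Jordan domain $\hat D\supset\overline D\cup\overline{B(0,R)}$ with $\partial\hat D\cap S(f)=\emptyset$, so large that $\comp_{\R}(f^{-1}(\hat D))$ already contains the critical preimage of every critical value lying in $\overline D\cup\overline{B(0,R)}$. Then, following the Claim inside the proof of Proposition~\ref{prop:Dn}, for each of the finitely many remaining critical values $v\in\hat D$ whose critical preimage escapes $\comp_{\R}(f^{-1}(\hat D))$ — necessarily $v\notin\overline D\cup\overline{B(0,R)}$, and all preimages of $v$ in $\comp_{\R}(f^{-1}(\hat D))$ simple — I would delete a slit $\delta_v\subset\hat D\setminus(\overline D\cup\overline{B(0,R)})$ joining $v$ to $\partial\hat D$, the slits pairwise disjoint and avoiding other critical values, so that their preimages are boundary slits that do not disconnect $\comp_{\R}(f^{-1}(\hat D))$. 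After a slight shrinkage this yields a Jordan domain $D'$ with $\overline D\cup\overline{B(0,R)}\subset D'\subset\hat D$, $\partial D'\cap S(f)=\emptyset$, and $f(\tilde D'\cap\CP(f))=D'\cap\CV(f)$, where $\tilde D'\defeq\comp_{\R}(f^{-1}(D'))$. Let $v_1,\dots,v_m$ be the critical values in $D'\setminus D$ and $c_1,\dots,c_m\in\tilde D'\setminus\tilde D$ their critical preimages.

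The channel would then be built just as in the proof of Proposition~\ref{prop:inductivestep}. Inside $D'\setminus\overline D$ I would first construct an arc $\beta$ from $w$ through $v_1,\dots,v_m$ such that a single component $\tilde\beta$ of $f^{-1}(\beta)$ contains $c_1,\dots,c_m$ and ends at a prescribed preimage of $w$ on $\partial\tilde D$; the prescription follows from the twisting argument of the proof of Proposition~\ref{prop:alpha}, using that $f$ maps each of the two components of $\partial\tilde D$ to $\partial D$ as a universal covering, so the preimages of $w$ on $\partial\tilde D$ form a full orbit under twists near $w$. Next I would construct an arc $\Gamma$ from $\mu\in\partial U$ that winds $N$ times around $\overline V\cup(\beta\setminus\{w\})$ negatively (as does $\Gamma_0$ in Proposition~\ref{prop:inductivestep}), then enters $D$ at $z$ and runs along $\gamma$ to $w$; this fixes the homotopy class of $\Gamma$ in $D'\setminus\CV(f)$. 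Taking $W$ to be a thin channel around the arc $\Gamma\cup\beta$, small enough that $U'\defeq U\cup W$ is simply-connected with $\overline{U'}\cap\overline V=\emptyset$ and $\overline{U'}\cup\overline V\subset D'$ (possible since $\Gamma\cup\beta$ is a single arc meeting $\overline U$ only at $\mu$ and disjoint from $\overline V$), I would set this as the new $U'$. Then $(D',U',V)$ satisfies Definition~\ref{def:partialconfasymptotic}: all parts are immediate except that $D'\cap\CV(f)\subset U'\cup V$ holds because the old critical values lie in $U\cup V$ and $v_1,\dots,v_m\in\beta\subset W$, and except for part~(f), which — since the critical points of $\tilde D$ already lie in $\tilde U\cup\tilde V$ — amounts to showing $c_1,\dots,c_m\in\tilde U'$. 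Finally, exactly as in Proposition~\ref{prop:inductivestep}, I would trace preimages: $\tilde\beta$ strings $c_1,\dots,c_m$ and the marked preimages of $w$ into one component; the components of $f^{-1}(\Gamma)$ are arcs from the boundaries of distinct preimages of $U$ in $\tilde D$ to those marked points, the first beginning on $\tilde U$ and the successive ones — because each loop of $\Gamma$ around $\overline V$ lifts, along the boundary of the face of $\tilde D$ containing $\tilde V$ (which by Proposition~\ref{prop:pictureasymptotic} is an unbounded zigzag meeting the preimage arcs of $\gamma$ adjacent to the $\tilde U_k^{+}$ in order), to a shift by one face — beginning on $\tilde U_1^{+},\tilde U_2^{+},\dots$ in turn; choosing $N$ large enough to reach $\tilde U_j^{+}$ links all of these, together with $P$ and the $c_i$, to $\tilde U$ inside $f^{-1}(U')$, so they lie in $\tilde U'$.

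I expect the main obstacle to be this last step: making rigorous the claim that each loop of $\Gamma$ around $\overline V$ lifts to a ``shift by one face'' along the boundary of the $\tilde V$-face, i.e. organising the combinatorics of that boundary — now an unbounded zigzag meeting infinitely many preimage arcs of $\gamma$ rather than a finite tree of faces — so that the closing argument of Proposition~\ref{prop:inductivestep} carries over, and at the same time checking that the slits $\delta_v$, the arc $\beta$, and the channel $W$ can be routed without interference and that $B(0,R)\subset D'$ survives the final shrinkage.
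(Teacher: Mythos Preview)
Your high-level strategy matches the paper's, and you correctly anticipate that the closing combinatorics is the crux; but the way you have set things up leaves a genuine gap there that ``choose $N$ large enough'' does not fill.

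The paper's proof differs from yours in two coupled ways. First, it \emph{separates} the two tasks: it enlarges $D$ only enough to reach $P$, and only afterwards, in a second and much simpler step using two arcs $\alpha^{+},\alpha^{-}$ (one for each of $\CV^{+}(f)$ and $\CV^{-}(f)$), enlarges further to contain $B(0,R)$. Second --- and this is the point --- in the first step it controls the new critical values precisely: after the reflection it has $P=\tilde U_p^{-}$, and it then adds \emph{exactly} $p$ critical values, all taken from $\CV^{+}(f)$, so that the $p$ new critical points all lie in the \emph{upper} half-plane, on the opposite side of $\tilde\gamma$ from $P$; and it winds $\Gamma$ exactly $p$ times. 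This matching is what makes the analogue of Proposition~\ref{prop:inductivestep} go through: with the new discs attached only on the upper side, the portion of the infinite preimage curve of the basic loop $\Gamma_0$ that runs along the lower boundary of $\tilde D$ is undisturbed, and the $p$ windings produce exactly the chain linking $\tilde U$, the $p$ new discs (hence $c_1,\dots,c_p$), and $\tilde U_1^{-},\dots,\tilde U_p^{-}=P$ via $\tilde\beta$.

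In your approach, by contrast, you absorb $B(0,R)$ in one go, so the new critical values necessarily come from both $\CV^{+}$ and $\CV^{-}$ (any large ball meets both), and the new preimage discs are attached on both sides. This disrupts the sequence $\tilde U_1^{+},\tilde U_2^{+},\dots$ you want to walk along: some of your $N$ windings step onto a new disc on the $+$ side rather than onto the next $\tilde U_k^{+}$. Only $m+1$ preimage arcs of $\Gamma$ land on $\tilde\beta$, and which preimages of $U$ they originate from depends jointly on $N$ and on the interleaving of the new discs with the $\tilde U_k^{\pm}$; there is no reason both $\tilde U$ and $P$ are among them, and no separate argument that the new critical points in the \emph{lower} half-plane land in $\tilde U'$. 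The paper sidesteps all of this by decoupling the two tasks.
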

\begin{proof}
  We first prove the proposition without the final statement. 
  We may assume that $\tilde{P}\neq \tilde{U}$ (otherwise, there is nothing to prove). 
   Let $\gamma$ be as in 
     Proposition~\ref{prop:pictureasymptotic} and recall that $P$ is contained in a simple face adjacent to 
    the face $\tilde{D}_R$ containing $\tilde{V}$. The boundary of this simple face consists of a preimage of
    $\gamma$ and a piece of $\partial \tilde{D}$; we may assume without loss of generality that this 
    piece belongs to the lower boundary of $\tilde{D}$. (Otherwise, since $f(\overline{z}) = \overline{f(z)}$, for $z \in \C$, we can replace $D$,  $U$ and $V$ by their 
    reflections in the real axis.)
   
   Let $p\geq 1$ be such that the $P=\tilde{U}^-_p$.  (In Figure~\ref{fig:10}, $p=2$.) 
   We now extend $D$ to a Jordan domain $D'\supset \overline{D}$ such that:
\begin{itemize}
\item $\partial D' \cap \CV(f) = \emptyset$;
\item $\#((D' \setminus D) \cap CV^+(f)) = p$;
\item $\#((D' \setminus D) \cap CV^-(f)) = 0$;
\item $f(\tilde{D'} \cap \CP(f)) = D' \cap \CV(f)$, where $\tilde{D}=\comp_{\R}(f^{-1}(D'))$. 
\end{itemize}
 This can be  achieved by the same technique as in the proof of Proposition~\ref{prop:Dn}:
    First extend $D$ to $D_1$ such that $\tilde{D}_1\defeq \comp_{\R}(f^{-1}(D_1))$ contains at least $p$
    points of $\CP(f)\setminus \tilde{D}$ whose images have 
    positive imaginary part. Then remove all critical values whose critical preimages are not
    in $\tilde{D}_1$ as in Proposition~\ref{prop:Dn}. We can also remove arcs connecting the points of $\CV^-(f)\cap D_1$ to $\partial D_1$ without intersecting $D$.  
    Observe that the critical preimage of any such arc lies in the lower half-plane, and hence does not separate $\tilde{D}\supset\R$ from any 
    of the critical points in the upper half-plane. Finally, remove any excess critical values in $\CV^+(f)$ exactly as in Proposition~\ref{prop:Dn}.

It now remains to extend $U'$ to $U$. 
   Let $w$ be the endpoint of ${\gamma}$ in the upper half-plane, and $z$ its endpoint in the lower half-plane.
 As in the proof of Proposition~\ref{prop:inductivestep}, we construct an arc $\alpha$ 
  joining $w$ to $\partial D'$ so that all the critical values in $D'\setminus D$ lie on $\alpha$, and such that one connected component $\tilde{\alpha}$ of
  $f^{-1}(\alpha)$ contains all points of $\CP(f)\cap \tilde{D}' \setminus \tilde{D}$. Then we let $\beta$ 
  be the sub-arc of $\alpha$ starting at $w$ and ending at the last critical value on $\alpha$.

    Consider an arc $\Gamma$, not intersecting $\overline{V} \cup \CV(f)$ and not intersecting  $\overline{U}$ except in
      one endpoint,  defined as follows. The arc starts at some point $\mu\in\partial U$, 
      and runs around $\beta$ and $\overline{V}$ exactly $p$ times in negative orientation.
      On its last loop it enters $D$ at
      the point $z$, traversing along the arc $\gamma$ and ending at $w$. 

       \begin{figure}
      	\includegraphics[width=\textwidth]{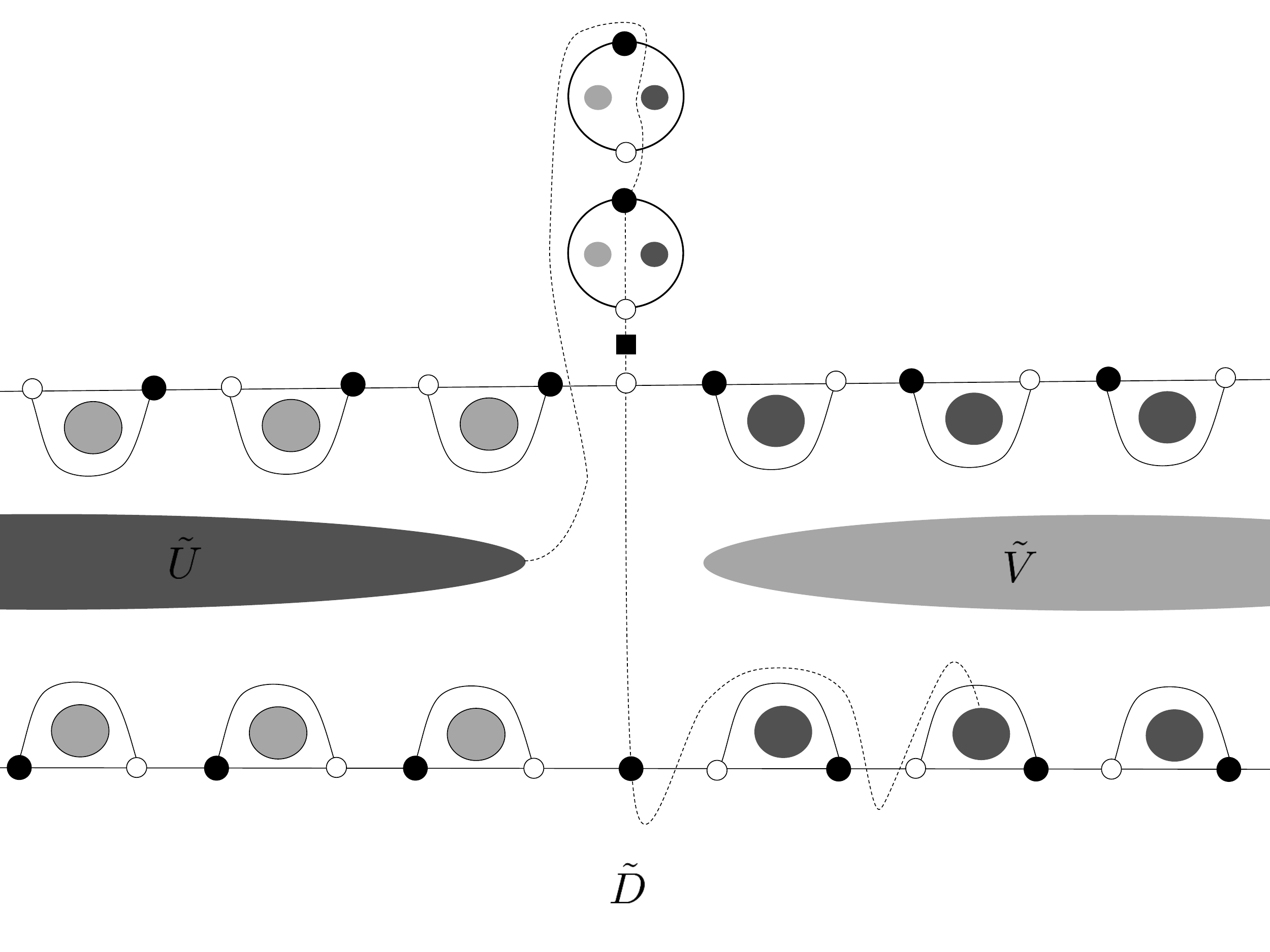}
        \caption{The extended preimage in Proposition~\ref{prop:pictureasymptotic}, in the case where $p=2$. 
        The image configuration  is the same as the right-hand side of Figure~\ref{fig:5}. 
        The preimage of the thin channel $W$ starts at $\tilde{U}$, passes through two preimages of $D$, meets a critical point (shown as a square), and ends on $\tilde{U}^-_2$. Note that other preimages of critical values, shown in Figure~\ref{fig:5}, are not shown here, for simplicity.}\label{fig:10}
      \end{figure}

         Take a thin channel $W$ containing the Jordan arc $\Gamma \cup \beta$, in such
          a way that $U' \defeq U \cup W$ is simply-connected. 
         By an argument similar to that in the proof of Proposition~\ref{prop:inductivestep}
         it can be shown that $\tilde{U}'$ contains both $\tilde{U}$ and $\tilde{U}^-_p$; see Figure~\ref{fig:10}.
         This completes the construction.

  Now let us prove the final claim of the proposition. As in Proposition~\ref{prop:Dn}, we can find $D''\supset D'$ such that 
    $\tilde{D}''\defeq \comp_{\R}(f^{-1}(D''))$ contains all critical preimages of points in $\CV(f)\cap D''$, and such that furthermore
    $B(0,R)\subset {D}''$. We can find an arc $\alpha^+\subset D''\setminus D'$, having one endpoint on $\partial D'$ and another in $D''\setminus D'$, and 
    passing through all points of $(D'' \setminus D')\cap \CV^+(f)$, with the following property: there is a connected component of $f^{-1}(\alpha)$ that 
    intersects $\partial{\tilde{D}'}$ and contains all critical preimages of the critical values in $\alpha^+$. We can find a similar curve $\alpha^-$ (disjoint from $\alpha^+$) for 
   the critical values in $(D'' \setminus D')\cap \CV^-(f)$. By extending $U'$ appropriately to include these two arcs, we easily obtain $U''\supset U'$ such that 
    $\tilde{U}''\defeq \comp_{\tilde{U}'}(f^{-1}(U''))$ contains $\CP(f)\cap (\tilde{D}''\setminus \tilde{D}')$. We leave the details to the reader.  
\end{proof}
\begin{rmk}[Symmetry]\label{rmk:symmetry}
  Recall that $f$ is odd. Hence, if $(D,U,V)$ is a partial configuration for $f$, then so is 
    $(-D,-V,-U)$. It follows that Proposition~\ref{prop:nextdomain} holds also with the roles of $U$ and $V$ exchanged. 
\end{rmk}
Applying the preceding step inductively, we obtain the following fact, which
   easily implies Theorem~\ref{thm:AVexample}.
\begin{prop}[Sequence of configurations]
\label{prop:mainconstructionasymptotic}
  There is a sequence 
     $(D_k, U_k , V_k)_{k=1}^{\infty}$ of partial configuration{s} such that 
    $(D_k)$, $(U_k)$ and $(V_k)$ are increasing sequences, and such that every component of $f^{-1}(U_1)$ is contained in
     $\tilde{U_k}$ for sufficiently large $k$, and similarly for the components of $f^{-1}(V_1)$.
\end{prop}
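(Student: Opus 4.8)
The plan is to build the sequence $(D_k, U_k, V_k)$ recursively, alternating the roles of $U$ and $V$ in the same spirit as Corollary~\ref{cor:mainconstruction}, but now using Proposition~\ref{prop:nextdomain} (together with Remark~\ref{rmk:symmetry}) as the inductive step, and additionally exhausting $\C$ by requiring $B(0,k)\subset D_k$. To anchor the recursion, I would take $D_1$ to be a small Jordan domain containing $[-1,1]$ whose boundary avoids $S(f)$ and which meets $\CV(f)$ in at most the two critical values $\zeta_0$ and $\zeta_{-1}$ closest to the real axis; let $U_1\ni -1$ be a small disc around $-1$ together with a slit capturing $\zeta_{-1}$ (if present) and $V_1\ni 1$ a disjoint small disc around $1$ capturing $\zeta_0$ (if present), arranged so that $\overline{U_1}\cup\overline{V_1}\subset D_1$, $D_1\cap\CV(f)\subset U_1\cup V_1$, $\tilde D_1\defeq\comp_\R(f^{-1}(D_1))$ satisfies $f(\tilde D_1\cap\CP(f))=D_1\cap\CV(f)$, and the real-axis components $\tilde U_1,\tilde V_1$ of the preimages contain all of $\tilde D_1\cap\CP(f)$. (Since $f$ maps $\{iy:y>0\}$ and $\{iy:y<0\}$ both onto the whole imaginary axis, and the critical values are interleaved on the imaginary axis, such an anchoring is possible after possibly shrinking $D_1$; this is essentially the base case of Proposition~\ref{prop:Dn} adapted to respect the symmetry.) It is then routine to check $(D_1,U_1,V_1)$ is a partial configuration in the sense of Definition~\ref{def:partialconfasymptotic}.

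For the inductive step, suppose $(D_k, U_k, V_k)$ has been constructed. To make sure \emph{every} component of $f^{-1}(U_1)$ eventually lands in $\tilde U_k$, I would enumerate the components of $f^{-1}(U_1)$ and of $f^{-1}(V_1)$ as $P_1^U, P_2^U,\dots$ and $P_1^V, P_2^V,\dots$, and on the $k$-th step handle one more of each in the style of a back-and-forth argument. Concretely: if $k$ is odd, let $P$ be the first component of $f^{-1}(U_1)$ not yet known to lie in $\tilde U_k$; by Proposition~\ref{prop:pictureasymptotic}, $P$ lies in some simple face of $\tilde D_k$ — but we need $P\subset\tilde D_k$, which holds for all sufficiently large indices since $\tilde D_n$ exhausts $f^{-1}(\C)$ — so we may assume $P\subset\tilde D_k$. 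Apply Proposition~\ref{prop:nextdomain} with $R=k+1$ to obtain a partial configuration $(D', U', V_k)$ with $D_k\subset D'$, $U_k\subset U'$, $B(0,k+1)\subset D'$, and $P\subset\tilde U'$; set $D_{k+1}\defeq D'$, $U_{k+1}\defeq U'$, $V_{k+1}\defeq V_k$. If $k$ is even, do the symmetric thing using Remark~\ref{rmk:symmetry}: enlarge $V_k$ to absorb the next component of $f^{-1}(V_1)$ not yet captured, while keeping $U_{k+1}=U_k$, and still arranging $B(0,k+1)\subset D_{k+1}$. In either case $(D_k),(U_k),(V_k)$ are increasing by construction and $\bigcup_k D_k=\C$.

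Finally I would verify the "eventually contained" conclusion. Fix a component $W$ of $f^{-1}(U_1)$. Since $D_n\uparrow\C$ and $\tilde D_n=\comp_\R(f^{-1}(D_n))\uparrow f^{-1}(\C)$, there is $N$ with $W\subset\tilde D_{n}$ for all $n\ge N$ (using that $W$ is connected and $f^{-1}(D_N)\supset W$ once $D_N$ is large enough, noting $W$ is itself connected and $f(W)=U_1\subset D_N$, so $W$ lies in a single component of $f^{-1}(D_N)$, which is $\tilde D_N$ because $U_1$ meets the interval $[-1,1]\subset D_N$ and hence $W$ is joined to $\R$ through $f^{-1}(D_N)$). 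In the back-and-forth bookkeeping, $W$ is assigned a position in the enumeration of components of $f^{-1}(U_1)$, and at the first odd step $k\ge N$ at which $W$ is the first not-yet-absorbed component, the application of Proposition~\ref{prop:nextdomain} places $W\subset\tilde U_{k+1}$; since $\tilde U_{k+1}\subset\tilde U_{k'}$ for $k'>k$ (the $U_k$ are increasing, and the real-axis preimage components are nested), $W\subset\tilde U_k$ for all large $k$. The argument for $f^{-1}(V_1)$ is identical via the even steps and Remark~\ref{rmk:symmetry}.

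The main obstacle I anticipate is the bookkeeping that guarantees \emph{every} component of $f^{-1}(U_1)$ is eventually absorbed: Proposition~\ref{prop:nextdomain} only absorbs one prescribed component $P$ at a time, and enlarging $U$ may create new components of $f^{-1}(U_1)$-type behaviour or at least re-index the simple faces, so one must argue that the components already absorbed stay absorbed (monotonicity of $\tilde U_k$, which follows because $U_k$ increases and $\tilde U_k$ is \emph{defined} as the real-axis component) and that the enumeration is genuinely exhausted (each fixed $W$ eventually becomes "the first unabsorbed one" because only finitely many components precede it and each gets handled on its own odd step). A secondary technical point is checking that the anchoring triple $(D_1,U_1,V_1)$ can be built so that $f(\tilde D_1\cap\CP(f))=D_1\cap\CV(f)$ while keeping $-1\in U_1$, $1\in V_1$ and the closures disjoint — but this is the same slit-removal construction as in Proposition~\ref{prop:Dn}, now performed symmetrically about $\R$, so it should go through without difficulty.
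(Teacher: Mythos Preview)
Your overall strategy is exactly the paper's: anchor with a small configuration, alternate odd/even applications of Proposition~\ref{prop:nextdomain} (using Remark~\ref{rmk:symmetry} for the $V$-steps), and force $B(0,k)\subset D_k$ so that the $\tilde D_k$ exhaust the plane. Two points are worth tightening.

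\emph{The anchor is simpler than you make it.} All critical values of $f$ are purely imaginary with nonzero imaginary part (Lemma~\ref{lem:f}), so a sufficiently thin Jordan neighbourhood $D_1$ of $[-1,1]$ satisfies $D_1\cap\CV(f)=\emptyset$. Then plain round discs $U_1\ni -1$ and $V_1\ni 1$ work with no slits; the conditions in Definition~\ref{def:partialconfasymptotic} are immediate. This is what the paper does.

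\emph{The enumeration and the ``we may assume'' step.} Your scheme enumerates the components of $f^{-1}(U_1)$ directly and at each odd step targets the first unabsorbed one. But Proposition~\ref{prop:nextdomain} needs $P\subset\tilde D_k$, and your ``so we may assume $P\subset\tilde D_k$'' is not justified: at the moment $P$ first becomes the leading unabsorbed component there is no reason it already lies in $\tilde D_k$. (Your later justification that ``$U_1$ meets $[-1,1]$ and hence $W$ is joined to $\R$ through $f^{-1}(D_N)$'' is a non sequitur: only the component $\tilde U_1$ meets $\R$; the simple preimages of $U_1$ do not.) This is easily patched---e.g.\ do a vacuous step enlarging only $D$ whenever $P\not\subset\tilde D_k$---but the paper sidesteps it entirely by enumerating not the components of $f^{-1}(U_1)$ but the \emph{face positions}: fix an enumeration $(k(\ell),j(\ell),\sigma(\ell))$ of $\N\times\N\times\{+,-\}$ with $k(\ell)\le\ell$, and at step $k=2\ell$ absorb the component of $f^{-1}(U_{k-1})$ containing the face $(\tilde U_{k(\ell)})_{j(\ell)}^{\sigma(\ell)}$ of the \emph{earlier} configuration $(D_{k(\ell)},U_{k(\ell)},V_{k(\ell)})$. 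Since $k(\ell)\le\ell\le k-1$, that face automatically sits inside $\tilde D_{k-1}$, so Proposition~\ref{prop:nextdomain} applies without any extra check. The final verification is then immediate: any component $W$ of $f^{-1}(U_1)$ eventually lands in some $\tilde D_{k_0}$ (since the $\tilde D_k$ exhaust $\C$), hence in some face $(\tilde U_{k_0})_j^\sigma$, and the step indexed by $(k_0,j,\sigma)$ absorbs it.

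One further small mismatch: Proposition~\ref{prop:nextdomain} takes as input a component of $f^{-1}(U_k)\cap\tilde D_k$, not of $f^{-1}(U_1)$; you should pass to the $f^{-1}(U_k)$-component containing your chosen $f^{-1}(U_1)$-component before invoking it (as the paper does).
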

\begin{proof}
   Let $U_1$ be a small disc containing $-1$, let $V_1$ be a small disc containing $1$, and let $D_1$ be a 
   simply-connected neighbourhood of $[-1,1]$ containing $U_1 \cup V_1$ and such that $D_1 \cap \CV(f) = \emptyset$. 
   It is easy to see that $(D_1, U_1, V_1)$ is a partial configuration. 

 Let $(k(\ell),j(\ell),\sigma(\ell))_{\ell=1}^{\infty}$ be an enumeration of 
    the countable set $\N\times\N\times \{-,+\}$. We may assume that
    $k(\ell)\leq \ell$ for all $\ell$. We now describe how we construct $(D_k,U_k,V_k)$ inductively
   from $(D_{k-1}, U_{k-1}, V_{k-1})$, for $k\geq 2$. 

 First suppose that $k$ is even, say $k=2\ell$ with $\ell\geq 1$. Set $V_k\defeq V_{k-1}$. 
   Let $P$ be the connected component of 
   $f^{-1}(U_{k-1})$ that contains $(\tilde{U}_{k(\ell)})_{j(\ell)}^{\sigma(\ell)}$. 
   (Recall the notation for the connected components of $f^{-1}(U_{k(\ell)})\cap \tilde{D}_{k(\ell)}$ from
    Proposition~\ref{prop:pictureasymptotic}.) 
    We apply Proposition~\ref{prop:nextdomain} to obtain a partial configuration $(D_k,U_k,V_k)$ such that 
     $B(0,k)\subset {D}_k$ and 
   \[ (\tilde{U}_{k(\ell)})_{j(\ell)}^{\sigma(\ell)} \subset \tilde{U}_k. \]

 Now suppose that $k$ is odd, say $k=2\ell+1$. Using Remark~\ref{rmk:symmetry}, we proceed exactly as in 
    the previous step, but with the roles of $U$ and $V$ exchanged. So we obtain 
   a partial configuration $(D_k,U_k,V_k)$ such that 
    $B(0,k)\subset D_k$ and 
   \[ (\tilde{V}_{k(\ell)})_{j(\ell)}^{\sigma(\ell)} \subset \tilde{V}_k. \]

 This completes the inductive construction. Let $P$ be any connected component of 
   $f^{-1}(U_1)$. We must show that $P\subset \tilde{U}_k$ for sufficiently large $k$. 

By construction, $P\cap \tilde{D}_{k_0}\neq\emptyset$ for some $k_0$.
    Since $P\cap \partial \tilde{D}_{k_0}=\emptyset$, in fact $P\subset \tilde{D}_{k_0}$.     
    If $P \subset \tilde{U}_{k_0}$, then we are done. Otherwise, $P\subset (\tilde{U}_{k_0})_j^{\sigma}$ for some $j\geq 1$ and $\sigma\in \{-,+\}$. Let 
    $\ell$ be such that $k(\ell)=k_0$, $j(\ell)=j$ and $\sigma(\ell)=\sigma$; recall that $\ell\geq k_0$. So, by construction,
    \[ P \subset (\tilde{U}_{k(\ell)})_{j(\ell)}^{\sigma(\ell)} \subset \tilde{U}_{2\ell} \subset \tilde{U}_k \]
    for $k\geq 2\ell$. The same argument applies to preimage components of $f^{-1}(V_1)$, and the proof is complete. 
\end{proof} 

\begin{proof}[Proof of Theorem~\ref{thm:AVexample}]
   Set \[U\defeq\bigcup_{k\in\N} U_k\quad\text{and}\quad V\defeq \bigcup_{k\in\N} V_k.\] 
   The result follows in exactly the same
    way as in the proof of Theorem~\ref{thm:maintopological}.
\end{proof}

\begin{rmk}[More asymptotic values]
  For any  $2\leq d \leq\infty$, 
    it should be possible to use a similar construction to obtain an entire function  $f$ having $d$ logarithmic asymptotic 
    values $(a_j)_{j=1}^{d}$, and pairwise disjoint simply-connected domains $(U_j)_{j=1}^d$ with  $a_j\in U_j$, such
    that $f^{-1}(U_j)$ is connected for all~$j$.
\end{rmk}

%
%
%
%
\section{The error in Baker's proof}
\label{sect:bakerflaw}

       \begin{figure}
      	\includegraphics[width=\textwidth]{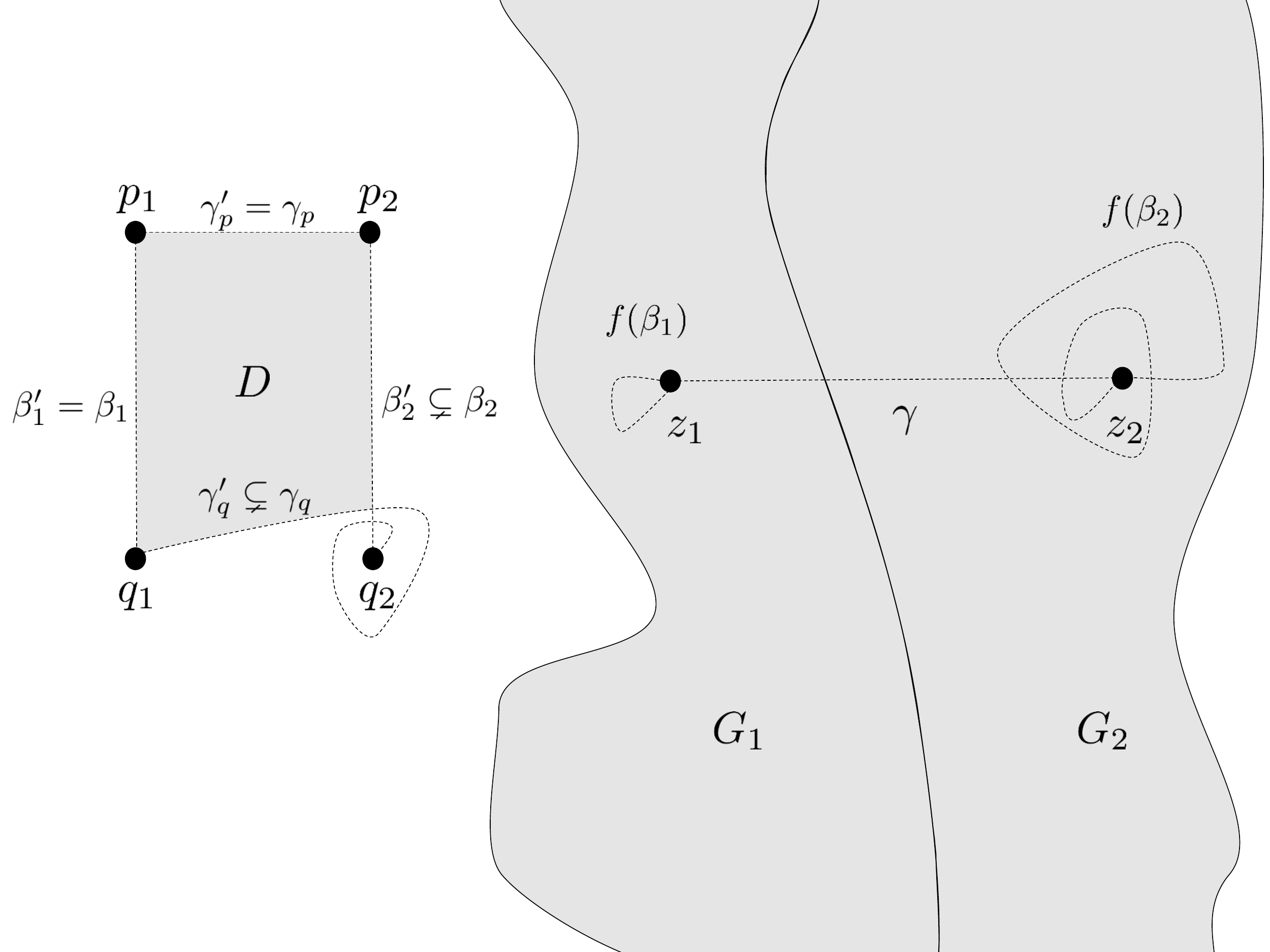}
        \caption{This figure
         displays the idea in the proof in \cite{baker1970}. 
         The left-hand side shows the domain of the function, and the right-hand side shows the image. 
         The domains $D, G_1$ and $G_2$ are shaded. The images $f(\beta_1)$ and $f(\beta_2)$ are shown as dotted lines.}\label{fig:7}
      \end{figure}

In this section we briefly outline the proof in \cite{baker1970}, and highlight where the error occurs. As mentioned earlier, the proof in \cite{baker1970} amounts to a positive answer to Question~\ref{ques:main}, so we suppose that $f$ is a transcendental entire function, and that $G_1$ and $G_2$ are disjoint simply-connected domains each with connected preimage. Baker attempts to deduce a contradiction from this.

Baker uses a well-known result, known as the Gross star theorem~\cite[p.~292, \P~247]{nevanlinnagerman}, which we also use later.
\begin{theorem}[Gross star theorem]
\label{theo:gross}
Suppose that $f$ is a transcendental entire function and that $\phi$ is a holomorphic branch of the inverse of $f$ defined in a neighbourhood of a point $w$. Then, for almost all $\theta\in [0,2\pi)$, the branch $\phi$ can be continued analytically along the ray
   $\{w+ t\cdot e^{i\theta}\colon t\geq 0\}$.
\end{theorem}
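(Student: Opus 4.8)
The plan is to run the classical length--area argument underlying Gross's theorem (see \cite[p.~292, \P~247]{nevanlinnagerman}). After translating the target we may assume $w=0$; set $z_0\defeq\phi(0)$, a regular point of $f$ with $f(z_0)=0$. \emph{Step 1: build the star.} Continue the germ $(\phi,0)$ along each ray $\rho_\theta\defeq\{te^{i\theta}\colon t\geq 0\}$, and let $T(\theta)\in(0,\infty]$ be the supremum of $t$ up to which the continuation exists. The continuations paste together to a holomorphic map $\Phi$ on the star-shaped domain $G^*\defeq\{te^{i\theta}\colon 0\leq t<T(\theta)\}$ extending $\phi$; since $f\circ\Phi=\mathrm{id}$, the map $\Phi$ is injective, hence a conformal bijection $\Phi\colon G^*\to\Omega\defeq\Phi(G^*)\subset\C$ with inverse $f|_\Omega$. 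Writing $\sigma_\theta(t)\defeq\Phi(te^{i\theta})$ we have $f(\sigma_\theta(t))=te^{i\theta}$ and $\sigma_\theta'(t)=e^{i\theta}/f'(\sigma_\theta(t))$. The claim is that $T(\theta)=\infty$ for almost every $\theta$.

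\emph{Step 2: breakdown curves escape to infinity.} First discard the set of $\theta$ for which $\rho_\theta$ meets a critical value of $f$; the critical values form a countable set and each determines a single direction, so this set is null. For the surviving directions, if $T(\theta)<\infty$ then $\sigma_\theta(t)\to\infty$ as $t\uparrow T(\theta)$: otherwise $\sigma_\theta$ would have a finite limit point $z^*$, necessarily a regular point of $f$ with $f(z^*)=T(\theta)e^{i\theta}$, and then the local inverse of $f$ at $z^*$ would continue $\phi$ past $T(\theta)e^{i\theta}$, contradicting maximality. (When $T(\theta)=\infty$ one also has $\sigma_\theta(t)\to\infty$, since $f$ is entire and $f\circ\sigma_\theta\to\infty$.) Thus each surviving $\sigma_\theta$ is an asymptotic path of $f$, with asymptotic value $T(\theta)e^{i\theta}\in\C$ whenever $T(\theta)<\infty$, and distinct $\sigma_\theta$ meet only at $z_0$ because $f$ separates them.

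\emph{Step 3: the measure estimate.} Suppose, for a contradiction, that $E\defeq\{\theta\colon T(\theta)<\infty\}$ has positive measure; passing to a subset we may assume $T(\theta)\leq N$ on $E$ for a fixed $N$, with $m(E)>0$. For $\theta\in E$ the curve $\sigma_\theta$ has infinite length (it tends to $\infty$), yet $f$ maps it into the bounded disc $\{|w|<N\}$. Applying Cauchy--Schwarz to the portions of $\sigma_\theta$ lying in the dyadic annuli $\{2^k<|z|<2^{k+1}\}$, each of which it crosses, together with the polar change of variables $t\,dt\,d\theta=|f'(z)|^2\,dA(z)$ coming from $f(z)=te^{i\theta}$, and integrating over $E$, produces lower bounds on the areas of the subregions $\Phi(\{te^{i\theta}\colon\theta\in E,\ t<T(\theta)\})\cap\{2^k<|z|<2^{k+1}\}$ of $\Omega$, on each of which $f$ is conformal and bounded by $N$. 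The plan is then to sum these inequalities over $k$, exploiting the disjointness of the $\sigma_\theta$ and the simple-connectivity of $\Omega$, to contradict the finiteness of $m(E)$.

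\emph{Main obstacle.} The delicate point is precisely this summation. A naive single application of length--area merely reproduces facts already known, because for transcendental $f$ the geometric ``budget'' $R^2/\max_{|z|\leq R}|f|$ tends to $0$; it is only the careful bookkeeping across shells, with the $f$-images of all the relevant pieces confined to one fixed bounded disc, that forces a contradiction. This is the step I would treat most carefully, following \cite[\P~247]{nevanlinnagerman}. An alternative route is to pass to the Riemann maps $h\colon\D\to G^*$ and $g\colon\D\to\Omega$ with $h=f\circ g$, observe that $h$ is a starlike univalent map and that $E$ corresponds, under the boundary correspondence, to the set where $h$ has a finite radial limit, and argue via the distortion theorems for univalent functions and the F.~and M.~Riesz theorem that this set is null; but then the subtlety migrates into the boundary correspondence of the starlike domain $G^*$, so I would prefer the direct length--area argument.
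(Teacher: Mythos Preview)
The paper does not supply its own proof of this statement: the Gross star theorem is quoted as a classical result with a bare reference to \cite[p.~292, \P~247]{nevanlinnagerman}. So there is no ``paper's proof'' to compare against; your outline is in fact the classical argument that reference contains.

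Your Steps~1 and~2 are correct and standard. Step~3, however, is---as you yourself flag---only a plan rather than a proof: you name the length--area mechanism and the change of variables $t\,dt\,d\theta=|f'(z)|^2\,dA(z)$, but you do not carry out the estimate, and the paragraph headed \emph{Main obstacle} is a discussion of difficulties rather than a resolution of them. As written, the proposal is therefore incomplete at exactly the point where the content lies. If you intend to include a self-contained proof you must actually execute the summation over shells (or an equivalent device) and derive the contradiction with $m(E)>0$; otherwise, doing what the paper does---simply citing Nevanlinna---is entirely appropriate for a result of this vintage.
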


Baker begins by choosing a point $z_1 \in G_1$, with simple preimages $p_1$ and $q_1$. Let 
   $\phi_p$ and $\phi_q$ be the branches of $f^{-1}$ taking $z_1$ to $p_1$ and $q_1$, respectively. 
   By Theorem~\ref{theo:gross}, 
    there is a line segment $\gamma$ joining $z_1$ to a point $z_2 \in G_2$ such that 
    both branches can be continued analytically along $\gamma$. Then
    $\gamma_p\defeq \phi_p(\gamma)$ is an arc joining $p_1$ to a point $p_2\in f^{-1}(z_2)$, and likewise
    $\gamma_q \defeq \phi_q(\gamma)$ joins $q_1$ to a different preimage $q_2\in f^{-1}(z_2)$.
     For each $k \in \{1, 2\}$, we know that $f^{-1}(G_k)$ is connected by assumption, and so we can let 
     $\beta_k \subset f^{-1}(G_k)$ be an arc joining $p_k$ and $q_k$.

The curves $\beta_1$ and $\beta_2$ may intersect the arcs $\gamma_p$ and $\gamma_q$ in some
   interior points.
   Baker notes that by taking suitable subcurves, $\gamma_p'$ of $\gamma_p$, $\gamma_q'$ of $\gamma_q$, $\beta_1'$ of $\beta_1$, and $\beta_2'$ of $\beta_2$, there is a bounded quadrilateral
$D$ with boundary $\beta_1' \cup \gamma_p' \cup \beta_2' \cup \gamma_q'$; see the left-hand side of Figure~\ref{fig:7}. 

Since $G_1$ and $G_2$ are simply-connected, the curves $f(\beta_1)$ and  $f(\beta_2)$ cannot 
   surround $z_2$ and $z_1$, respectively. Observe that
\[
\partial f(D) \subset f(\partial D) \subset f(\beta_1) \cup \gamma \cup f(\beta_2).
\]
 Baker's claim is that an entire function
   cannot map a quadrilateral in this manner, since otherwise $f(D)$ must be unbounded, which is impossible for
    bounded 
    $D$; see Figure~\ref{fig:7}.

However, as pointed out by Duval (personal communication), 
   it is topologically quite possible for $f(D)$ to be bounded,
  in the absence of additional assumptions. This can been seen in the right-hand side of Figure~\ref{fig:6}.

       \begin{figure}
      	\includegraphics[width=\textwidth]{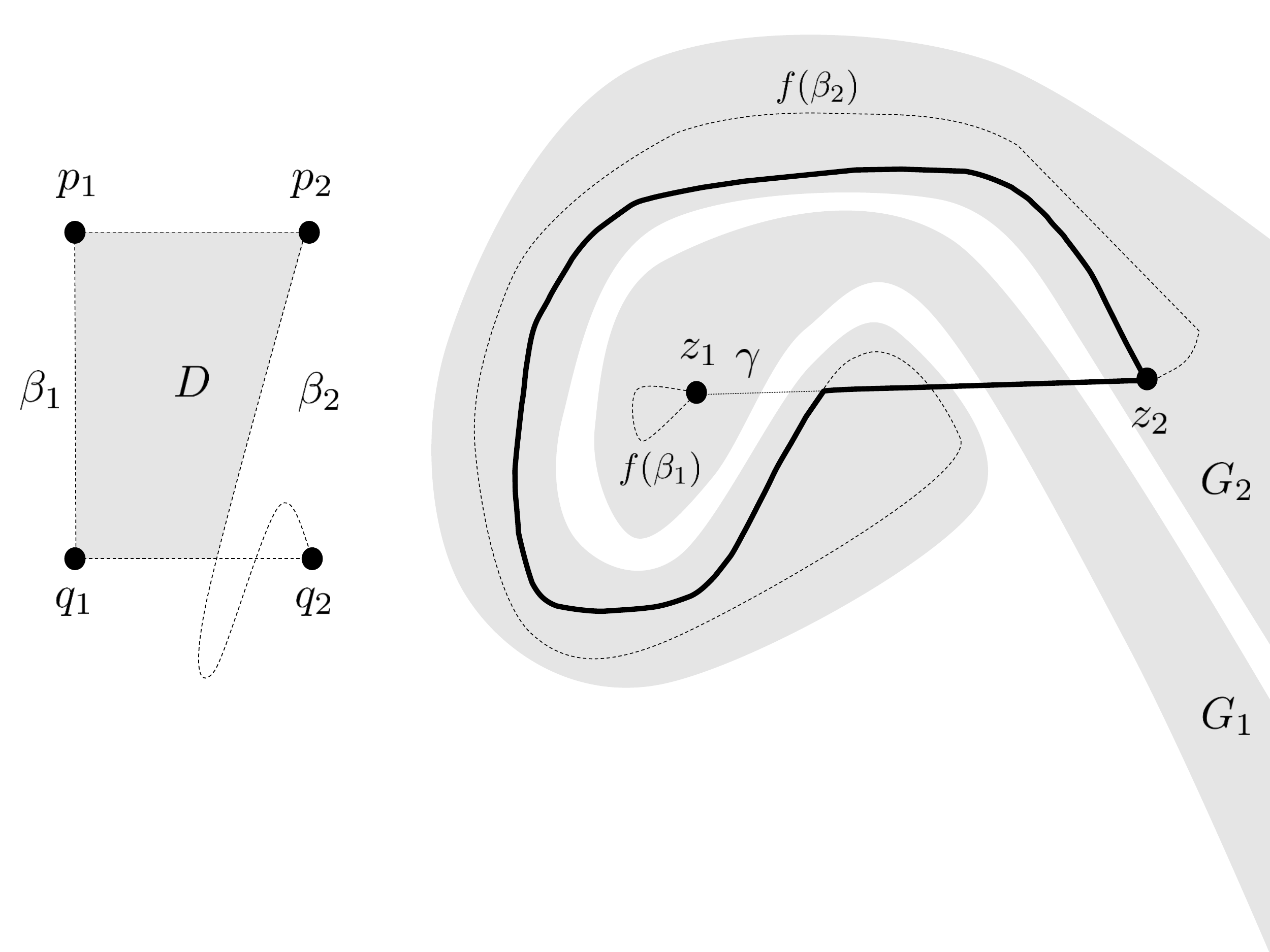}
        \caption{This figure, which is based on one first drawn by Duval,
         displays the error in the proof in \cite{baker1970}. 
         The left-hand side shows the domain of the function, and the right-hand side shows the image. 
         The domains $D, G_1$ and $G_2$ are shaded. The images $f(\beta_1)$ and $f(\beta_2)$ are shown as dotted lines. 
         The boundary of the bounded region $f(D)$ is shown in solid, and is made up of part of $\gamma$ together with 
         part of $f(\beta_2)$.}\label{fig:6}\label{fig:duval}
      \end{figure}

It is interesting to note a key feature of Figure~\ref{fig:6}: although $G_2$ is simply-connected, it
  still ``loops around'' $z_1$ sufficiently far as to intersect $\gamma$ again. It is this ``looping'' which leads to the boundedness of $f(D)$. By examining Figure~\ref{fig:6}, it can be seen this would not occur if we chose the point $z_2 \in \gamma \cap G_2$ in the other component of $\gamma \cap G_2$. This explains how the intricate ``looping'' behaviour arises in the proof in Proposition~\ref{prop:inductivestep}.
  Indeed, consider the case of that proposition where $n=1$ and $m_V=1$. In this case, our construction
  leads exactly to the structure from Figure~\ref{fig:duval}.
  
	\begin{figure}
      	\includegraphics[width=\textwidth]{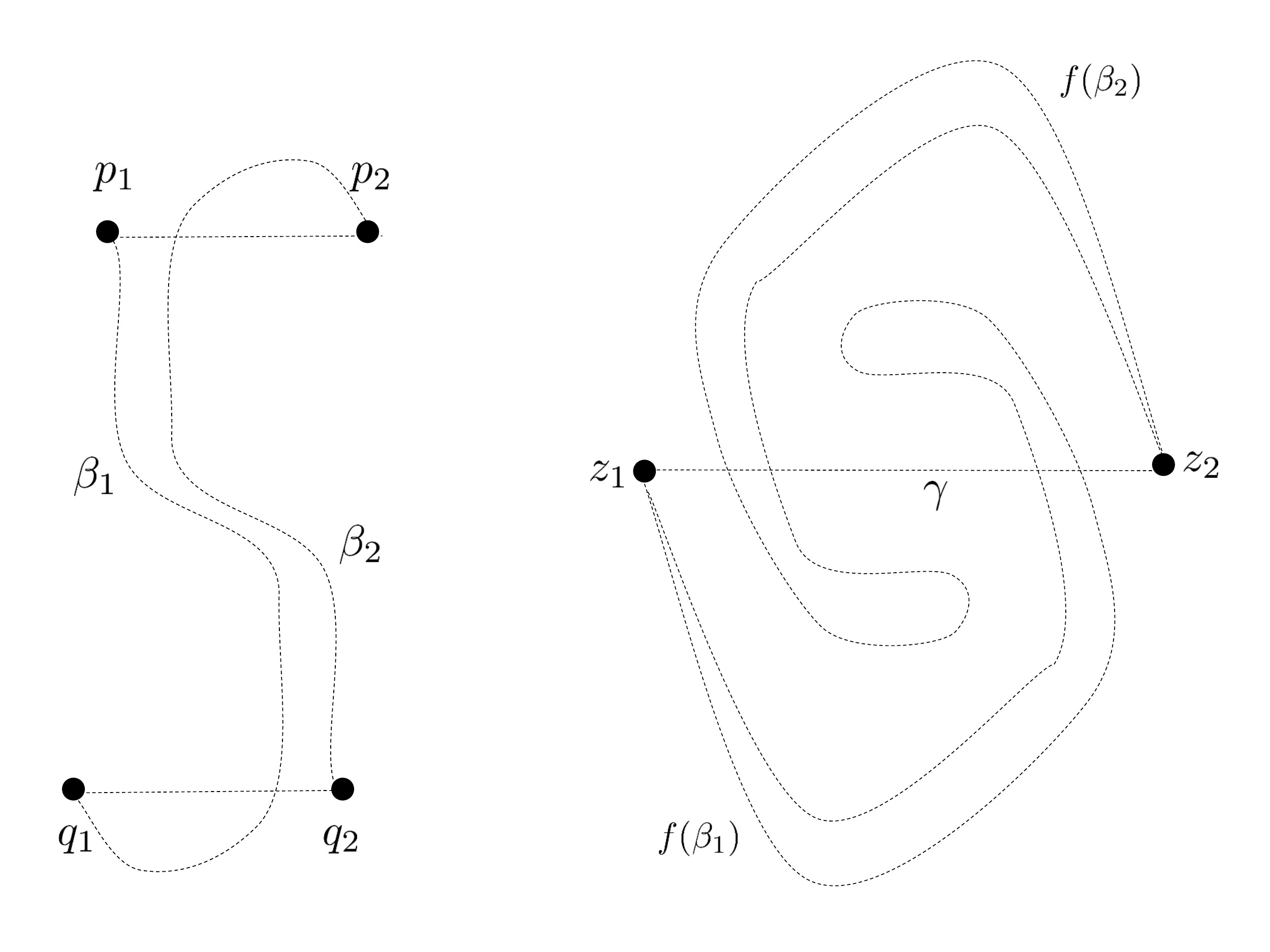}
        \caption{This figure, which is complementary to Figure~\ref{fig:duval}, 
           illustrates a second configuration that leads to Baker's original proof 
              breaking down.}\label{fig:case2}
      \end{figure}
			
We remark that there is another way in which Baker's argument can fail, 
   where instead of one of the domains looping entirely around they other, both
   partly loop around the curve $\gamma$; see Figure~\ref{fig:case2}. 
   The following proposition shows that 
    Baker's method of proof does apply whenever neither kind of looping
    occurs. We shall use it in the next section. 
    
\begin{prop}[Baker's argument]
\label{prop:baker}
Suppose that $f$ is analytic in a simply-connected domain $U$. 
   Suppose that $\gamma_p,\gamma_q\subset U$ are disjoint arcs 
  on both of which $f$ is injective, and with $\gamma \defeq f(\gamma_p) = f(\gamma_q)$. 
  Let $p_1$ and  $p_2$ be the endpoints of $\gamma_p$, and let $q_1$ and  $q_2$ be the 
  endpoints of $\gamma_q$. We may choose the labelling such that $z_1\defeq f(p_1)=f(q_1)$ and 
   $z_2\defeq f(p_2)=f(q_2)$. 

  Let $\beta_1,\beta_2\subset U$ be two arcs 
   such that $\beta_k$ joins $p_k$ and $q_k$, and such that $C_1\defeq f(\beta_1)$ and $C_2\defeq  f(\beta_2)$ 
    are disjoint.

Then every point $\zeta\in \gamma\setminus (C_1\cup C_2)$ 
     is on the boundary of a bounded connected component of 
      $\C\setminus (C_1\cup C_2\cup\gamma)$.
\end{prop}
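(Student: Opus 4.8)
The plan is to make Baker's idea work correctly. Write $\Gamma\defeq C_1\cup C_2\cup\gamma$. I will construct a Jordan domain $D\subset U$ with $f(\partial D)\subset\Gamma$ whose boundary passes through a preimage $a$ of $\zeta$; then $O\defeq f(D)$ is a bounded open set with $\partial O\subset\Gamma$ and $\zeta\in\overline O$, and a local analysis near $\zeta$ will extract the required bounded component of $\C\setminus\Gamma$. For the set-up: since $f$ is injective on the arc $\gamma_p$ it is non-constant, hence open on $U$, and $\gamma=f(\gamma_p)$ is a Jordan arc with endpoints $z_1,z_2$. Because $z_1\in C_1$, $z_2\in C_2$ and $\zeta\notin C_1\cup C_2$, the point $\zeta$ lies in the relative interior of $\gamma$, and there is a unique $a\in\gamma_p$ with $f(a)=\zeta$; it is an interior point of $\gamma_p$ lying on none of $\beta_1,\beta_2,\gamma_q$ (the first two since $\zeta\notin f(\beta_1)\cup f(\beta_2)$, the last since $\gamma_p\cap\gamma_q=\emptyset$). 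Likewise there is a unique $b\in\gamma_q$ with $f(b)=\zeta$, and $a\ne b$. Split $\gamma_p$ at $a$ into sub-arcs $\gamma_p^{-}$ (from $a$ to $p_1$) and $\gamma_p^{+}$ (from $a$ to $p_2$), and $\gamma_q$ at $b$ into $\gamma_q^{-}$ (from $b$ to $q_1$) and $\gamma_q^{+}$ (from $b$ to $q_2$); with the given labelling these assignments are forced via the homeomorphisms $f|_{\gamma_p},f|_{\gamma_q}\to\gamma$. I will also use the standard fact that, since $U$ is simply connected, every bounded component of $\C\setminus K$ is contained in $U$ whenever $K\subset U$ is compact.

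The main step is to produce a Jordan curve through $a$ inside $L\defeq\gamma_p\cup\gamma_q\cup\beta_1\cup\beta_2\subset U$ (note $f(L)=\Gamma$). Consider the two paths $P_1\defeq\gamma_p^{-}\cup\beta_1\cup\gamma_q^{-}$ and $P_2\defeq\gamma_p^{+}\cup\beta_2\cup\gamma_q^{+}$, each running from $a$ to $b$ in $U$. By the classical fact that a path in a Hausdorff space contains a Jordan arc joining its endpoints, choose Jordan arcs $\sigma_1\subset P_1$ and $\sigma_2\subset P_2$ from $a$ to $b$. Since $\beta_1,\beta_2,\gamma_q$ all lie at positive distance from $a$, there is a ball $B$ around $a$ with $P_1\cap B=\gamma_p^{-}\cap B$ and $P_2\cap B=\gamma_p^{+}\cap B$; as $\gamma_p^{-}\cap\gamma_p^{+}=\{a\}$, the arcs $\sigma_1,\sigma_2$ meet inside $B$ only at $a$. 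Let $c^{*}\ne a$ be the first point of $\sigma_2$ met along $\sigma_1$ after leaving $a$; then the sub-arc of $\sigma_1$ from $a$ to $c^{*}$ meets $\sigma_2$ exactly in its two endpoints, and joining it to the sub-arc of $\sigma_2$ from $a$ to $c^{*}$ yields a Jordan curve $J\subset\sigma_1\cup\sigma_2\subset L\subset U$ with $a\in J$.

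Now let $D$ be the Jordan domain bounded by $J$. By the simple-connectivity fact $\overline D\subset U$, so $f(\overline D)$ is compact; hence $f(D)$ is open, $\partial f(D)\subset f(\partial D)=f(J)\subset\Gamma$, and $\zeta=f(a)\in f(\overline D)=\overline{f(D)}$. Thus $O\defeq f(D)$ is bounded and open with $\partial O\subset\Gamma$ and $\zeta\in\overline O$. Since $\zeta$ is an interior point of the Jordan arc $\gamma$ and $\zeta\notin C_1\cup C_2$, there is a small round neighbourhood $V$ of $\zeta$ with $V\cap\Gamma=V\cap\gamma$ a single cross-arc dividing $V$ into Jordan subdomains $V^{+},V^{-}$, each with $\zeta$ on its boundary and each disjoint from $\Gamma$ (here one invokes the tameness of plane Jordan arcs, i.e.\ the Schoenflies theorem). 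As $O\cap V$ is nonempty and open and the cross-arc $V\cap\gamma$ is nowhere dense, $O$ meets one of $V^{\pm}$, say $V^{+}$; since $O\cap V^{+}$ is open and relatively closed in the connected set $V^{+}$ — a point of $\overline{O\cap V^{+}}\cap V^{+}$ not lying in $O$ would lie in $\partial O\subset\Gamma$, contradicting $V^{+}\cap\Gamma=\emptyset$ — we conclude $V^{+}\subset O$. The same reasoning applied to the component $\Omega$ of $\C\setminus\Gamma$ containing the connected set $V^{+}$ gives $\Omega\subset O$, so $\Omega$ is bounded; and $\zeta\in\overline{V^{+}}\subset\overline{\Omega}$ with $\zeta\in\Gamma$, whence $\zeta\in\partial\Omega$, which is the assertion.

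I expect the genuine obstacle to be the second paragraph: the arcs $\beta_1,\beta_2$ may cross $\gamma_p$ and $\gamma_q$ many times (they are at least disjoint from one another, since $C_1\cap C_2=\emptyset$), so one cannot simply take $\partial D=L$, and the difficulty is to pass to simplifying sub-arcs while keeping a preimage of $\zeta$ on the resulting Jordan curve — precisely what the observation that $\sigma_1$ and $\sigma_2$ meet only at $a$ near $a$ secures. The remaining ingredients — openness of $f$, the simple-connectivity fact, and the local structure of $\Gamma$ near $\zeta$ — are routine.
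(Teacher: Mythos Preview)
Your argument is correct and follows the same overall strategy as the paper: produce a bounded domain $D$ in the source with the preimage $a=\zeta'$ of $\zeta$ on $\partial D$ and $f(\partial D)\subset\Gamma$, then use openness and boundedness of $O=f(D)$ together with a local analysis near $\zeta$ to extract a bounded component of $\C\setminus\Gamma$ with $\zeta$ on its boundary.

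The difference lies in how $D$ is obtained and where the local analysis is carried out. The paper simply \emph{asserts} that, since $\zeta'\notin\beta_1\cup\beta_2$, there is a bounded complementary component $D$ of $L=\gamma_p\cup\gamma_q\cup\beta_1\cup\beta_2$ with $\zeta'\in\partial D$, and then works with a small disc around $\zeta'$ in the source. You instead \emph{construct} $D$ explicitly: you split $\gamma_p,\gamma_q$ at $a,b$, build two paths $P_1,P_2\subset L$ from $a$ to $b$, pass to Jordan sub-arcs $\sigma_1,\sigma_2$, and use the first-intersection trick to produce a Jordan curve $J\subset L$ through $a$, taking $D$ to be its interior. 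Your version thus supplies a justification for the step the paper leaves to the reader, and your local analysis (done near $\zeta$ in the image, via the Schoenflies cross-cut $V\cap\gamma$ and the open--closed argument for $O\cap V^{+}$ and $O\cap\Omega$) is a clean way to finish that avoids any discussion of whether $\zeta'$ might be a critical point. Both routes arrive at exactly the same conclusion by the same mechanism; yours is simply more explicit.
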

\begin{proof}
   Let $\zeta\in\gamma\setminus (C_1\cup C_2)$, and let
   $\zeta'$ be the preimage of $\zeta$ on $\gamma_p$. Since
    $\zeta'\notin (\beta_1\cup \beta_2)$, there is a bounded connected component
    $D\subset U$ of $\C\setminus (\gamma_p\cup \gamma_q\cup \beta_1\cup\beta_2)$ with
    $\zeta'\in\partial D$. 

    Then $\partial f(D) \subset f(\partial D) \subset C_1\cup C_2 \cup \gamma$.
Let $\Delta$ be a disc around $\zeta'$ chosen small enough 
       that $f(\Delta)$ does not intersect $C_1\cup C_2$, and let 
       $V$ be the connected component of $f(D\cap \Delta)$ with
       $\zeta\in\partial V$. Since $V\subset f(D)$, it follows that $V$
       is contained in a bounded component of $\C\setminus (C_1\cup C_2\cup \gamma)$, as claimed.
       \end{proof}
       \begin{rmk}[Winding behaviour]
 In order to illustrate how the conclusion of the proposition 
    reflects the ``looping'' behaviour 
    mentioned above (and illustrated in
    Figures~\ref{fig:duval} and~\ref{fig:case2}),
      we remark that it can be reformulated as follows. 
    Suppose that $\zeta\in \gamma\setminus (C_1\cup C_2)$. Then 
    there is $k\in\{1,2\}$ such that either $C_k$ intersects $\gamma$ on
    both sides of $\zeta$, or $C_k\cup\gamma$ contains a Jordan curve
    surrounding $\zeta$.
    
    Indeed, the reformulation clearly implies the conclusion 
      as stated. Conversely, 
        suppose that $\zeta$ is on the boundary of such a bounded component $V$
        of $\C\setminus(C_1\cup C_2\cup \gamma)$.
        We may 
        suppose that $C_1$ and $C_2$ intersect $\gamma$ on different
       sides of $\zeta$, and that neither curve surrounds $\zeta$, as otherwise
        there is nothing to show. Let $\gamma'$ be the connected component of
        $\gamma\setminus (C_1\cup C_2)$ containing $\zeta$; then 
        $\gamma'$ is a cross-cut of the unbounded connected component $W$ of 
        $\C\setminus (C_1\cup C_2)$. 
        
       There must be a second piece $\gamma''$ of $\gamma$ 
        such that $\gamma'\cup \gamma''$ separates $V$ from $\infty$ in $W$. 
         By assumption, there is $k\in\{1,2\}$ such that both endpoints of $\gamma''$ 
           belong to $C_k$. Hence $C_k\cup \gamma''$ separates $\zeta$ from infinity, 
           and the claim follows by choosing a suitable non-intersecting sub-curve.
\end{rmk}

Note that the conclusion of Proposition~\ref{prop:baker} can be 
 strengthened as follows when the curves $\beta_k$ and $\gamma_j$ bound a quadrilateral. 
 (We do not require this fact in the remainder of the paper.)
\begin{rmk}[The case of a quadrilateral]
Suppose that, under the hypotheses of Proposition~\ref{prop:baker}, additionally 
the arcs $\gamma_p$, $\gamma_q$, $\beta_1$ and $\beta_2$ intersect only in their endpoints, and hence bound a quadrilateral $Q$. 
  Then $f(Q)\cup\interior(\gamma)$ is a neighbourhood of $\interior(\gamma)$, and hence 
 no point of $\gamma$ can be connected to infinity without intersecting either $C_1$ or $C_2$. It follows that, in this case, one of the curves $C_1$ and $C_2$ must  surround the other.
\end{rmk}

%
%
%
%
\section{Proof of Theorem~\ref{thm:itdoeshold}}
\label{sect:itdoeshold}
We begin with the following fact, which will be used to deduce Theorem~\ref{thm:itdoeshold}\ref{boundeddomain}.
\begin{prop}[Compact full sets with connected preimages]\label{prop:compactconnected}
Let $f$ be a transcendental entire function, and let $K$ be a 
   compact connected set, containing more than one point, such that $f^{-1}(K)$ is connected. 

    Then $S(f)\cap W = \emptyset$, where $W$ is the unbounded connected component of $\C\setminus K$.
\end{prop}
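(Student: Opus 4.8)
The plan is to argue by contradiction, combining the Gross star theorem (Theorem~\ref{theo:gross}) with Baker's argument in the form of Proposition~\ref{prop:baker}.

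First I would make two reductions. Assuming $S(f)\cap W\neq\emptyset$, and using that $W$ is open while $S(f)$ is a closure, we may choose $v\in W$ that is an actual critical value or finite asymptotic value of $f$. Next, replacing $K$ by $\hat K\defeq \C\setminus W$ (the union of $K$ with the bounded complementary components), we may assume $K$ is \emph{full}, i.e.\ $W=\C\setminus K$: this leaves $W$ unchanged, and $f^{-1}(\hat K)$ is still connected because the closure of every component of $f^{-1}(\hat K\setminus K)$ meets $f^{-1}(\partial K)\subseteq f^{-1}(K)$, a connected set. Note also that, since $K$ is a non-degenerate continuum and $f$ is transcendental, all but at most one point of $K$ has infinitely many preimages and $\CV(f)$ is countable, so we may fix $z_1\in K$ with two distinct simple preimages $p_1\neq q_1$, together with germs $\phi_p,\phi_q$ of $f^{-1}$ at $z_1$ with $\phi_p(z_1)=p_1$ and $\phi_q(z_1)=q_1$.

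Next I would transport a segment towards $v$. Fix a small disc $\Delta=B(v,\eps)$ with $\overline\Delta\subseteq W$, taken small enough to carry a tract over $v$ (respectively a component of $f^{-1}(\Delta)$ on which $f$ has degree $\geq 2$) in the asymptotic (respectively critical) case. By Theorem~\ref{theo:gross}, for almost every $\theta$ both $\phi_p$ and $\phi_q$ continue analytically along the ray $R_\theta=\{z_1+te^{i\theta}:t\geq 0\}$; the set of $\theta$ for which $R_\theta$ meets $\Delta$ has positive measure, so fix such a $\theta$ (perturbing, if necessary, so that $R_\theta$ crosses $\partial\Delta$ transversally) and let $\gamma\subseteq R_\theta$ be the sub-segment from $z_1$ to a point $z_2\in\Delta$. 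Then $\gamma_p\defeq\phi_p(\gamma)$ and $\gamma_q\defeq\phi_q(\gamma)$ are disjoint arcs joining $p_1,q_1$ to preimages $p_2\neq q_2$ of $z_2$ (disjointness: a common point would force $\phi_p\equiv\phi_q$, since $\phi_p,\phi_q$ avoid $\CP(f)$ wherever they continue).

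Finally I would run Baker's argument to obtain the contradiction. Choose an arc $\beta_1\subseteq f^{-1}(K)$ joining $p_1$ to $q_1$, and an arc $\beta_2$ joining $p_2$ to $q_2$ inside a single component of $f^{-1}(\Delta)$; then $C_1\defeq f(\beta_1)\subseteq K$ and $C_2\defeq f(\beta_2)\subseteq\Delta$ are disjoint, as $\overline\Delta\subseteq W=\C\setminus K$. Proposition~\ref{prop:baker} then asserts that every point of $\gamma\setminus(C_1\cup C_2)$ lies on the boundary of a \emph{bounded} component of $\C\setminus(C_1\cup C_2\cup\gamma)$. But take the point $\zeta$ where $\gamma$ first meets $\partial\Delta$: one has $\zeta\in\gamma\setminus(C_1\cup C_2)$ because $C_1$ is disjoint from $\overline\Delta$ and $C_2$ is a compact subset of the open disc $\Delta$, and near $\zeta$ the set $C_1\cup C_2\cup\gamma$ consists of $\gamma$ alone, a single arc crossing $\partial\Delta$ transversally. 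Both local sides of this arc can be joined to infinity without meeting $C_1\cup C_2\cup\gamma$: the outer side runs directly outward through $W$; the inner side runs along a thin collar of $\partial\Delta$ inside $\Delta$ (avoiding the compact set $C_2$ and the slit $\gamma\cap\Delta$), then crosses $\partial\Delta$ well away from $\zeta$ and also escapes through $W$, keeping clear of the bounded set $C_1\cup\gamma$. Hence $\zeta$ lies on the boundary of no bounded component, contradicting Proposition~\ref{prop:baker}.

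The main obstacle is the step of placing $p_2$ and $q_2$ in a \emph{common} component of $f^{-1}(\Delta)$, which is what makes the return arc $\beta_2$ with $f(\beta_2)\subseteq\Delta$ available: a priori the two germs continued from $z_1$ can land on different preimage components of $\Delta$. This is precisely where the hypothesis $v\in S(f)$ must be used — one should exploit the unbounded, infinite-degree component of $f^{-1}(\Delta)$ over the singularity (the tract, in the asymptotic case), together with a further application of the Gross star theorem, to steer the continued germs into it. Securing the arc $\beta_1\subseteq f^{-1}(K)$ (path-connectedness of the relevant part of $f^{-1}(K)$, perhaps after a mild enlargement of $K$ to a locally connected set) and reconciling the asymptotic and critical cases are the remaining technical points.
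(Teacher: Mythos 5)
Your strategy is the right one in outline — Gross star theorem plus Proposition~\ref{prop:baker} — but you run it in the wrong direction, and the gap you honestly flag at the end is indeed fatal to the argument as written, not a mere technical point.

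The problem is exactly where you say it is. You fix $z_1\in K$, pick two germs $\phi_p,\phi_q$ of $f^{-1}$ there, and continue them along $\gamma$ out to $z_2\in\Delta=B(v,\eps)$. But the endpoints $p_2=\phi_p(z_2)$ and $q_2=\phi_q(z_2)$ are determined by the initial choice of $p_1,q_1$; you have no control over which components of $f^{-1}(\Delta)$ they land in. Generically they land in two \emph{distinct} components, each of which $f$ maps conformally onto $\Delta$, and then no arc $\beta_2$ joining $p_2$ to $q_2$ with $f(\beta_2)\subset\Delta$ exists at all. The fact that $v\in S(f)$ guarantees that \emph{some} component of $f^{-1}(\Delta)$ has degree at least $2$ or infinite degree, but it does not let you ``steer'' the analytic continuations of two pre-chosen germs into that component, and the Gross star theorem gives you freedom only in the choice of direction, not in the branch behaviour at the far end. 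So the plan as stated does not go through.

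The paper avoids this by reversing the direction of the argument so that each of the two connecting arcs $\beta_1,\beta_2$ is supplied directly by a hypothesis, rather than by hoping the continuations cooperate. Near the singular value, Lemma~\ref{lem:inversebranches} produces, from the mere fact that $s\in S(f)$, an arc $\beta_1$ in the domain joining two preimages $p_1,q_1$ of a common point $z_1$ with $C_1=f(\beta_1)$ a small Jordan curve inside $B(w,\eps)$ (the critical case is immediate from the local $z\mapsto z^d$ model; the asymptotic case uses Heins's theorem to find two preimages of a generic $w$ in a non-trivial preimage component and joins them by a polygonal arc). The germs $\phi_p,\phi_q$ are then based at \emph{this} $z_1$, near the singular value, and continued by Gross \emph{towards} a Jordan domain $G\supset K$ obtained by enlarging $K$ via the Riemann mapping theorem applied to $W\cup\{\infty\}$; Lemma~\ref{lem:increasingsets} shows $f^{-1}(G)$ is still connected, so the second connecting arc $\beta_2\subset f^{-1}(G)$ exists by hypothesis. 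The final topological contradiction is then read off cleanly from the fact that $\alpha=\partial G$ surrounds $C_2\subset G$ but not $C_1$, rather than from the hand-built escape paths you sketch near $\partial\Delta$. In short: the two arcs you need come one from $S(f)$ (via Lemma~\ref{lem:inversebranches}) and one from the connectivity of $f^{-1}(K)$ (via the enlargement to $G$), and you must orient the construction so that this is the case, not so that both land where you would like.

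As a minor additional remark, your reduction of $K$ to its fill $\hat K$ is correct, but it is simpler and more useful to pass directly to an open Jordan domain $G\supset K$ with analytic boundary as in the paper; this deals with your other flagged concern (path-connectedness of $f^{-1}(K)$) at the same time, since $f^{-1}(G)$ is an open connected set, hence path-connected.
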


To prove Proposition~\ref{prop:compactconnected}, 
   we use the following fact: Near any singular value, there
  is an inverse branch $\phi$ and a simple closed curve along which $\phi$ can be continued in such a way as to
  obtain a different inverse branch at the same point. More precisely:
\begin{lem}[Inverse branches near a singular value]\label{lem:inversebranches}
  Let $f$ be a transcendental entire function, and let $s\in\C$. Then the following are equivalent.
    \begin{enumerate}[(a)]
         \item\label{item:singular} $s\in S(f)$; 
       \item\label{item:inversebranches} for every
         neighbourhood $D$ of $s$ there is a polygonal arc
        $\beta\colon [0,1]\to\C$, not passing through any critical points of $f$, such that
            $f(\beta)\subset D$,  $f(\beta(0))=f(\beta(1))$, and such that
          $f\circ \beta$ is injective on $[0,1)$. 
   \end{enumerate}
\end{lem}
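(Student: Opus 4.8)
\emph{Plan.} The statement is an equivalence, and I would prove the two implications separately.

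For \ref{item:inversebranches}$\Rightarrow$\ref{item:singular} I would argue by contraposition. If $s\notin S(f)$, choose a bounded Jordan neighbourhood $D$ of $s$ with $\overline{D}\cap S(f)=\emptyset$; then $D$ contains no critical values and no finite asymptotic values, so $f$ restricts to a covering from $f^{-1}(D)$ onto $D$, and since $D$ is simply-connected each component of $f^{-1}(D)$ is mapped \emph{homeomorphically} onto $D$. Were there an arc $\beta$ as in~\ref{item:inversebranches} for this $D$, it would lie in a single component $\Omega$ of $f^{-1}(D)$, and applying $(f|_\Omega)^{-1}$ to the closed curve $f\circ\beta$ would recover $\beta$ itself; but the lift of a closed curve under a homeomorphism is closed, forcing $\beta(0)=\beta(1)$, contrary to $\beta$ being an arc. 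Hence~\ref{item:inversebranches} fails.

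For \ref{item:singular}$\Rightarrow$\ref{item:inversebranches}, fix $s\in S(f)$ and a neighbourhood $D$, which I may take to be a bounded Jordan domain. Since $D$ is open and $s$ lies in the closure of $\CV(f)\cup\AV(f)$ (writing $\AV(f)$ for the set of finite asymptotic values), $D$ must meet $\CV(f)$ or meet $\AV(f)$; moreover, as $D$ is open, $D\cap\CV(f)=\emptyset$ already implies $D\cap\overline{\CV(f)}=\emptyset$. In each of the two cases I would first produce an arc $\beta_0\colon[0,1]\to\C\setminus\CP(f)$ with $\beta_0(0)\neq\beta_0(1)$, $f(\beta_0(0))=f(\beta_0(1))$, and such that $\Gamma:=f\circ\beta_0$ is a Jordan curve contained in $D$. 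If $D$ contains a critical value $v$, pick a critical point $c$ over $v$ of local degree $d\geq 2$; writing $f(z)=v+g(z)^d$ in a conformal coordinate $g$ near $c$ with $g(c)=0$, the Jordan curve $\{|g|=r^{1/d}\}$ (for $r>0$ small enough that $\{|w-v|\le r\}\subset D$) is mapped by $f$ as a $d$-fold covering of the circle $\{|w-v|=r\}$, and one ``lap'' of it is the required $\beta_0$. If instead $D\cap\CV(f)=\emptyset$, then $f^{-1}(D)$ contains no critical points and $D\cap\overline{\CV(f)}=\emptyset$; choosing $a\in D\cap\AV(f)$, the singularity of $f^{-1}$ realised by an asymptotic path over $a$ is not indirect (an indirect singularity over $a$ would force $a\in\overline{\CV(f)}$; see \cite{bergweilereremenkosingularities}), and it cannot have finite degree, since a finite-degree tract over a \emph{finite} asymptotic value would make $f$ bounded on a punctured neighbourhood of $\infty$, hence constant. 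Thus $a$ is a logarithmic asymptotic value: for suitable $\rho>0$ with $\overline{B(a,\rho)}\subset D$ there is a component $L$ of $f^{-1}(B(a,\rho)\setminus\{a\})$ on which $f$ is a universal covering of $B(a,\rho)\setminus\{a\}$, and lifting a small Jordan curve $\Gamma$ around $a$ to an arc in $L$ yields $\beta_0$, whose endpoints are distinct because $\Gamma$ generates $\pi_1(B(a,\rho)\setminus\{a\})$.

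It then remains to replace $\beta_0$ by a \emph{polygonal} arc $\beta$ with the same endpoints. I would cover $\beta_0([0,1])$ by finitely many discs $\Delta_0,\dots,\Delta_{M-1}$, ordered along $\beta_0$, each disjoint from $\CP(f)$ and small enough that $f|_{\overline{\Delta_i}}$ is injective, $f(\overline{\Delta_i})\subset D$, consecutive discs overlap, and $f(\overline{\Delta_i})\cap f(\overline{\Delta_j})=\emptyset$ whenever $|i-j|\geq 2$ with the sole exception of the end pair $\{0,M-1\}$ --- which is possible because $\Gamma$ is a Jordan curve --- and then join sufficiently many successive points of $\beta_0$ by straight segments inside the corresponding discs. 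The resulting $\beta$ avoids $\CP(f)$, satisfies $f\circ\beta\subset D$ and $f(\beta(0))=f(\beta(1))$, and the overlap and disjointness conditions force $f\circ\beta$ to be injective on $[0,1)$, so $\beta$ is as required in~\ref{item:inversebranches}. The main obstacle is the asymptotic-value case: one needs that a finite asymptotic value of a transcendental entire function which is not a limit of critical values is necessarily logarithmic, which rests on the structure theory of singularities of the inverse together with the maximum-principle observation above; the final polygonal approximation is routine but has to be carried out with some care near the common endpoint of $\Gamma$.
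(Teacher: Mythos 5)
Your proof of \ref{item:inversebranches}$\Rightarrow$\ref{item:singular} and of the critical-value case of \ref{item:singular}$\Rightarrow$\ref{item:inversebranches} are both fine and in the same spirit as the paper's argument (the paper dismisses the first implication as clear). The substantive part of the lemma is the asymptotic-value case, and there your argument has a genuine gap.

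You reduce that case to showing that the chosen asymptotic value $a\in D$ is logarithmic, and to rule out an indirect singularity over $a$ you assert that ``an indirect singularity over $a$ would force $a\in\overline{\CV(f)}$,'' citing \cite{bergweilereremenkosingularities}. The relevant theorem of Bergweiler and Eremenko is proved under a finite-order hypothesis, which the present lemma does not assume; for arbitrary transcendental entire functions this implication is not available to you. What is true (and is what the paper itself uses in the proof of Lemma~\ref{lem:f}) is that an \emph{isolated} point of $S(f)$ has only logarithmic singularities of $f^{-1}$ lying over it. But you have only arranged $D\cap\overline{\CV(f)}=\emptyset$, which is strictly weaker than $D\cap S(f)=\{a\}$: nothing prevents $D$ from containing, or $a$ from being a limit of, further asymptotic values, in which case the ``$a$ is logarithmic'' conclusion does not follow from what you have established. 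A secondary, smaller, issue is the sentence ``a finite-degree tract over a finite asymptotic value would make $f$ bounded on a punctured neighbourhood of $\infty$'': a tract need not contain any punctured neighbourhood of $\infty$, so this reasoning does not stand as written (the correct argument is that properness of $f$ on the tract is incompatible with $f\to a$ along the asymptotic path).

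The paper's proof of this case avoids the classification of singularities entirely, and is both shorter and more robust. Once $D$ is known to contain no critical values, there is a component $\tilde D$ of $f^{-1}(D)$ on which $f$ is not injective (it must contain an asymptotic value, hence an unbounded component, and an unbounded component with no critical points cannot be mapped univalently onto the bounded disc $D$). By the theorem of Heins cited via \cite[Proposition~2.8]{walternurialasse}, $f^{-1}(w)\cap\tilde D$ is infinite for every $w\in D$ with at most one exception; pick such a $w$ with two distinct simple preimages $\zeta_1,\zeta_2\in\tilde D$, join them by a polygonal arc $\beta'\subset\tilde D$, and observe that $f\circ\beta'$ is a closed curve in $D$ on which $f$ is locally injective. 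A first-return argument then extracts a sub-arc $\beta\subset\beta'$ with $f\circ\beta$ injective on $[0,1)$ and $f(\beta(0))=f(\beta(1))$. No logarithmic tract, and no appeal to direct versus indirect singularities, is needed; this is what makes the paper's argument close without the finite-order hypothesis your cite would require.
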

\begin{proof}
   Clearly~\ref{item:inversebranches} implies~\ref{item:singular}. 
    So suppose that $s$ is a singular value, and that $D$ is an open disc
    centred at $s$.
  If $D$ contains a critical value $c$ of $f$, then there is a point $\tilde{c} \in \C$ and $d \geq 2$ such that $f$ maps
    like $z \to z^d$ in a neighbourhood of $\tilde{c}$. The claim follows easily.

   So we may suppose that $D$ contains no critical value. Let $\tilde{D}$ be a connected
     component of $f^{-1}(D)$ that is not mapped one-to-one by $f$. Since $D$ contains no critical values,
     it follows that 
     $\tilde{D}$ is unbounded and $f^{-1}(w)\cap \tilde{D}$ is infinite for all $w\in D$, with at most
    one exception. (This follows from a result of Heins \cite[Theorem~4']{heins};
     see \cite[Proposition~2.8]{walternurialasse}.) In particular, there is $w\in D$ having two different
     (simple) preimages $\zeta_1,\zeta_2\in \tilde{D}$. Let $\beta' \subset \tilde{D}$ be a polygonal arc from 
    $\zeta_1$ to $\zeta_2$. Then $f(\beta')$ is a closed curve, but may be self-intersecting. However, 
    since $f$ is locally injective on $\beta'$, it follows that $\beta'$ contains a sub-arc $\beta$ with the required properties.
\end{proof}

 We also need to observe that increasing a set with connected preimage to a larger domain 
   does not change this property.
 \begin{lem}[Increasing sets with connected preimage]\label{lem:increasingsets}
  Suppose that $A\subset\C$ is a connected set containing more than one point, 
     such that $f^{-1}(A)$ is connected. Then $f^{-1}(G)$ is connected for every domain $G$ containing $A$. 
 \end{lem}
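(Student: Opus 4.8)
The plan is to recast the lemma as the statement that every connected component of $f^{-1}(G)$ meets $f^{-1}(A)$. Since $f^{-1}(A)$ is connected it lies in a single component $\Omega_0$ of $f^{-1}(G)$, so once no other component can avoid $f^{-1}(A)$ we get $f^{-1}(G)=\Omega_0$; note $f^{-1}(G)\neq\emptyset$ as $f$ is non-constant and $G$ is open. So I would let $\mathcal{N}$ be the union of all components of $f^{-1}(G)$ disjoint from $f^{-1}(A)$ and show $\mathcal{N}=\emptyset$, arguing by contradiction. As $\C$ is locally connected, $\mathcal{N}$ is open and closed in $f^{-1}(G)$, and $f(\mathcal{N})$ is an open subset of $G$ disjoint from $A$; since $A\neq\emptyset$ this gives $f(\mathcal{N})\subsetneq G$, so $f(\mathcal{N})$ is not relatively closed in $G$ and we may pick $a^{*}\in(\partial f(\mathcal{N}))\cap G$. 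A short preliminary observation: $a^{*}$ is not the limit of any bounded sequence from $\mathcal{N}$, for if $a^{*}=\lim_k f(x_k)$ with $x_k\in\mathcal{N}$ bounded then a subsequential limit would lie in $\overline{\mathcal{N}}\cap f^{-1}(G)=\mathcal{N}$, forcing $a^{*}\in f(\mathcal{N})$ — impossible as $f(\mathcal{N})$ is open. Hence $x_k\to\infty$.

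The analytic heart is then the following. Fix a round disc $\Delta$ with $a^{*}\in\Delta$ and $\overline{\Delta}\subset G$, pick $y\in\mathcal{N}$ with $f(y)\in\Delta$ (possible as $a^{*}\in\overline{f(\mathcal{N})}$), and set $T^{*}:=\comp_y(f^{-1}(\Delta))$. Then $T^{*}$ is connected, lies in $f^{-1}(G)$ and meets $\mathcal{N}$, so $T^{*}\subseteq\mathcal{N}$. The map $f\colon T^{*}\to\Delta$ cannot be proper, since a proper holomorphic map onto a disc is surjective, which would put $a^{*}\in f(T^{*})\subseteq f(\mathcal{N})$. Hence, by a theorem of Heins \cite[Theorem~4']{heins} (compare the proof of Lemma~\ref{lem:inversebranches}), $f^{-1}(u)\cap T^{*}$ is infinite for every $u\in\Delta$ with at most one exceptional value, and that value must be $a^{*}$ (else $a^{*}\in f(T^{*})$ once more); therefore
\[ \Delta\setminus\{a^{*}\}\ \subseteq\ f(T^{*})\ \subseteq\ f(\mathcal{N}). \]

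The proof then closes by a soft topological bootstrap. Put $I:=(\partial f(\mathcal{N}))\cap G$. By the previous step every point of $I$ has a punctured disc contained in $f(\mathcal{N})$, so $I$ is discrete and $f(\mathcal{N})\cup I$ is open; it is also closed in $G$, since $G\setminus(f(\mathcal{N})\cup I)=G\setminus\overline{f(\mathcal{N})}$. As $G$ is connected and $f(\mathcal{N})\neq\emptyset$, this forces $f(\mathcal{N})\cup I=G$, so that $G\setminus f(\mathcal{N})=I$ is closed and discrete in $G$, hence countable. But then $A\subseteq G\setminus f(\mathcal{N})$ is countable, contradicting the elementary fact that a connected metric space with more than one point is uncountable. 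This contradiction gives $\mathcal{N}=\emptyset$, and the lemma follows.

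I expect the one delicate point to be the control of $(\partial f(\mathcal{N}))\cap G$: these are precisely the places where an asymptotic value of $f$ could, in principle, break the connectivity of $f^{-1}(G)$, and the key is that near such a point the relevant preimage component is non-proper and hence (by Heins) omits at most a single value — after which the argument is purely formal. It would also be worth confirming the Heins-type dichotomy quoted above in the exact generality used here, allowing $\Delta$ to contain critical values; this is standard but should be stated carefully.
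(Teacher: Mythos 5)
Your proof is correct, and it ultimately rests on the same engine as the paper — Heins' theorem — but you take a considerably longer route to reach the conclusion. The paper applies Heins directly to the pair $(U,G)$ for \emph{any} connected component $U$ of $f^{-1}(G)$: by Heins' theorem (cf.\ \cite[Theorem~4']{heins}, \cite[Proposition~2.8]{walternurialasse}), every point of $G$ with at most one exception has a preimage in $U$, whether or not $f|_U$ is proper and regardless of critical values in $G$. Since $A$ contains more than one point, some point of $A$ has a preimage in $U$, so $U$ meets $f^{-1}(A)$; as $f^{-1}(A)$ is connected and $U$ is a component of $f^{-1}(G)\supset f^{-1}(A)$, it follows that $f^{-1}(A)\subset U$, and hence $U$ is the only component. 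That is the whole proof. Your argument is sound, but the localization to small discs $\Delta$ around boundary points of $f(\mathcal{N})$, the discreteness of $I=(\partial f(\mathcal{N}))\cap G$, the ``open plus closed'' bootstrap, and the final countability contradiction are all dispensable once you notice that Heins already applies globally to $G$: the single exceptional value per component is all you need, because $A$ has more than one point. (Your preliminary observation — that $a^*$ is not the limit of any bounded sequence from $\mathcal{N}$ — is also never used in what follows; the non-properness of $f\colon T^*\to\Delta$ is established by a different argument.) The only technical point you flag, namely whether the Heins-type dichotomy holds when $\Delta$ contains critical values, is indeed not an issue: properness is equivalent to the preimage component containing only finitely many critical points, and when the map is not proper the ``all but one value has infinitely many preimages'' conclusion holds with no assumption about critical values in the target — and this is precisely the form in which the paper invokes it for $G$ itself.
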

\begin{proof}
  Let $U$ be a connected component of $f^{-1}(G)$. Again using \cite[Theorem~4']{heins}, 
   every point in $G$, with at most one exception, has at least one preimage in $U$. In particular,
   $U$ intersects, and hence contains $f^{-1}(A)$. Thus there can be at most one such component. 
\end{proof}

\begin{proof}[Proof of Proposition~\ref{prop:compactconnected}]
  Suppose, by way of contradiction, that $S(f)\cap W \neq\emptyset$. By the Riemann mapping theorem,
    applied to $W\cup\{\infty\}$, there is a Jordan domain $G\supset K$, bounded by an analytic curve $\alpha$,
    such that $S(f)\not\subset \overline{G}$. By Lemma~\ref{lem:increasingsets}, $f^{-1}(G)$ is connected.

  Let $B(w,\eps)\subset \C\setminus \overline{G}$ be a ball intersecting $S(f)$. 
    By Lemma~\ref{lem:inversebranches}, there is a polygonal arc $\beta_1$, joining points $p_1,q_1\in\C$
    with $z_1\defeq f(p_1)=f(q_1)$, such that $f$ is injective on $\beta_1\setminus\{q_1\}$, and such that 
   $C_1 \defeq f(\beta_1) \subset B(w, \epsilon)$ is a Jordan curve through  $z_1$. 

Using the Gross star theorem~\ref{theo:gross}, we construct an arc $\gamma$ that joins $z_1$ to a point 
  $z_2 \in G$, along which the inverse branches $\phi_p$ and $\phi_q$ of $f$ that map $z_1$ to $p_1$ and $q_1$,
  respectively, can both be
  analytically continued. 
  We can choose $\gamma$ such that 
   $\gamma\cap C_1 = \{z_1\}$ and such that
	$\gamma \cap\alpha$ consists of a single point, $\zeta$.

We now have two preimages of $z_2$, corresponding to the analytic continuation of 
   $\phi_p$ and of $\phi_q$, which we label $p_2$ and $q_2$. These are joined to $p_1$ and $q_1$,  
  respectively, by disjoint preimage components $\gamma_p$ and  $\gamma_q$ of $\gamma$.
   Also join $p_2$ and $q_2$ by an arc $\beta_2\subset f^{-1}(G)$.

 By construction, the piece of $\gamma$
    connecting $z_1$ to $\zeta$ is contained in the 
      unbounded connected component of 
      $\C\setminus C_1\cup \alpha$. As $\alpha$ surrounds $C_2$ by definition,
      no point on this arc
      is on the boundary of a bounded connected component of
      $\C\setminus (C_1\cup C_2\cup \gamma)$. This contradicts 
       Proposition~\ref{prop:baker}.
\end{proof}

We next show how to deduce Theorem~\ref{thm:itdoeshold}\ref{compactlyc} from
   Proposition~\ref{prop:compactconnected}, using the following simple fact.

\begin{proposition}[Shrinking a simply-connected domain]
\label{prop:smaller}
Suppose that $f$ is a transcendental entire function, and that $D$ is a simply-connected domain such that 
   $S(f)\cap D$ is compact, and let $\tilde{D}$ be a connected component of $f^{-1}(D)$. 

   If $U\subset D$ is simply-connected with 
   $S(f)\cap  U = S(f)\cap D$, then $f^{-1}(U)\cap \tilde{D}$ is connected.
\end{proposition}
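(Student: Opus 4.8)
The plan is to deduce this from Proposition~\ref{prop:compactconnected} by finding a compact connected set $K$, with $f^{-1}(K)$ connected, whose unbounded complementary component $W$ meets $S(f)$ unless $f^{-1}(U) \cap \tilde D$ is connected. First I would observe that it suffices to prove the statement when $f^{-1}(D)$ itself is connected, i.e.\ when $\tilde D = f^{-1}(D)$; the general case follows by considering the branched covering $f \colon \tilde D \to D$ and working in local coordinates, since $\tilde D$ is a non-compact simply-connected surface and $f \colon \tilde D \to D$ is a branched covering with the singular values of this restriction contained in $S(f)\cap D$ (the point being that removing the compact set $S(f)\cap D$ does not change the relevant topology). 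So assume $f^{-1}(D)$ is connected.

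Next I would set up the contradiction hypothesis: suppose $f^{-1}(U)$ is disconnected, with two distinct connected components $\tilde U_1, \tilde U_2$. The idea is to build a compact connected set $K \subset U$ with $S(f)\cap D \subset K$, such that $f^{-1}(K)$ is connected, but such that $K$ still leaves $S(f)\cap D$ accessible from the unbounded complementary component --- thereby contradicting Proposition~\ref{prop:compactconnected}. Concretely: since $S(f)\cap D$ is compact and contained in $U$, and $U$ is simply-connected, I can choose a compact connected set $K_0 \subset U$ (say a finite union of arcs, or the filled hull of a suitable continuum) that contains $S(f)\cap D$ and that is \emph{full} relative to $U$ in the appropriate sense; then $f^{-1}(K_0)$ need not be connected, so I enlarge $K_0$ within $U$ by attaching arcs joining the various preimage components of $K_0$. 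The key structural input is that $f \colon f^{-1}(U) \to U$ is a branched covering (Proposition~\ref{prop:branchedcover}, using $S(f)\cap U$ compact hence discrete away from a compact set --- actually $S(f)\cap U$ compact means we may need $U$ contains no asymptotic values outside a compact part; more carefully one uses that any component of $f^{-1}(U)$ maps properly off a compact set), and that $f^{-1}(U)$ being disconnected forces two components, so one can choose an arc in $U$ whose lift realizes the connection. Since $f^{-1}(A)$ connected is preserved under enlarging $A$ to a domain (Lemma~\ref{lem:increasingsets}), and since $f^{-1}(D)$ is connected, I can actually sidestep much of this: take $A$ to be a compact connected subset of $f^{-1}(D)$ large enough that $f(A)$ is connected, contains $S(f)\cap D$, and is contained in $U$ after a slight adjustment --- here is where $S(f)\cap U = S(f)\cap D$ is used.

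The cleanest route is probably: by Lemma~\ref{lem:increasingsets} applied in $\tilde D$, $f^{-1}(\tilde D) = \tilde D$ (so to speak), connectedness of preimages is inherited from a connected compact subset; so I would choose a connected compact $K \subset \tilde D \cap f^{-1}(U)$ with $f^{-1}(f(K)) \cap \tilde D$ connected --- possible because $\tilde D$ is connected and one can connect up finitely many "relevant" preimage pieces by arcs, these arcs projecting into $U$ since $S(f)\cap U = S(f)\cap D$ lets us route them through $U$ --- set $\hat K \defeq f(K) \subset U$, apply Proposition~\ref{prop:compactconnected} to $\hat K$ inside the restricted map to conclude $S(f) \cap W = \emptyset$ where $W$ is the unbounded component of the complement of $\hat K$, and derive a contradiction from the fact that, by construction, $S(f)\cap D \subset \hat K$ yet points of $\partial U \setminus \hat K$ (or suitable interior structure) remain in $W$. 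The main obstacle I anticipate is the bookkeeping of \emph{which} preimage components of a chosen compact set need to be joined, and arranging the connecting arcs to project into $U$ rather than into $D \setminus U$ --- this is precisely where the hypothesis $S(f)\cap U = S(f)\cap D$ must be invoked, and getting that step airtight (as opposed to merely plausible from a picture) is the delicate part; everything else is routine topology of branched coverings once the reduction to the connected case is made.
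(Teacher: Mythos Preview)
Your approach has a genuine gap, and it is quite different from the paper's. The paper does not use Proposition~\ref{prop:compactconnected} here at all; it simply cites \cite[Proof of Proposition~2.9]{walternurialasse}, where the argument is a direct covering-space computation. One chooses a Jordan domain $D'$ with $S(f)\cap D\subset D'\subset\overline{D'}\subset U$; then $f$ restricts to a covering map over $D\setminus\overline{D'}$, and since the inclusion $U\setminus\overline{D'}\hookrightarrow D\setminus\overline{D'}$ is a $\pi_1$-isomorphism (both are topological annuli sharing the core $\partial D'$), the components of $f^{-1}(U\setminus\overline{D'})\cap\tilde D$ are in bijection with those of $\tilde D\setminus f^{-1}(\overline{D'})$, with the same adjacencies to components of $f^{-1}(\overline{D'})\cap\tilde D$. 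Connectedness of $f^{-1}(U)\cap\tilde D$ then follows from connectedness of $\tilde D$.

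Your proposed route through Proposition~\ref{prop:compactconnected} fails at two points. First, the step where you ``enlarge $K_0$ within $U$ by attaching arcs joining the various preimage components'' is circular: joining preimage components in $\tilde D$ by arcs \emph{whose images lie in $U$} is exactly the statement you are trying to prove, and you do not supply an independent argument for it. Second, even if you had a compact connected $K\subset U$ with $S(f)\cap D\subset K$ and $f^{-1}(K)$ connected, Proposition~\ref{prop:compactconnected} only gives $S(f)\cap W=\emptyset$ for the unbounded component $W$ of $\C\setminus K$. That says $S(f)$ lies in the filled hull of $K$, which you already arranged; it does not contradict $f^{-1}(U)\cap\tilde D$ being disconnected. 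Your suggested contradiction (``points of $\partial U\setminus\hat K$ remain in $W$'') is not one, since those points need not belong to $S(f)$. Minor further issues: your reduction to $\tilde D=f^{-1}(D)$ is problematic because Proposition~\ref{prop:compactconnected} is stated only for entire functions, not for the restriction $f\colon\tilde D\to D$; and Lemma~\ref{lem:increasingsets} goes from smaller to larger, so it cannot help you pass from $D$ down to $U$.
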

\begin{proof} This is shown in the second paragraph of \cite[Proof of Proposition~2.9]{walternurialasse}.
\end{proof}

We thus have the following strengthening of Theorem~\ref{thm:itdoeshold}\ref{compactlyc}.

\begin{corollary}[Domains with compact intersection with $S(f)$]
\label{cor:compactlyc}
Suppose that $f$ is a transcendental entire function, and that $G$ is a simply-connected domain such that
   $G \cap S(f)$ is compact. Then $f^{-1}(G)$ is connected if and only if $S(f) \subset G$.
\end{corollary}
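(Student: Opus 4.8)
The plan is to prove both implications by reducing to a suitable bounded Jordan domain inside $G$, using the Riemann mapping theorem; the case $G=\C$ is trivial, since then $f^{-1}(G)=\C$ is connected and $S(f)\subset\C=G$. Throughout I use the elementary fact that, $f$ being an open map, $\overline{f^{-1}(A)}=f^{-1}(\overline A)$ for every open set $A\subset\C$; in particular $f^{-1}(\overline U)=\overline{f^{-1}(U)}$, so passing from a domain to its closure preserves connectedness of the preimage.

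\emph{The implication ``$f^{-1}(G)$ connected $\Rightarrow S(f)\subset G$''.} Set $K\defeq G\cap S(f)$, which is compact by hypothesis, and use the Riemann map of $G$ to choose a bounded Jordan domain $U$ with $K\subset U\subset\overline U\subset G$. Then $S(f)\cap U=K=S(f)\cap G$, so Proposition~\ref{prop:smaller}, applied with the simply-connected domain $G$ in place of $D$ and with $\tilde D=f^{-1}(G)$ (which is connected, hence the unique component of $f^{-1}(G)$), shows that $f^{-1}(U)=f^{-1}(U)\cap f^{-1}(G)$ is connected. Consequently $f^{-1}(\overline U)=\overline{f^{-1}(U)}$ is connected, i.e.\ the compact continuum $\overline U$ has connected preimage. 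Since $U$ is a Jordan domain, $\C\setminus\overline U$ is exactly the exterior of the Jordan curve $\partial U$ and hence connected; so Proposition~\ref{prop:compactconnected} applied to $\overline U$ gives $S(f)\cap(\C\setminus\overline U)=\emptyset$, that is, $S(f)\subset\overline U\subset G$.

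\emph{The implication ``$S(f)\subset G\Rightarrow f^{-1}(G)$ connected''.} Now $S(f)=G\cap S(f)$ is compact. Pick a bounded Jordan domain $D$ with $S(f)\subset D\subset\overline D\subset G$ (Riemann map again) and $R>0$ with $\overline D\subset B(0,R)$; set $D_0\defeq D$ and $D_k\defeq B(0,R+k)$ for $k\ge 1$. Then $(D_k)$ is an increasing sequence of simply-connected domains, each containing $S(f)$, with $\bigcup_k D_k=\C$. For each $k$, Proposition~\ref{prop:smaller} applied with $D_{k+1}$ in place of $D$ and $D_k$ in place of $U$ (note $S(f)\cap D_k=S(f)=S(f)\cap D_{k+1}$) shows that every component of $f^{-1}(D_{k+1})$ contains at most one component of $f^{-1}(D_k)$; equivalently, the inclusion $f^{-1}(D_k)\subset f^{-1}(D_{k+1})$ induces an \emph{injection} on sets of connected components. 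A short topological observation then finishes the argument: if $A_0\subset A_1\subset\cdots$ are open subsets of $\C$ whose union is connected and such that each inclusion $A_k\subset A_{k+1}$ induces an injection on components, then every $A_k$ is connected. (Two points lying in distinct components of some $A_k$ would be joined by a path in $\bigcup_j A_j$, hence in some $A_L$, forcing those two components to be sent to the same component of $A_L$ and contradicting injectivity of the composed map.) Taking $A_k=f^{-1}(D_k)$ and using $\bigcup_k f^{-1}(D_k)=f^{-1}(\C)=\C$, we conclude that $f^{-1}(D)$ is connected; since $D\subset G$ is a domain containing more than one point, Lemma~\ref{lem:increasingsets} yields that $f^{-1}(G)$ is connected.

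The step I expect to require the most care is the forward implication: one must pass from $U$ to $\overline U$ — checking both that the preimage of $\overline U$ stays connected and that $\partial U$ is a genuine Jordan curve, which is precisely where the Riemann-map construction is essential — before Proposition~\ref{prop:compactconnected} can be invoked with $W=\C\setminus\overline U$. The converse is essentially formal once Proposition~\ref{prop:smaller} is fed into an exhaustion of $\C$, the only non-bookkeeping ingredient being the elementary ``no-merging'' lemma for increasing unions stated above.
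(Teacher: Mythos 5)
Your proof is correct. The ``only if'' direction is essentially the paper's: both pick a bounded Jordan domain $U$ with $G\cap S(f)\subset U\subset\overline U\subset G$, apply Proposition~\ref{prop:smaller} (with $D=G$, $\tilde D=f^{-1}(G)$) to get $f^{-1}(U)$ connected, pass to the compact continuum $\overline U$, whose complement is connected, and invoke Proposition~\ref{prop:compactconnected}.

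The ``if'' direction is where you take a genuinely longer route. You build an exhaustion $D_0\subset D_1\subset\cdots$ of $\C$ by simply-connected domains each containing $S(f)$, use Proposition~\ref{prop:smaller} at each step to show the inclusion $f^{-1}(D_k)\subset f^{-1}(D_{k+1})$ is injective on connected components, establish a ``no-merging'' lemma for such increasing unions, and then bootstrap to $f^{-1}(G)$ via Lemma~\ref{lem:increasingsets}. All of this is sound, but none of it is needed: Proposition~\ref{prop:smaller} already applies in a single stroke with $D=\C$, $\tilde D=f^{-1}(\C)=\C$ and $U=G$, since $S(f)\cap\C=S(f)=S(f)\cap G$ is compact precisely because $S(f)\subset G$, and the conclusion is immediately that $f^{-1}(G)\cap\C=f^{-1}(G)$ is connected. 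That one-liner is the paper's proof. Your exhaustion and no-merging lemma amount to re-deriving this special case by hand on bounded exhausting domains, apparently to avoid feeding the unbounded domain $\C$ into Proposition~\ref{prop:smaller}; but there is nothing in that proposition that forbids $D=\C$, so the extra scaffolding buys you nothing.
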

\begin{proof}[Proof of Corollary~\ref{cor:compactlyc}]
  The ``if'' direction is immediate from Proposition~\ref{prop:smaller}, taking $D=\C$ 
    and $U=G$. (See also \cite[Proposition~2.9~(2)]{walternurialasse}.) 

   For the ``only if'' direction, suppose that  $f^{-1}(G)$ is connected. Let $D\subset G$ be a bounded Jordan domain with
    $S(f)\cap G \subset D$ and such that $K\defeq \overline{D}\subset G$. Then $f^{-1}(D)$ is connected by 
     Proposition~\ref{prop:smaller}, as is $f^{-1}(K) = \overline{f^{-1}(D)}$. It follows from Proposition~\ref{prop:compactconnected} that
    $S(f)\subset K\subset G$, as required.
\end{proof}
 
We next record a strengthening of Theorem~\ref{thm:itdoeshold}~\ref{infaccessible} 
   that concerns the accessibility of \emph{direct asymptotic values} on
   $\partial G_1$ from $G_1$. 
   Here $a\in\Ch$ is a direct asymptotic value of $f$ if there is an open connected neighbourhood $\Delta$ of $a$ in $\Ch$ and a connected component $\tilde{\Delta}$ of
  $f^{-1}(\Delta)$ such that $a\notin f(\tilde{\Delta})$. Observe that every Picard exceptional value (i.e., a value $a$ for which $f^{-1}(a)$ is finite) is direct; 
   in particular, $\infty$ is always a direct asymptotic value of~$f$.
\begin{proposition}[Accessible asymptotic values on boundaries]
\label{prop:Picard}
Let $f$ be a transcendental entire function and let $\xi\in\Ch$ be a direct asymptotic value of $f$. Suppose that 
$G_1, G_2$ are disjoint simply-connected domains such that $f^{-1}(G_1)$ is connected. If $\xi$ is an accessible boundary point of $G_1$, then
$f^{-1}(G_2)$ is disconnected.
\end{proposition}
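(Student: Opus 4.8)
The plan is to argue by contradiction. Suppose that $f^{-1}(G_2)$ is also connected; we shall contradict Baker's argument, Proposition~\ref{prop:baker}, following the scheme of the proof of Proposition~\ref{prop:compactconnected}, but with the ``small loop near a singular value'' there replaced by data coming from the direct singularity over $\xi$. First I would analyse this singularity. Let $\Delta\subset\Ch$ be a neighbourhood of $\xi$ --- a round disc, or a round disc around $\infty$ --- and let $\tilde\Delta$ be a component of $f^{-1}(\Delta)$ with $\xi\notin f(\tilde\Delta)$. If $f$ were injective on $\tilde\Delta$, then $f\colon\tilde\Delta\to\Delta$ would be proper by the first proposition of Section~\ref{sect:preliminary}, hence surjective, contradicting $\xi\notin f(\tilde\Delta)$. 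So $f$ is not injective on $\tilde\Delta$; by that proposition together with the result of Heins quoted in the proof of Lemma~\ref{lem:inversebranches}, $\tilde\Delta$ contains infinitely many critical points and $f^{-1}(w)\cap\tilde\Delta$ is infinite for all $w\in\Delta$ with at most one exception. Since $f^{-1}(\xi)\cap\tilde\Delta=\emptyset$, that exception must be $\xi$, and therefore $f(\tilde\Delta)=\Delta\setminus\{\xi\}$.

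Next I would produce two asymptotic curves of $f$ over $\xi$ inside $f^{-1}(G_1)$. Using the accessibility of $\xi$, fix a simple curve $\sigma\colon[0,1)\to G_1$ with $\sigma(t)\to\xi$, chosen to avoid $\CV(f)$ and with $\sigma([t_0,1))\subset\Delta$ for some $t_0<1$. Since $\sigma(t_0)\in G_1\cap(\Delta\setminus\{\xi\})=f(\tilde\Delta)$ has infinitely many preimages in $\tilde\Delta$, we may continue two distinct branches $\phi_p,\phi_q$ of $f^{-1}$ from $\sigma(t_0)$ along $\sigma|_{[t_0,1)}$; both lifts remain in $\tilde\Delta$. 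Neither lift can have a finite subsequential limit as $t\to1$: such a limit would be a point of $\overline{\tilde\Delta}$ mapped by $f$ to $\xi$, which is impossible since $f^{-1}(\xi)\cap\tilde\Delta=\emptyset$ and $f(\partial\tilde\Delta)\subset\partial\Delta$ while $\xi\notin\partial\Delta$. Hence both lifts tend to $\infty$, giving disjoint asymptotic curves $\tilde\sigma_p,\tilde\sigma_q\subset f^{-1}(G_1)\cap\tilde\Delta$ over $\xi$, both projecting onto $\sigma|_{[t_0,1)}$.

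Now I would set up Baker's configuration. Fix $t_1\in(t_0,1)$ with $z_2:=\sigma(t_1)$ close to $\xi$, and put $p_2:=\tilde\sigma_p(t_1)$, $q_2:=\tilde\sigma_q(t_1)$. As $f^{-1}(G_1)$ is connected, join $p_2$ and $q_2$ by an arc $\beta_2\subset f^{-1}(G_1)$, so that $C_2:=f(\beta_2)\subset G_1$; it is convenient to take $\beta_2$ to run backwards along $\tilde\sigma_p$ to $\tilde\sigma_p(t_0)$, across $f^{-1}(G_1)$ to $\tilde\sigma_q(t_0)$, and forwards along $\tilde\sigma_q$, so that $C_2$ coincides with $\sigma([t_0,t_1])$ near $z_2$. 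Choose $z_1\in G_2$ and a simple arc $\gamma$ from $z_2$ to $z_1$ avoiding $\CV(f)$, continue $\phi_p,\phi_q$ along $\gamma$ to distinct preimages $p_1,q_1$ of $z_1$, and join these by an arc $\beta_1\subset f^{-1}(G_2)$ (using that $f^{-1}(G_2)$ is connected), so that $C_1:=f(\beta_1)\subset G_2$. Then $C_1\cap C_2=\emptyset$ because $G_1\cap G_2=\emptyset$; the lifts $\gamma_p:=\phi_p(\gamma)$ and $\gamma_q:=\phi_q(\gamma)$ are disjoint, $f$ is injective on each, and $f(\gamma_p)=f(\gamma_q)=\gamma$. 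So Proposition~\ref{prop:baker} applies, and asserts that every $\zeta\in\gamma\setminus(C_1\cup C_2)$ lies on the boundary of a \emph{bounded} component of $\C\setminus(C_1\cup C_2\cup\gamma)$.

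To reach a contradiction I would choose $\zeta$ on $\gamma$ very close to $z_2$ but not on $C_2$, and join $\zeta$ to $\infty$ while avoiding $C_1\cup C_2\cup\gamma$. When $\xi=\infty$ this is transparent: one steps off $\gamma$ near $z_2$ onto a parallel copy of the tail $\sigma|_{[t_1,1)}$ of the accessing curve and follows it to infinity inside $G_1$; since $\sigma$ is simple, this copy meets neither $C_2$ (which near $z_2$ runs backwards along $\sigma([t_0,t_1])$ and is otherwise contained in a compact subset of $G_1$), nor $C_1\subset G_2$, nor $\gamma$ (which leaves the neighbourhood of $z_2$ towards $z_1\in G_2$). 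When $\xi$ is finite one argues similarly, following $\sigma$ towards $\xi$ and then, near $\xi$, leaving $\overline{G_1}$ into a neighbourhood of $\xi$ and escaping from there; here one uses that $\overline{G_2}$ is disjoint from a small neighbourhood of $\xi$ when $\xi\notin\overline{G_2}$, while the remaining case (where $\xi\in\overline{G_2}$ but $\xi$ is not accessible from $G_2$) needs a separate but elementary argument. I expect this last step --- verifying that $\zeta$ can genuinely be joined to infinity off $C_1\cup C_2\cup\gamma$ --- to be the main obstacle: it is the point at which the accessibility of $\xi$ from $G_1$ is really used, and the rest is a routine adaptation of the inverse-function and Baker-argument techniques already developed in the paper.
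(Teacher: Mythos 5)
Your route is genuinely different from the paper's: the published proof is direct (no contradiction, no appeal to Proposition~\ref{prop:baker}) --- it lifts an accessing arc $\gamma\subset G_1$ to two disjoint asymptotic curves $\gamma_p,\gamma_q\subset\tilde\Delta\subset f^{-1}(G_1)$, joins them by an arc $\tau\subset f^{-1}(G_1)$ to form a separating set $\gamma_p\cup\tau\cup\gamma_q$, and then uses the Gross star theorem once more to produce a point of $f^{-1}(G_2)$ in each complementary component. Your plan --- set up Baker's quadrilateral and contradict Proposition~\ref{prop:baker} --- is modelled on the proof of Proposition~\ref{prop:compactconnected} and is a sensible first attempt, but the final step does not close.

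There are two real gaps. First, continuing $\phi_p,\phi_q$ along $\sigma|_{[t_0,1)}$ on the strength of $\sigma\cap\CV(f)=\emptyset$ alone is not justified: an inverse branch can fail to continue along a critical-value-free path by escaping to infinity (when the path approaches another asymptotic value whose singularity lies over the same tract). The paper's own construction of $\gamma$ via Theorem~\ref{theo:gross}, ``possibly with infinitely many pieces'', is precisely the device needed here, and you also implicitly need Gross when you later continue the branches along $\gamma$ to a point of $G_2$. (A smaller slip: the first proposition of Section~\ref{sect:preliminary} is about branched coverings, and an entire $f$ with a finite asymptotic value is not a branched covering of $\C$, so it does not apply to $\tilde\Delta$ directly; the Heins result cited in the proof of Lemma~\ref{lem:inversebranches} gives what you want.) Second, and more substantially, the contradiction to Proposition~\ref{prop:baker} is not established. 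You need a $\zeta\in\gamma\setminus(C_1\cup C_2)$ that is not on the boundary of \emph{any} bounded complementary component of $C_1\cup C_2\cup\gamma$; joining a nearby point to $\infty$ handles only one side of $\gamma$ at $\zeta$, and the case $\xi\in\overline{G_2}$ is explicitly left open. Compare the proof of Proposition~\ref{prop:compactconnected}: there the analytic Jordan curve $\alpha=\partial G$ separates $C_1$ from $C_2$ and is crossed exactly once by $\gamma$, which is what controls the topology of $C_1\cup C_2\cup\gamma$ near the chosen point. You have no analogue of $\alpha$, and that is precisely why the published proof abandons the Baker-quadrilateral scheme here and argues directly.
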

\begin{proof}
%
Let $\Delta$ and $\tilde{\Delta}$ be as in the definition of a direct asymptotic value. 
If $\Delta$ is chosen sufficiently small, then 
   every value of $\Delta\setminus\{a\}$ has infinitely many preimages in $\Delta$
     (this follows once more from \cite[Theorem~4']{heins}). 
 By assumption, $G_1\cap \Delta$ has a connected component $\Delta_1$ from which $a$ is accessible. 
 Choose a point $z_1\in \Delta_1$ which is not a critical value. Then $z_1$ has  infinitely many simple preimages in $\tilde{\Delta}$, 
  each corresponding to a different branch of $f^{-1}$. Using Theorem~\ref{theo:gross}, we construct a polygonal arc $\gamma\subset \Delta_1$ from $z_1$ to $\xi$, possibly with infinitely many pieces, such that these inverse branches can all be continued along $\gamma\setminus\{\xi\}$.

Let $p \in \tilde{\Delta}\cap f^{-1}(z_1)$, and let $\gamma_p\subset \tilde{\Delta}$ be the component 
  of $f^{-1}(\gamma)$ containing $p$. Since $\xi$ has no preimages in $U$, the arc $\gamma_p$ 
  connects $p$ to $\infty$. Pick a second preimage $q\neq p$ of $z_1$ in $\tilde{\Delta}$, 
  and define $\gamma_q$ analogously. We also join $p$ and $q$ by an arc $\tau$ in $f^{-1}(G_1)$ that does not intersect $\gamma_p \cup \gamma_q$. 
    Note that $\gamma_p \cup \tau \cup \gamma_q\subset f^{-1}(G_1)$ separates the 
    plane into two complementary
    components; we claim that each such component $D$ 
    intersects $f^{-1}(G_2)$. This 
    implies that $f^{-1}(G_2)$ is disconnected, as required.

Let $B$ denote the union of $f(\tau)$ and its bounded complementary components. Then $B$ is a compact subset of
  $G_1$. Let $w \in \gamma$ be a point so close to $\xi$ that 
  $B$ does not intersect the arc of $\gamma$ connecting $w$ to $\xi$. 
   Then $w$ has a preimage on $\gamma_p$; say $\tilde{w}$. Let $W\subset G_1$ be a small neighbourhood of $w$ disjoint from $B$, and let $\tilde{W}$ be the component of the preimage of $W$ containing $\tilde{w}$. We may choose $\zeta \in D \cap \tilde{W}$ such that $\zeta$ is not a critical point and $f(\zeta) \notin \gamma$.
   By choice of $w$, and since $G_1$ is simply-connected, the set 
   $B\cup\gamma$ does 
   not separate $f(\zeta)$ from $G_2$.
      
Using the Gross star theorem~\ref{theo:gross}, we find a polygonal curve 
  $\Gamma\subset \C\setminus (B\cup\gamma)$
     from $f(\zeta)$ to a point of $G_2$ in such a way that the inverse branch of $f$ that sends $f(\zeta)$ to $\zeta$ can be continued along $\Gamma$. 
     Then the preimage of $\Gamma$ is a curve in $D$ that ends at a point of 
      $f^{-1}(G_2)$. This completes the proof.
\end{proof}

\begin{proof}[Proof of Theorem~\ref{thm:itdoeshold}]
Let $f$ be a transcendental entire function, and let $G_1, G_2$ be disjoint simply-connected domains such that $f^{-1}(G_1)$ is connected. 
  In turn, we shall conclude from each assumption in Theorem~\ref{thm:itdoeshold} that $f^{-1}(G_2)$ is disconnected.

We begin with~\ref{boundeddomain}, so suppose that $G_1$ 
   is bounded and $\overline{G_1}$ does not separate $G_2$ from infinity. Set $K\defeq \overline{G_1}$
  and let $W$ be the unbounded connected component of $\C\setminus K$.  Then, by assumption, $f^{-1}(K)=\overline{f^{-1}(G_1)}$ is connected
  and $G_2\subset W$.    
   By Proposition~\ref{prop:compactconnected}, $G_2 \cap S(f)=\emptyset$. Hence $f$ is univalent on every component of $f^{-1}(G_2)$, and in particular
  $f^{-1}(G_2)$ is disconnected.

Next, we turn to~\ref{compactlyc}.
If $G_1\cap S(f)$ 
 is compactly contained in $G_1$ and $f^{-1}(G_1)$ is connected, then by Corollary~\ref{cor:compactlyc},
    $S(f)\subset G_1$. It follows as above that $f^{-1}(G_2)$ is disconnected.
    Clearly~\ref{inS} is a direct consequence of~\ref{compactlyc}.

Now suppose that \ref{onlyonesingularity} holds. We use a technique similar to the proof of part (iii) of the theorem of \cite{walteralex2}. By assumption,
   $f^{-1}(G_1)$ contains a Jordan curve $\Gamma$, unbounded in both directions, such that $f$ tends to an asymptotic value $a_1\in \Ch$ in one direction,
   and to a (possibly different) asymptotic value $a_2\in\Ch$ in the other. Since both ends of $\Gamma$ represent different 
   transcendental singularities, there is $\eps>0$ such that 
  $\Gamma$ has unbounded intersection with two different components of $f^{-1}(\{ z \in \C \colon \dist^{\#}(z,\{a_1,a_2\}) < \epsilon \})$.
   (Here $\dist^{\#}$ denotes spherical distance.)

The Jordan curve $\Gamma$ divides the plane into two components. As 
 in the proof of Proposition~\ref{prop:Picard}, we claim that each such component $D$ must meet $f^{-1}(G_2)$, which is therefore disconnected.
Otherwise, choose a point $w \in G_2$, and consider the function \[g(z) = \frac{1}{f(z) - w}.\] Then $g$ is bounded and holomorphic in $D$ and has finite limits as $z$ tends to infinity along~$\Gamma$. By a theorem of Lindel\"of \cite[p.~9, \P~5]{lindelof15}, we have $a_1=a_2$ and $f(z)\to a_1$ as $z\to\infty$ in $D$. 
  (See also \P\P~36, 39 and 61 in \cite[Chapter~III]{nevanlinnagerman}.) This is a contradiction, as 
 $D \cap f^{-1} (\{ z \in \C \colon \dist^{\#}(z,\{a_1,a_2\}) = \epsilon \})$ has an unbounded connected component.

We now turn to~\ref{sharedboundary}. By way of contradiction, suppose that $f^{-1}(G_2)$ is connected and $\zeta$ is accessible from both $G_1$ and $G_2$. We can assume, by Proposition~\ref{prop:Picard}, that $\zeta$ is not a Picard exceptional point. 
Let $\zeta_p$ and $\zeta_q$ be two distinct 
  preimages of $\zeta$, and let $\Delta$ be a disc around $\zeta$. If $\Delta$ is sufficiently small, then $\partial \Delta$ meets both $G_1$ and $G_2$, and for each $j \in \{p, q\}$ the component $\Delta_j$ of $f^{-1}(\Delta)$ 
  that contains $\zeta_j$ has all the following properties;
\begin{itemize}
\item $\Delta_j$ is bounded and does not meet the other preimage component;
\item $\overline{\Delta_j}$ is mapped to $\overline{\Delta}$ as a degree $d$ branched covering, 
    where $d$ is the local degree of $f$ at $\zeta_j$; 
\item in particular, $\Delta_j$ 
    contains no critical points of $f$ except possibly $\zeta_j$, and also $\Delta_j \cap f^{-1}(\zeta) = \{\zeta_j\}$.
\end{itemize}

For $j\in \{1, 2\}$, let $\gamma_j$ be an arc connecting $\zeta$ to a point of $G_j\cap \partial \Delta$ with the property that 
    $\interior(\gamma_j)\subset \Delta\cap G_j$. 
 Set $\gamma = \gamma_1\cup \gamma_2$. Let $z_1$ and $z_2$ be the two endpoints of $\gamma$, with $z_j\in G_j\cap \partial \Delta$. 

For each $j \in \{p, q\}$, let $\tilde{\gamma}_j$ be the component of $f^{-1}(\gamma)$ containing $\zeta_j$. Then  
  $\tilde{\gamma}_j$ is either an arc (corresponding to the case 
  where $\zeta_j$ is not a critical point), or a tree with just one vertex of order greater than one (at $\zeta_j$). 
  In either case, there is an arc $\gamma_j\subset \tilde{\gamma}_j$ 
   such that $f\colon \gamma_j\to\gamma$ is a homeomorphism. We let $p_k$ ($k\in\{1,2\}$) be the point on 
   $\gamma_p$ whose image is $z_k$. We similarly let $q_k$ be the point on $\gamma_q$ whose image 
   is $z_k$.

For $k \in \{1, 2\}$, we can join $p_k$ and  $q_k$ by an arc 
  $\beta_k \subset f^{-1}(G_k)$. Set $C_k\defeq f(\beta_k)$
   and consider the set $A_k\defeq C_k \cup \gamma_k \subset G_k\cup \{\zeta\}$. 
       Since $G_k$ is simply-connected, every bounded connected component
       of $\C\setminus A_k$ is contained in $G_k$, and does not 
       contain $\zeta$ on its boundary. Furthermore, $A_1\cap A_2=\{\zeta\}$ is connected, 
       so by Janiszewski's theorem there are no other bounded connected 
       complementary components of
       $A_1\cup A_2 = C_1\cup C_2\cup \gamma$. This is impossible  by 
        Proposition~\ref{prop:baker}. We have obtained the desired contradiction.

Observe that \ref{infaccessible} is an immediate consequence of Proposition~\ref{prop:Picard}.

Now assume that \ref{disjointclosure} holds; i.e.\ that $\overline{G_1} \cap \overline{G_2} = \emptyset$, and suppose by way of contradiction that $f^{-1}(G_2)$ is connected. 
It follows by \cite[Theorem~VI.3.1]{whyburn} that there is a Jordan curve $\Gamma$, 
   which may pass through $\infty$, 
 that separates the plane into two components, one containing $\overline{G_1}$ and the other containing $\overline{G_2}$. 

For $j\in\{1,2\}$, let $V_j$ be the connected component of $\C\setminus\Gamma$ containing $G_j$. 
   Then $f^{-1}(V_j)$ is connected by Lemma~\ref{lem:increasingsets}. 
   If $\Gamma$ is bounded, then one of the $V_j$, say $V_1$,  is bounded. Clearly 
    $\overline{V_1} = V_1\cup\Gamma$ does not separate $V_2$ from
    infinity, 
    contradicting~\ref{boundeddomain}.

  On  the other hand, if $\Gamma$ is unbounded, then both 
    $V_j$ are simply-connected
  and every point of $\partial V_1 = \partial V_2 = \Gamma$ is accessible from both
   $V_1$ and $V_2$. This contradicts~\ref{sharedboundary}.

Finally, as noted earlier, \ref{walteralexresult} is  an immediate consequence of \cite[Theorem~1]{walteralex}.
\end{proof}

%
%
%
%

\section{Appendix}
\label{sect:appendix}
In this final part of the paper we list, for the convenience of the research community, various papers whose results are impacted by the problem found. Note that this list is not necessarily complete; some of the papers in this list are very heavily cited, as they include other influential results that are not in doubt, and so it is difficult to be sure that every issue has been identified.

Firstly, we have identified six papers that have a common flaw in a proof, and so certain results in these papers must be considered \emph{open}. These are as follows:
\begin{enumerate}[(i)]
\item The result of \cite{baker1970} (this paper contains only one result).
\item The proof of \cite[Theorem 2]{bakernormality}. Note that the other results of \cite{bakernormality} are not affected by the problem.
\item The proof of \cite[Lemma 11]{eremenkoandlyubich}. No other results of \cite{eremenkoandlyubich} are affected by the problem. 
\item The proof of \cite[Theorem K]{Dominguez}. No other results of \cite{Dominguez} are affected by the problem.
\item The proof of \cite[Lemma 2.2]{ling}, and hence \cite[Theorem 2.1]{ling}, which depends on this lemma.
\item The proofs of \cite[Theorem 3 and Theorem 4]{MR2011920}.
\end{enumerate}

Most papers that use these theorems are dynamical, and so our constructions give no additional information on whether their results are correct. We have identified just one explicitly stated
  result that is now seen to be false. This is \cite[Theorem 2]{walteralex}, which states that if $f$ is an entire function of finite order, and $a \in\C$ is either a critical value or a locally omitted value, then any simply-connected region that does not contain $a$ has disconnected preimage. Theorem~\ref{thm:easyexample} can be seen to be a counter-example to this assertion. No other papers use \cite[Theorem 2]{walteralex}.

We mention also that the preprint \cite{domingueznew} attempts to give
  a ``complementary proof'' of Baker's original result regarding completely invariant Fatou
  components. Unfortunately, the proof does not use the dynamics of 
  the function in an essential way. Therefore it also runs afoul of our
  construction, and would effectively contradict Theorem~\ref{thm:easyexample}. 

We end with three lists of results that use the potentially flawed theorems. The first list contains those results 
that use the potentially flawed theorems, but can been seen nonetheless to hold.
\begin{enumerate}[(i)]
\item The proof of \cite[Theorem H]{MR1785150} uses \cite[Lemma 11]{eremenkoandlyubich}. This result applies only to the class $\mathcal{S}$, and so in fact the result can be recovered by using Theorem~\ref{thm:itdoeshold}\ref{inS} instead.
\item The proof of \cite[Theorem B]{MR1836425} uses \cite{baker1970}. This result can also be recovered by using Theorem~\ref{thm:itdoeshold}\ref{inS} instead.
\item The proof of \cite[Theorem 1.2]{MR2801622} uses the result of \cite{baker1970}. However, this dependence can easily be shown not to be essential (since a slight adaptation of the proof gives the result for $f^p$ in the case that there is a completely invariant p-cycle of Fatou components), and so \cite[Theorem~1.2]{MR2801622} remains valid.
\item The proof of \cite[Lemma 3.3]{MR3082543} uses the result of \cite{baker1970}, and this lemma is then used to prove \cite[Theorem 3.1]{MR3082543}. However, it is easy to see how to weaken the statements of \cite[Lemma 3.3 and Theorem 3.1]{MR3082543} in such a way that \cite{baker1970} is no longer needed, but the proof of \cite[Theorem 1.1]{MR3082543}, which is one of the main results of \cite{MR3082543}, still holds.
\item The proof of \cite[Proposition 2.6]{MR3424895} requires \cite[Lemma 11]{eremenkoandlyubich}. Since the functions in \cite{MR3424895} are in the class $\mathcal{S}$, this can be recovered by using using Theorem~\ref{thm:itdoeshold}\ref{inS} instead.
\end{enumerate}
The second list contains those that need to be considered as still open.
\begin{enumerate}[(i)]
\item Bhattacharyya \cite{MR719610}, Hinkkanen \cite{MR1279126} and Fang \cite{MR1604503} each used \cite{baker1970} to prove the analogous result for analytic self-maps of the punctured plane. These results are not cited by any other paper.
\item The result of \cite{baker1970} is used to prove \cite[Lemma 3.1]{MR2011920}, which in turn is used to prove \cite[Theorem 1]{MR2011920}. This paper also uses \cite[Lemma 11]{eremenkoandlyubich} to prove \cite[Lemma 4.5]{MR2011920} and thence \cite[Theorem 2]{MR2011920}. These results are also quoted in \cite{ling}. Note that \cite[Theorem 1]{MR2011920} was generalised to a wider class of functions in \cite[Lemma 3.3]{MR2238646}, and this result does not require any of the open theorems.
\end{enumerate}
Finally we note two implications for survey papers: 
\begin{enumerate}[(i)]
\item The result of \cite{baker1970} is stated as \cite[Theorem 17]{walteriteration}; however, there are no significant implications elsewhere in this paper.
\item The result \cite[Lemma 11]{eremenkoandlyubich} is stated and proved as \cite[Theorem 4.7]{MR1015124}; however, there are no  implications elsewhere in this paper.
\end{enumerate}
%
%
%
%
%

\providecommand{\bysame}{\leavevmode\hbox to3em{\hrulefill}\thinspace}

\providecommand{\href}[2]{#2}

\end{document}